\documentclass[reqno,twoside,11pt]{amsart}

\usepackage{amsmath,amsfonts,calrsfs,fullpage,amssymb,color,verbatim,eucal,yfonts,mathrsfs,marginnote, mathtools}

\newtheorem{Theorem}{Theorem}[section]
\newtheorem{Definition}[Theorem]{Definition}
\newtheorem{Proposition}[Theorem]{Proposition}
\newtheorem{Lemma}[Theorem]{Lemma}

\newtheorem{Remark}[Theorem]{Remark}

\newtheorem{Hypothesis}[Theorem]{Hypothesis}
\makeatletter
\@addtoreset{equation}{section}

\makeatother
 
\setcounter{tocdepth}{3}

\def\R{\mathbb R}
\def\N{\mathbb N}

\def\C{\mathbb C}

\def\eps{\varepsilon}
\def\ds{\displaystyle}

\title{\bf Schauder theorems for a class of (pseudo-)differential operators on finite and infinite dimensional state spaces}\date{}

\author[A. Lunardi]{Alessandra Lunardi}
\address{
Dipartimento di Scienze Matematiche, Fisiche e Informatiche\\
Universit\`a di Parma\\
Parco Area delle Scienze, 53/A\\
43124 Parma, Italy}
\email{alessandra.lunardi@unipr.it}

\author[M. R\"ockner]{Michael R\"ockner}
\address{Fakultat f\"ur Mathematik\\
 Bielefeld Universit\"at, D-33501 Bielefeld, Germany;
 Academy of Mathematics and Systems Science, CAS, Beijing 100190, China 
 }
\email{roeckner@math.uni-bielefeld.de}

\subjclass[2010]{35B65, 35R15, 47D07, 60J35}

\keywords{Maximal H\"older and Zygmund regularity,  generalized Mehler semigroup, Ornstein-Uhlenbeck process with L\'evy noise, fractional Gross Laplacian}

\begin{document}

 \begin{abstract}  
We prove maximal regularity results in H\"older and Zygmund spaces for linear stationary and evolution equations  driven by a   class of differential and pseudo-differential operators $L$, both in finite and in infinite dimension. The assumptions are given in terms of the semigroup generated by $L$. We cover the cases of fractional Laplacians and Ornstein-Uhlenbeck operators with fractional diffusion in finite dimension, and several types of local and nonlocal Ornstein-Uhlenbeck operators, as well as the Gross Laplacian and its   fractional    powers,  in infinite dimension. 
 \end{abstract}

 \maketitle

\section{Introduction}
This paper is devoted to maximal regularity results in H\"older and Zygmund spaces for linear stationary and evolution equations  driven by a     class of differential and pseudo-differential operators $L$, both in finite and in infinite dimension. The underlying space $X$  is  any separable real Banach space, that may be either $\R^N$ or infinite dimensional. 

The operators  $L$ under consideration are the generators of the so called {\em generalized Mehler semigroups}, namely semigroups of operators  in the space $C_b(X)$ of the  continuous and bounded functions from $X$ to $\R$  that may be represented as
 \begin{equation}
 \label{P_t}
 P_tf(x) = \int_{X} f(T_tx + y)\mu_t(dy), \quad t\geq 0, \; f\in C_b(X). 
 \end{equation}
Here $T_t$ is a strongly continuous semigroup of bounded operators  on  $X$, and $\{\mu_t:\; t\geq 0\}$ is a family of Borel probability measures in $X$ such that $\mu_0= \delta_0$ (the Dirac measure at $0\in X$), $t\mapsto \mu_t$ is weakly continuous in $[0, +\infty)$ and 
 \begin{equation}
 \label{semigruppo}
 \mu_{t+s} = (\mu_t \circ T_s^{-1})\ast \mu_s, \quad t, s >0 . 
  \end{equation}
Such a condition is necessary and sufficient for $P_t$ be a semigroup  (namely, $P_{t+s} = P_t \circ P_s$ for $t$, $s\geq 0$), even in the space $\mathcal{B}_b(X)$ of the bounded, Borel measurable  functions  $f:X\rightarrow\R$.

Then for every $f\in C_b(X)$ the function $(t,x)\mapsto P_tf(x)$ is continuous in $[0, +\infty)\times X\mapsto \R$, and this allows to define a closed operator $L$ in $C_b(X)$ through its resolvent,  
 \begin{equation}
 \label{L}
 R(\lambda, L)f (x) = \int_0^{\infty} e^{-\lambda t}P_tf(x)\,dt, \quad \lambda >0, \;f\in C_b(X), \; x\in X. 
  \end{equation}
$L$ is called the {\it generator } of $P_t$, although it is not the infinitesimal generator in the standard sense since $P_t$ is not strongly continuous in $C_b(X)$, in general.
 
Though   this paper's    results and techniques of proof are purely analytic, let us briefly recall the probabilistic framework in which generalized Mehler semigroups occur. In fact, they are the transition semigroups of solution processes to the following type of stochastic differential equations (meant in the weak or mild sense) on $X$:
 \begin{equation}
 \label{SDE}
dX(t)=AX(t)dt + dY(t), \quad  t>0; \; X(0)=x,
\end{equation}
where $A:D(A)\subset X\rightarrow X$ is the infinitesimal generator of $T_t$,  and $Y(t)$, $t\geq0$, is a Levy process in $X$, i.e. a stochastic process in $X$ with cadlag paths starting at 0, defined on a probability space $(\Omega,\mathcal{F},\mathbb{P})$, and having stationary and independent increments. It is characterized by a negative definite function $\lambda:X^*\rightarrow \mathbb{C}$ (where $X^*$ is the dual space of $X$), satisfying
\begin{align}\label{eq:exp}
\int_\Omega  e^{i \xi(Y(t)(\omega)) }\mathbb{P}(d\omega)=\exp(-  t  \lambda(\xi)),\quad \xi\in X^*, \;t>0.
\end{align}
Then   the transition semigroup for the solution $X(t,x)$ of \eqref{eq:exp} (called ``Ornstein-Uhlenbeck process on $X$" in the case that $Y(t)$ is a Wiener process,  and  ``Ornstein-Uhlenbeck process on $X$ with jumps" if $Y(t)$  is a more general Levy process)
 is given by $P_t$ as in \eqref{P_t}, i.e., for f $\in\mathcal{B}_b(X)$, $x\in X$, $t>0$,
\begin{align}
\int_\Omega f(X(t,x)(\omega))\mathbb{P}(d\omega)=P_tf(x),
\end{align} 
where $X(t,x)$, $t\geq0$, denotes the (weak or mild) solution of \eqref{eq:exp} with $X(0,x)=x$ $\mathbb{P}$-a.s.\\
 We  then have an explicit formula for the Fourier transforms of $\mu_t$, $t>0$, in terms of $\lambda$ and $T_t$,  namely
\begin{align}\label{def:mu-t}
\hat\mu_t(\xi):=\int_X \exp(i  \xi(z))\,\mu_t(dz)= \exp \left(-\int_0^t\lambda(T^*_s\xi)\,ds \right),\quad \xi\in X^*,\; t>0,
\end{align}
where $T^*_t$ denotes the dual  semigroup of $T_t$.

There have been a number of papers on generalized Mehler semigroups and their related Ornstein-Uhlenbeck processes with jumps. We refer e.g. to \cite{BRS, ACM, DL, DLScS, FR, LR1, LR2, ZL, OR, ORW, SchSun} and the references therein.\\
Now let us come back to the main results of this paper,  which are purely analytic. 
 What we prove  are maximal H\"older and Zygmund regularity results both for the stationary equation 
\begin{equation}
\label{eq:staz}
\lambda u(x) - Lu(x) = f(x), \quad x\in X, 
\end{equation}
namely for the function $u=R(\lambda, L)f$ defined in \eqref{L}, and for the mild solutions of evolution problems, given by 
\begin{equation}
\label{mild}
v(t,x) = P_tf(x) +\int_0^t P_{t-s}g(s, \cdot)(x)\,ds , \quad 0\leq t\leq T, \;x\in X, 
\end{equation}
with continuous and bounded $f$, $g$. 

Of course, we need some ``regularity" hypothesis on the measures $\mu_t$ in connection with the semigroup $T_t$. 
Specifically, we assume that there exists a Banach space $H\subset X$ such that $  T_t   (H)\subset H$, and such that each $\mu_t$ is Fomin differentiable  along   $T_t(H)$, namely for every $h\in H$, $t>0$ there exists $\beta_{t,h}\in L^1(X, \mu_t)$ such that
\begin{equation}
\label{def:Fomin}
\int_X \frac{\partial f}{\partial (T_th)}(x)\,\mu_t(dx) = - \int_X \beta_{t,h}(x) f(x)\, \mu_t(dx), \quad f\in C^1_b(X). 
\end{equation}
Moreover we assume that  there exist $C>0$, $\omega\in \R$, $\theta >0$ such that 
\begin{equation}
\label{ipotesi_intro}\|T_th\|_H \leq Ce^{\omega t}\|h\|_H, \quad \| \beta_{t,h}\|_{L^1(X, \mu_t)}\leq \frac{Ce^{\omega t}}{t^\theta }\|h\|_H, \quad t>0, \,h\in H. 
\end{equation}

These assumptions are satisfied in several remarkable examples. We consider the following ones. 

\vspace{3mm}

\hspace{3mm} (a) In finite dimension, with $X=H=\R^N$,  they are satisfied by the heat semigroup with  $\theta = 1/2$, by the semigroups generated by the powers $-(-\Delta)^s$ for $s\in (0,1)$, and more generally by Ornstein-Uhlenbeck semigroups with fractional diffusion, 
$$\mathcal Lu (x) =  \frac{1}{2}(\text{Tr}^s(QD^2u) )(x)  - \langle Bx, \nabla u(x)\rangle , \quad x\in \R^N, $$
where $Q$ is any symmetric positive definite matrix, $B$ is any matrix, and   Tr$^s(QD^2)$ is the pseudo-differential operator with symbol $\langle Q\xi, \xi\rangle^s$, $s\in (0, 1)$. The semigroup $T_t$ is now $e^{-tB}$, and the measures $\mu_t$ are given by $\mu_t(dx)= g_t(x)dx$, with  $g_t\in W^{1,1}(\R^N)$, so that $\mu_t$ is Fomin differentiable along all directions, and \eqref{ipotesi_intro} holds with $H=\R^N$ and $\theta = 1/(2s)$. See Sections \ref{subs:Laplacian}, \ref{sect:OU}. 

\vspace{3mm}

\hspace{3mm} (b)  
In infinite dimension they are satisfied by a class of smoothing (strong Feller) Ornstein-Uhlenbeck semigroups, still with $H=X$, that includes the ones 
considered in \cite{DPZ}, 
and by a class of not strong Feller Ornstein-Uhlenbeck semigroups, that includes the classical Ornstein-Uhlenbeck semigroup used in the Malliavin Calculus, and other non symmetric Ornstein-Uhlenbeck semigroups such as in \cite{VN,VNW}; here 
 $H$ is the Cameron-Martin space of a reference Gaussian measure $\mu$. In all these cases the measures $\mu_t$ are Gaussian, and we have $\theta= 1/2$, see Section \ref{sect:OUinf_dim}.   In Section \ref{sect:NLOUO} we consider nonlocal perturbations  of the generator   of a specific strong Feller Ornstein-Uhlenbeck semigroup and show that \eqref{def:Fomin} and \eqref{ipotesi_intro} also hold in such a case, still with $H=X$ and $\theta =1/2$.
Moreover, when $X$ is a Hilbert space endowed with a centered Gaussian measure $\mu$ and $H$ is the Cameron-Martin space of $\mu$, \eqref{def:Fomin} and \eqref{ipotesi_intro} are satisfied by the semigroup generated by the Gross Laplacian $G$, again with $\theta = 1/2$, and by the semigroups generated by  $-(-G)^s$ with $s\in (0, 1)$ and $\theta = 1/(2s)$, in which case the measures $\mu_t$ are mixtures of measures. See Section \ref{sect:Gross}.   In Section \ref{sub:nonGauss} we show that some nonlocal versions of the classical Ornstein-Uhlenbeck semigroup from Malliavin calculus still satisfy our assumptions. 

\vspace{3mm}

 Our techniques are  independent of the dimension of the state space $X$, and the most important and newest part of the paper is in the infinite dimensional case.  Indeed, several familiar tools  in finite dimension, such as 
Calderon-Zygmund theory, Fourier transform, and  the uncountable consequences of local compactness, are not available in infinite dimension, as well as any translation invariant reference measure such as the Lebesgue measure.  

 Needless to say, maximal regularity results are very rare in infinite dimension. A few $L^p$ maximal regularity results, with $p\in (1, +\infty)$,  have been proved  for certain Ornstein-Uhlenbeck  stationary equations; in these cases the solution to \eqref{eq:staz} belongs to a suitable $W^{2,p}$ space with respect to an invariant Gaussian measure $\mu$ whenever $f\in L^p(X, \mu)$. After the pioneering Meyer inequalities for the classical Ornstein-Uhlenbeck operator (\cite{Meyer}, see also \cite[Sect. 5.6]{Boga}), maximal $L^p$ regularity  for a more general class of Ornstein-Uhlenbeck equations was proved  in \cite{ChMG1,ChMG2,MVN}. Concerning non Gaussian measures, the only available results are for $p=2$,   about  (nontrivial) perturbations of certain Ornstein-Uhlenbeck equations (\cite{DPL1,CF}); here $\mu$ is an invariant Gibbs  (= weighted Gaussian) measure. For $p=2$ some of the above results have been extended to the case where the whole $X$ is replaced by a good domain $\mathcal O \subset X$, with generalized Dirichlet or Neumann boundary conditions (\cite{DPL2,DPL3,Cappa,CF1}). 

 Also the literature about maximal H\"older regularity in infinite dimension is very scarce, dealing mainly with Ornstein-Uhlenbeck equations or with equations driven by the Gross Laplacian, see e.g.  \cite{DPZ,C,CL} and the references therein. More details are in  Sections \ref{sect:OUinf_dim}, \ref{sect:Gross}. Moreover, Schauder estimates for some nontrivial perturbations of a specific Ornstein-Uhlenbeck operator in the space $X= C([0,1])$ were  proved in \cite{cdp}.

In our general setting, $P_t$ is smoothing along $H$: for every $f\in C_b(X)$ and $t>0$, $P_tf$ has continuous Gateaux derivatives of any order along $H$, and for every $(h_1, \ldots , h_n)\in H^n$ we have
\begin{equation}
\label{stimaderivate_intro}
\left| \frac{\partial^n P_tf}{ \partial h_1  \ldots  \partial h_n}(x)\right| \leq C_n\left(1 + \frac{1}{t^{n\theta}} \right) \prod_{j=1}^n\|h_j\|_H \|f\|_{\infty}, \quad t>0, \; x\in X. 
\end{equation}
On the other hand, in general $P_tf$ is not Gateaux differentiable along other subspaces than $H$. Therefore, any regularity result is expressed in terms of regularity along $H$. The H\"older spaces that we use are in fact defined by 
$$ C^{\alpha}_H(X) = \left\{ f\in C_b(X): \; [f]_{C^{\alpha}_H(X)} := \sup_{x\in X, \,h\in H\setminus \{0\}} \frac{|f(x-h)- f(x)|}{\|h\|_H^{\alpha}} <+\infty \right\}, 
$$
$$ \|f\|_{C^{\alpha}_H(X)} =   \|f\|_{\infty} +  [f]_{C^{\alpha}_H(X)} , $$
for $\alpha \in (0,1)$. In the case $H=X$ this is the usual space of bounded and $\alpha$-H\"older continuous functions from $X$ to $\R$. 

The Schauder type regularity results for \eqref{eq:staz} are the following, 

\vspace{3mm}

 (i)  If $1/\theta \notin \N$,  for every $\lambda >0$ and $f\in C_b(X)$ 
the solution $u$  to \eqref{eq:staz} belongs to $C^{ 1/\theta}_H(X)$, and there is $C(\lambda)$ independent of $f$ such that $\|u\|_{C^{ 1/\theta}_H(X)} \leq C(\lambda)\|f\|_{\infty}$. 

 (ii) If $\alpha\in (0, 1)$ and  $\alpha + 1/\theta \notin \N$, for every $\lambda >0$ and $f\in C^{\alpha}_H(X)$ 
the solution $u$ to \eqref{eq:staz} belongs to $C^{\alpha + 1/\theta}_H(X)$ and  there is $C(\lambda, \alpha)$ independent of $f$ such that $\|u\|_{C^{\alpha+ 1/\theta}_H(X)} \leq C(\lambda, \alpha)\|f\|_{C^{\alpha}_H(X)}$. 

\vspace{3mm}
Here, for $\sigma\in (0,1)$ and $k\in \N$, $C^{\sigma + k}_H(X)$ denotes  the space of  all  continuous and bounded functions from $X$ to $\R$ that possess continuous and bounded Gateaux derivatives of any order $\leq k$ along $H$, and such that all the $k$-th order derivatives belong to $C^{\sigma}_H(X)$, endowed with its natural norm. If $H=X$ this is the space of the $k$ times Gateaux differentiable functions with continuous and bounded Gateaux derivatives of any order $\leq k$, and such that all the $k$-th order derivatives are $\alpha$-H\"older continuous. 

The exponents $1/\theta$ in (i), and $\alpha + 1/\theta$ in (ii) are optimal, in the sense that  they cannot be replaced by $1/\theta +\varepsilon$, $\alpha + 1/\theta + \varepsilon$ respectively,  for any $\varepsilon >0$. 

In the critical cases  $\alpha + 1/\theta = k\in \N$ (with $\alpha =0$ in statement (i), $\alpha\in (0,1)$ in statement (ii)) we do not expect that the solution to \eqref{eq:staz} has bounded partial derivatives of order $k$, in general. The simplest counterexample is given by the Laplacian in finite dimension. The heat semigroup in $\R^N$ has the representation \eqref{P_t} with $T_t=I$, $\mu_t(dx) = (4\pi t)^{-N/2}\exp(-|x|^2/4t)dx$, so that it
satisfies \eqref{def:Fomin} and \eqref{ipotesi_intro} with $X=H=\R^N$ and $\theta = 1/2$; however  it is well known that for every $\lambda >0$ the solution to 
$\lambda u - \Delta u = f$ with $f\in C_b(\R^N)$ is not twice continuously differentiable and its first order derivatives are not Lipschitz continuous in general, if $N>1$. They belong to the Zygmund space $Z^1(\R^N)$, namely they satisfy $|D_ku(x+2h) -2D_ku(x+h) + D_ku(x)| \leq C|h|$ for every $k= 1, \ldots, N$, $x$, $h\in \R^N$, with $C$ independent of $x$ and $h$. We extend this result to our general setting, introducing the Zygmund spaces $Z^n_H(X)$ for $n\in \N$ and showing that in the above critical cases the solution to \eqref{eq:staz} belongs to $Z^k_H(X)$. 

Similar results are proved for the mild solutions to Cauchy problems with continuous and bounded data, 
\begin{equation}
\label{Cauchy_intro}
\left\{ \begin{array}{l}
v_t(t,x)  = Lv(t,\cdot)(x)  + g(t,x), \quad t\in [0,T], \; x\in X, 
\\
\\
v(0, \cdot) = f, \end{array}\right. 
\end{equation}
namely for the functions $v$ defined by 
$$v(t,x) = P_tf(x) + \int_0^t P_{t-s}g(s, \cdot)(x)\,ds , \quad  t\in [0,T], \; x\in X, $$
with $f\in C_b(X)$, $g\in C_b([0,T]\times X)$. Our assumptions  are not strong enough to guarantee that $v$ is differentiable with respect to $t$, so it is just a mild solution and not a classical one. Moreover, time-space Schauder estimates such as the standard ones for the heat equation
are not available in general. For instance, they are not available when $L$ is  the classical one-dimensional Ornstein-Uhlenbeck operator $Lu(x)= u''(x) - xu'(x)$, as a consequence of \cite{DPL}. 
So, our Schauder and Zygmund regularity results concern only space regularity.  
More precisely, we introduce the space $C^{0,\alpha +k}_H([0,T]\times X)$ for $\alpha\in (0, 1)$, $k\in \N \cup \{0\}$, consisting of the bounded continuous functions $g:[0,T]\times X\mapsto \R$ such that $g(t, \cdot)\in C^{\alpha +k}_H(X)$ for every $t\in [0,T]$,  $\|g\|_{C^{0, \alpha +k}_H([0,T]\times X)} :=
\sup_{t\in [0,T]} \|g(t, \cdot)\|_{C^{\alpha +k}_H(X)} <+\infty$, and if $k \neq 0$, all the Gateaux derivatives $\partial^j g/ \partial h_1 \ldots \partial h_j $ with $j\leq k$ and $h_1, \ldots , h_j \in H$ are continuous in $[0,T]\times X$. We prove that 

\vspace{3mm}

 (i)  If $1/\theta \notin \N$,  for every  $f\in C^{1/\theta}_H(X)$ and $g\in C_b([0,T]\times X)$, 
the function $v$ 
belongs to $C^{0, 1/\theta}_H([0,T]\times X)$, and there is $C(T)$ independent of $f$ and $g$  such that $\|v\|_{C^{ 0, 1/\theta}_H([0,T]\times X)} \leq $ $C(T)$ $(\|f\|_{C^{1/\theta}_H(X)} + \|g\|_{\infty} )$. 

 (ii) If $\alpha\in (0, 1)$ and  $\alpha + 1/\theta \notin \N$, for every   $f\in C^{\alpha + 1/\theta}_H(X)$ and $g\in C^{0,\alpha }_H([0,T]\times X)$
the function $v$  belongs to $C^{0, \alpha + 1/\theta}_H([0,T]\times X)$ and  there is $C(T, \alpha)$ independent of $f$ and $g$ such that $\|v\|_{C^{0, \alpha+ 1/\theta}_H([0,T]\times X)} \leq C(T, \alpha)( \|f\|_{C^{\alpha +1/\theta}_H(X)} + \|g\|_{ C^{0,\alpha }_H([0,T]\times X)} )$. 

\vspace{3mm}

In the critical cases $\alpha + 1/\theta \in \N$ we obtain Zygmund space regularity results, as in the stationary case. 

The proofs rely on estimates \eqref{stimaderivate_intro} and on the better estimates for $\alpha\in (0, 1)$, 
\begin{equation}
\label{stimaderivateHolder_intro}
\left| \frac{\partial^n P_tf}{ \partial h_1  \ldots  \partial h_n}(x)\right| \leq C_{n, \alpha}\left(1 + \frac{1}{t^{(n-\alpha)\theta}} \right) \prod_{j=1}^n\|h_j\|_H \|f\|_{C^{\alpha}_H(X)}, \quad t>0, \; x\in X, \; f\in C^{\alpha  }_H(X), 
\end{equation}
through a procedure  that employs interpolation techniques such as in the recent paper \cite{CL} where the classical Ornstein-Uhlenbeck operator in infinite dimension is considered. 

Both in the stationary and in the evolution case the general results are applied to the above mentioned examples, and yield old and new maximal regularity results. Comparisons with the literature dealing with H\"older and Zygmund maximal regularity   for such examples    are given in Sections \ref{subs:Laplacian}, \ref{sect:OU}, \ref{sect:OUinf_dim},  \ref{sect:Gross}.   To give a complete account on all the contributions to Schauder theory in finite dimension is beyond the scope of this paper.

 Finally, we would like to explain the motivation to study Schauder estimates
for this class of partial (pseudo-) differential equations in infinitely many variables. 
The interest in such PDEs has risen enormously in recent years, because they occur 
as forward and backward Kolmogorov equations for stochastic PDEs, an area that 
has become one of the major directions of research in probability theory and, in 
particular, in stochastic analysis. See e.g.  \cite{DPZ, C, DPBirkh, BKRS, BDPRS} and the references therein. Solving any of the two provides a way to obtain the 
time marginal laws of the solution to the SPDE in a purely analytic way, without having to 
solve the SPDE itself. In many important cases the time marginal laws determine 
the solutions to the SPDE completely and the latter can be reconstructed from the former.
Hence to understand such PDEs in infinite dimensions becomes important and regularity  
results for their solutions mandatory. In particular, because of the already mentioned lack of a Lebesgue 
measure on infinite dimensional spaces and since high order Sobolev spaces are not embedded in spaces of continuous functions (even for Gaussian measures), Schauder regularity  
appears to be more feasible here. 

In the evolutionary case the class of PDEs considered 
in this paper are just (after time reversal) the Kolmogorov backward equations corresponding to SPDEs of
type \eqref{SDE}, whose solutions are infinite dimensional Ornstein-Uhlenbeck processes with 
L\'evy noise, as explained above. These have been used as model cases in many other 
aspects and are used in our paper as a model case to understand Schauder theory in 
infinite dimension. It turns out that already in this case new and typical infinite dimensional 
phenomena occur, as e.g. regularity of solutions can only hold along subspaces, 
which are intrinsically linked to the type of noise in equation  \eqref{SDE} and its relation to the 
possibly unbounded operator $A $ in its drift. This is expressed in pure analytical language through the differentiability properties of the measures $\mu_t$ in relation to the semigroup generated by $A$; see conditions \eqref{def:Fomin} and \eqref{ipotesi_intro} above. Clearly, if one does not fully understand 
such phenomena in this model case, one has no chance to develop a 
Schauder theory for more general Kolmogorov equations in infinite dimension.  The next step 
would then be to look at perturbations of the situation studied in this paper, for example by adding a first order part to the Ornstein-Uhlenbeck type Kolmogorov operators considered here, which is given by a nonlinear vector field in the Banach space $X$. This is the object of a paper already in 
preparation by the two authors. A further step would then be to perturb the higher order part in a
``geometrically comparable" way, similarly to  what is usually done in finite dimension, in going from the Laplacian to  strictly elliptic operators.   

 The structure of this paper is as follows. In Section \ref{sect:notation} we mainly fix notations. In Section \ref{sect:schauzyg} we introduce our hypotheses, state and prove our main results described above. In particular, we prove an explicit formula for the $n$-th G\^{a}teaux derivative of $P_tg$ for $g\in C_b(X)$ in Proposition \ref{representation}. Sections \ref{sect:finite} and \ref{sect:infinite} are devoted to examples in finite and infinite dimensions respectively.

\section{Notation and preliminaries}
\label{sect:notation}
 
Below, $X$, $Y$ are Banach spaces. If we write $X\subset Y$ this means that $X$ is contained in $Y$ with continuous embedding. By $\mathcal L(X)$,  $\mathcal L(X, Y)$ we denote the spaces of the linear bounded operators from $X$ to $X$, from $X$ to $Y$, respectively. 

Let $\mathcal B_b(X; Y)$ and $C_b(X; Y)$ denote the space of all bounded Borel measurable (resp. bounded  continuous)   functions $F:X\mapsto Y$, endowed with the sup norm $\|F \|_{\infty}:= \sup_{x\in X} \|F(x)\|_Y$. 
If $X=\R$ we set $\mathcal B_b(X; \R) = \mathcal B_b(X)$ and  $C_b(X; \R):= C_b(X)$. 

We use the standard notation for partial derivatives along elements of $X$: 
for any fixed $v$, $x\in X$ and $F:X\mapsto Y$, we say that $F$ is differentiable along $v$ at $x$ if there exists the limit
$\lim_{t\to 0} ( F(x+  t    v) - F(x))/t$. In this case the limit is denoted by $\partial   F    (x)/\partial v$.

In this paper we shall  consider spaces of functions that enjoy regularity properties only along certain directions. They are defined as follows. 

Let  $H\subset X$ be a Banach space. If $F:X\mapsto Y$ is differentiable along every $h\in H$, and the mapping  $h\mapsto \partial F/\partial h(x)$ belongs to  $\mathcal L(H, Y)$, $F$ is called $H$-Gateaux differentiable at $x$. Such a mapping is called $H$-Gateaux derivative of $  F    $ at $x$, and denoted by $D_HF(x)$.  
If in addition  
$$\lim_{\|h\|_H\to 0} \frac{\|F(x+h) - F(x) - D_HF(x)(h)\|_Y}{\|h\|_H} =0, $$
$F$ is called $H$-Fr\'echet differentiable at $x$.

Note that in  the case $H=X$, these are the usual notions of Gateaux and Fr\'echet differentiable functions at $x$.  

We shall consider also higher order derivatives. We identify ${\mathcal L}(H, {\mathcal L}(H,\R) )$ 
 with the space of the bilinear continuous functions from $H^2$ to $\R$;  more generally,  denoting by ${\mathcal L}^{n}(H)$ the space of all $n$-linear continuous mappings from $H^n$ to $\R$,  we identify ${\mathcal L}(H, {\mathcal L}^{n-1}(H))$ with ${\mathcal L}^{n}(H)$.

 Let $f:X\mapsto \R$ be  $H$-Gateaux (resp. $H$-Fr\'echet) differentiable at every $x\in X$.  If the mapping $D_Hf:X\mapsto {\mathcal L}(H,\R) $ is in  turn $H$-Gateaux (resp. $H$-Fr\'echet) differentiable at $x_0$, its $H$-Gateaux (resp. $H$-Fr\'echet) derivative belongs to ${\mathcal L}(H, {\mathcal L}(H,\R) )= {\mathcal L}^{2}(H)$ and is  denoted by $D^2_Hf(x_0)$. 
For  $n\in \N$, $n\geq 2$, $n$ times $H$-Gateaux (resp. $H$-Fr\'echet) differentiable functions are defined by recurrence. If $f:X\mapsto \R$ is $n-1$ times $H$-Gateaux (resp. $H$-Fr\'echet) differentiable at every $x\in X$, and the mapping $D^{n-1}_Hf:X\mapsto {\mathcal L}^{n-1}(H)$ is 
$H$-Gateaux (resp. $H$-Fr\'echet) differentiable at $x_0$, we say that $f$ is $n$ times $H$-Gateaux (resp. $H$-Fr\'echet) differentiable at $x_0$, and the derivative of $D^{n-1}_Hf$ at $x_0$ is denoted by $D^{n}_Hf(x_0)$. 

The space $C^n_H(X)$ consists of all continuous and bounded functions $f:X\mapsto \R$ having  $H$-Gateaux derivatives up to the order $n$,  such that for every $k=1, \ldots n$ and for every  $(h_1, \ldots, h_k)\in H^k$, the mapping $X\mapsto \R$, $x\mapsto D^k_Hf(x)(h_1, \ldots, h_k)$ is continuous and bounded.  It is endowed with the norm
$$\|f\|_{C^n_H(X)} = \|f\|_{\infty} + \sum_{k=1}^n \sup_{x\in X}\|D^k_Hf(x)\|_{ {\mathcal L}^{k}(H)}, $$
where, for every $n$-linear continuous function $T:H^k\mapsto \R$, 
$$\|T\|_{ {\mathcal L}^{k}(H)} := \sup\bigg\{ \frac{|T(h_1, \ldots, h_k)|}{\|h_1\|_H\cdots \|h_k\|_H}: \; h_i\in H\setminus \{0\} \bigg\}. $$
We note that by the multilinear version of the uniform boundedness principle (see \cite{SB}, \cite{B}) we have that if $f\in C^n_H(X)$, then indeed $\|f\|_{C^n_H(X)}<\infty$. Furthermore, we remark that if $f\in C^n_H(X)$, for every $x\in X$ and $h\in H$ the function $t\mapsto f(x+th)$ is in $C^n(\R)$, and we have
\begin{equation}
\label{teofondcalc}
\begin{array}{lll}
(i)  & \text{if}\;n=1, & f(x+h) - f(x) = \ds \int_0^1 D_Hf(x+\sigma h)(h)  \, d\sigma    , \;     \text{and therefore}
\\
\\
& &    |f(x+h) - f(x)| \leq  \sup_{y\in X}\|D_Hf(y)\|_{ H^*}\|h\|_H;   
\\
\\
(ii)&  \text{if}\;n>1, & (D^{n-1}_{  H   }f(x+h)- D^{n-1}_Hf(x))(h_1, \ldots, h_{n-1})  =  \ds\int_0^1 D^{n}_Hf(x+\sigma h)(h_1, \ldots, h_{n-1}, h)d\sigma. 
\end{array}
\end{equation}

For $\alpha\in (0, 1)$ we set
$$C^{\alpha}_H(X; Y) := \bigg\{F\in C_b(X; Y):  \, [F]_{C^{\alpha}_H(X, Y)} := \sup_{x\in X, \,h\in H\setminus\{0\}}\frac{\|F(x+h) - F(x)\|_Y}{\|h\|_H^{\alpha }} <+\infty \bigg\} $$
and we endow $C^{\alpha}_H(X; Y)$ with the norm
$$\|F\|_{C^{\alpha}_H(X; Y)} :=  \|F\|_{\infty} + [F]_{C^{\alpha}_H(X, Y)}$$
For $\alpha =1$, instead of Lipschitz continuity we shall consider a weaker condition, called Zygmund continuity. We set
$$\begin{array}{l}
Z^1_H(X, Y) := 
\\
\\
\ds \bigg\{F\in C_b(X; Y):  \, [F]_{Z^1_H(X, Y)} := \sup_{x\in X, \,h\in H\setminus \{0\}}\frac{\| F(x+2h)-2F(x+h) + F(x)\|_Y}{\|h\|_H} <+\infty \bigg\} \end{array}$$
and we endow $Z^1_H(X; Y)$ with the norm
$$\|F\|_{Z^1_H(X, Y)} :=  \sup_{x\in X}\|F(x)\|_Y + [F]_{Z^1_H(X, Y)} .  $$
If $H=X$ we drop the subindex $H$ and we use the more standard notations $C^{n}_b(X, Y)$ for $n\in \N$, $C^{\alpha}_b(X, Y)$ for $\alpha\in (0,1) $, $Z^1_b(X, Y)$.

If $Y=\R$, we set $C^{n}_H(X; \R) =: C^{n}_H(X)$ for $n\in \N$, $C^{\alpha}_H(X; \R) =: C^{\alpha}_H(X)$ for $\alpha\in (0,1) $, $Z^1_H(X; \R) =: Z^1_H(X)$. Higher order H\"older and Zygmund spaces of real valued functions will also be used; they are defined in a natural way, as follows. 
 
For $\alpha\in (0, 1)$ and $n\in \N$ we set
\begin{equation}
\label{def:higherHolder}
\begin{array}{c}
C^{\alpha +n}_H(X) := \{ f\in C^n_H(X): \: D^n_Hf \in C^{\alpha}_H(X, \mathcal L^n(H))\}, 
\\
\\
\|f\|_{C^{\alpha +n}_H(X)} := \|f\|_{C^{n}_H(X)} + [D^n_Hf]_{C^{\alpha}_H(X, \mathcal L^n(H))}
\end{array}
\end{equation}
and  for $n\in \N$, $n\geq 2$, 
\begin{equation}
\label{def:higherZygmund}
\begin{array}{c}
Z^{n}_H(X) := \{ f\in C^{n-1}_H(X): \: D^{n-1}_Hf \in Z^{1}_H(X, \mathcal L^{n-1}(H))\}, 
\\
\\
\|f\|_{Z^{ n}_H(X)} := \|f\|_{C^{n-1}_H(X):} + [D^{n-1}_Hf]_{Z^{1}_H(X, \mathcal L^{n-1}(H))}
\end{array}
\end{equation}

In the next lemma we collect some  properties of the above defined spaces, that are easy extensions of known properties in the case $H=X$, and that will be used later.

\begin{Lemma}
\label{Le:H-Holder}
Let $X$, $Y$ be Banach spaces.
\begin{itemize}
\item[(i)] For every $\alpha \in (0,1)$ and $F\in C^{1}_H(X; Y) $ we have
\begin{equation}
\label{Jalpha}
[F]_{C^{\alpha}_H(X; Y)} \leq 2^{1-\alpha }   \sup_{z\in X} \|D_HF(z)\|_{ \mathcal L(H,Y)}  ^{\alpha} \|F\|_{C_b(X;Y)}^{1-\alpha}. 
\end{equation}
\item[(ii)] If $F:X\mapsto Y$ is $H$-Gateaux differentiable and $D_HF$ is continuous at $x\in X$, then $F$ is $H$-Fr\'echet differentiable at $x$. 
\item[(iii)] If  $f\in C^2_H(X)$ we have 
 \begin{equation}
 \label{Zygmund2}
|f(x+2h) -2f(x+h)  +f(x)| \leq \sup_{y\in X} \|D^2_Hf(y)\|_{\mathcal L^2(H)} \|h\|^2_H, \quad  x\in X, \; h\in H. 
 \end{equation}
\end{itemize}
\end{Lemma}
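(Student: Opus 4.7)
My plan is to prove the three parts independently; each reduces to a one-dimensional calculus argument along the segment $t \mapsto x + th$.

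For (i), the plan is a standard interpolation/splitting argument. Abbreviate $M_0 := \|F\|_{C_b(X;Y)}$ and $M_1 := \sup_z \|D_HF(z)\|_{\mathcal L(H,Y)}$; both may be assumed finite and positive (otherwise the inequality is trivial). The identity of (2.1)(i), applied via Hahn--Banach to $\phi \circ F$ for $\phi \in Y^*$, gives $\|F(x+h)-F(x)\|_Y \leq M_1 \|h\|_H$, while the triangle inequality gives $\|F(x+h)-F(x)\|_Y \leq 2 M_0$. Then I split according to whether $\|h\|_H \leq 2 M_0/M_1$ or not: in the small-$h$ regime the derivative bound dominates and in the large-$h$ regime the sup-norm bound does, and in both cases the ratio $\|F(x+h)-F(x)\|_Y / \|h\|_H^\alpha$ is bounded by $2^{1-\alpha} M_0^{1-\alpha} M_1^\alpha$.

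For (ii), I would apply a mean value inequality to the $H$-Gateaux differentiable map $G(\sigma) := F(x + \sigma h) - \sigma D_HF(x)(h)$, whose derivative is $G'(\sigma) = (D_HF(x+\sigma h) - D_HF(x))(h)$. This yields
\begin{equation*}
\|F(x+h) - F(x) - D_HF(x)(h)\|_Y \leq \|h\|_H \sup_{\sigma \in [0,1]} \|D_HF(x+\sigma h) - D_HF(x)\|_{\mathcal L(H,Y)},
\end{equation*}
and the continuity of $D_HF$ at $x$ forces the right-hand side to be $o(\|h\|_H)$ as $\|h\|_H \to 0$, giving $H$-Fréchet differentiability. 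The mild subtlety here is that the continuity of $D_HF$ is only assumed pointwise at $x$, not on a neighborhood, so one must use a version of the mean value inequality requiring only differentiability rather than $C^1$ regularity; the standard Hahn--Banach reduction to scalar-valued functions handles this cleanly, and this is the one place where I expect the argument to need a bit of care.

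For (iii), I restrict $f$ to the line, setting $g(t) := f(x+th)$. By hypothesis $g \in C^2(\R)$ with $g'(t) = D_Hf(x+th)(h)$ and $g''(t) = D^2_Hf(x+th)(h,h)$. The identity
\begin{equation*}
g(2) - 2g(1) + g(0) = \int_0^1 \bigl(g'(t+1) - g'(t)\bigr)\,dt = \int_0^1 \!\int_0^1 g''(s+t)\,ds\,dt,
\end{equation*}
together with the pointwise bound $|g''(s+t)| \leq \sup_y \|D^2_Hf(y)\|_{\mathcal L^2(H)} \|h\|_H^2$ and the fact that the domain of integration has area one, immediately yields the desired Zygmund-type estimate.
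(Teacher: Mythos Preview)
Your proposal is correct and follows essentially the same route as the paper. The only cosmetic difference is in (i): the paper bypasses your case split by directly multiplying the two bounds, writing $\|F(x+h)-F(x)\|_Y = \|F(x+h)-F(x)\|_Y^{\alpha}\|F(x+h)-F(x)\|_Y^{1-\alpha} \leq (M_1\|h\|_H)^{\alpha}(2M_0)^{1-\alpha}$, which yields the same constant in one line; parts (ii) and (iii) match the paper's argument almost verbatim (the paper uses the integral form of the mean value theorem rather than the inequality form, but the estimates are identical).
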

\begin{proof}
Let  $F\in  C^{1}_H(X; Y)$. For every $x\in X$, $h\in H$, the function $\psi: \R\mapsto Y$,  $\psi(t) := F(x+th)$ is continuously differentiable, and $\psi'(t) = D_HF(x+th)(h)$. 
Therefore we have
\begin{equation}
\label{fond_calcolo}
F(x+h) - F(x) = \int_0^1 D_HF(x+ \sigma h)(h)d\sigma
\end{equation}
so that 
\begin{equation}\label{Zygmund3}
\|F(x+h) - F(x)\|_Y   \leq \sup_{z\in X}\|D_HF(z)\|_{\mathcal L(H, Y)}\|h\|_H. 
\end{equation}
Of course, we also have 
$$\|F(x+h) - F(x)\|_Y \leq 2 \sup_{z\in X}\| F(z)\|_{Y}. $$
Consequently, 
$$\|F(x+h) - F(x)\|_Y \leq (\sup_{z\in X}\|D_HF(z)\|_{\mathcal L(H, Y)}\|h\|_H)^{\alpha}  (2 \sup_{z\in X}\| F(z)\|_{Y})^{1-\alpha}$$
and statement (i) follows. 

Let us prove (ii). Using again \eqref{fond_calcolo} we get, for every $h\in H$, 
$$\begin{array}{lll}\|F(x+h) - F(x) - D_HF(x)(h)\|_Y &  = & \ds   \bigg\| \int_0^1 (D_HF(x+ \sigma h) -  D_HF(x))(h)d\sigma \bigg\|_Y
\\
\\
& \leq &  \sup_{v\in H, \, \|v\|_H \leq \|h\|_H } \|D_HF(x+v)-D_HF(x)\|_{\mathcal L(X, Y)}\|h\|_H 
\end{array}$$
so that, recalling that $H\subset X$, 
$\|F(x+h) - F(x) - D_HF(x)(h)\|_Y = o(\|h\|_H)$ as $\|h\|_H\to 0$, and $F$ is  $H$-Fr\'echet differentiable at $x$. 

Let now $f\in C^2_H(X)$. 
Applying thrice \eqref{fond_calcolo}, for every $x\in X$ and $h\in H$ we get 
$$\begin{array}{l}
f(x+2h) -2f(x+h)  +f(x) = \ds \int_0^1 (D_Hf(x+(1+\sigma) h)-D_Hf(x+ \sigma h))(h)d\sigma 
\\
\\
\ds = \int_0^1\int_0^1 D^2_Hf(x+(\tau + \sigma)h)(h,h)d\tau \,d\sigma\end{array}$$
and estimating in an obvious way statement (iii) follows.  
\end{proof}

A Borel probability measure $\mu$  in $X$ is called {\em Fomin differentiable} along $v\in X$ if for every Borel set $A$ the incremental ratio 
$ (\mu (A+tv) - \mu(A))/t  $ has finite limit as $t\to 0$. Such a limit is called $d_v\mu(A)$; $d_v\mu$ is a signed measure and denoting the translated measure $\mu_v(A):= \mu(A+v)$ \ by $\mu_v$  we have  
\begin{equation}
\label{misura_limite}
\lim_{s\to 0}\left\| \frac{\mu_{sv}- \mu}{s} - d_v\mu\right\| =0, 
\end{equation}
where $\|\cdot\|$ denotes the total variation norm. 

Moreover, $d_v\mu$ is absolutely continuous with respect to $\mu$. The density $\beta^{\mu}_v\in L^1(X, \mu)$ is called {\em Fomin derivative} or {\em logarithmic derivative} of $\mu$ along $v$, and it satisfies
\begin{equation}
\label{Fomin}
\int_X \frac{\partial f}{\partial v}\,\mu(dx) = - \int_X \beta^{\mu}_vf\,\mu(dx), \quad f\in C^1_b(X). 
\end{equation}
 By \cite[Thm. 3.6.8]{BogaDiff}, this equality characterizes  Fomin differentiability, in the sense that if \eqref{Fomin} holds for some $\beta^{\mu}_v\in L^1 (X, \mu)$ and for every $f\in C^1_b(X)$, then $\mu$  is Fomin differentiable along $v$.

If $\mu$ is Fomin differentiable along two 
vectors $v$, $w$, then it is Fomin differentiable along any linear combination of $v$ and $w$, and we have $d_{\lambda_1 v + \lambda_2w}\mu = \lambda_1 d_v\mu + \lambda_2 d_w\mu$; therefore
\begin{equation}
\label{somma}
 \beta^{\mu}_{\lambda_1 v + \lambda_2w}= \lambda_1 \beta^{\mu}_{v} + \lambda_2  \beta^{\mu}_{w}, \quad \lambda_1, \;\lambda_2\in \R .
\end{equation}

The proofs of these statements may be found in  \cite[Chapter 3]{BogaDiff}. We refer to  \cite{BogaDiff} for the general theory of differentiable measures    $\mu$, and to the basic properties of the measures $d_v\mu$.     
 
 
 \section{Schauder and Zygmund regularity}\label{sect:schauzyg}
 

Under the only assumptions that $T_t\in {\mathcal L}(X)$ and $\mu_t$ is a Borel probability measure for every $t$, the operators $P_t$ defined in \eqref{P_t} map $C_b(X)$ into itself and we have
\begin{equation}
\label{contrazione}
\|P_tf\|_{\infty} \leq \|f\|_{\infty}, \quad t>0, \; f\in C_b(X). 
\end{equation}
The weak continuity of $t\mapsto \mu_t$ yields that for every $ f\in C_b(X)$ the function $ [0, +\infty)\times X$, $(t,x)\mapsto P_tf(x)$ is continuous, by \cite[Lemma 2.1]{BRS}. 
Consequently, the operators $F_\lambda$ in right-hand side of \eqref{L}  are one to one, and since $P_t$ is a semigroup they satisfy the resolvent identity $F(\lambda ) - F(\mu) = (\mu - \lambda)F(\lambda) F(\mu)$. By the general spectral theory, there exists a unique closed operator $L:D(L) \subset  C_b(X)\mapsto  C_b(X)$ such that $F(\lambda) = R(\lambda, L)$. The domain $D(L)$ is just the range of $F(\lambda )$, for every $\lambda >0$.  

The leading assumptions of the paper are the following. 
 
\begin{Hypothesis}
\label{Hyp1}
 For every $t>0$ there exists a subspace $\{0\} \neq H_t\subset X$ such that $\mu_t$ is Fomin differentiable along every $h\in H_t$. 
 \end{Hypothesis}
 
According to the notation of Section \ref{sect:notation}, for every $v\in H_t$ we denote by $\beta^{\mu_t}_{v}$ the Fomin derivative of $\mu_t$ along $v$.

\begin{Hypothesis}
\label{Hyp2}
There exists a Banach space $H \subset X$, and constants $M$, $C$, $\theta>0$, $\omega \in \R$ such that 
 \begin{equation}
 \label{fundamental}
\left\{\begin{array}{ll} 
(i) &   T_t (H)\subset H \; ,  \;\|T_th\|_H \leq Me^{\omega t}\|h\|_H, \quad t>0, \;h\in H, 
\\
\\
(ii) &  T_t(H)\subset H_t,\;  \|\beta^{\mu_t}_{T_th}\|_{L^1(X, \mu_t)} \leq \ds \frac{Ce^{\omega t}}{t^{\theta}}\|h\|_H, \quad t>0, \; h\in H. 
\end{array}\right. 
\end{equation}
\end{Hypothesis}

\subsection{Properties of $P_t$ and estimates}

The starting point of our analysis is the next proposition, which shows that each $P_t$ is  smoothing along suitable directions. 

\begin{Proposition}
\label{representation}
\begin{itemize}
\item[(i)] Let $g\in \mathcal{B}_b(X)$ and $t>0$. Then $P_tg$ is $H$-Gateaux differentiable with bounded H-Gateaux derivative, and 
\begin{equation}
\label{hderivative0}
D_H P_tg(x)(h) = - \int_X g(T_tx + y)\beta ^{\mu_t}_{T_th}(y)\,\mu_t(dy), \quad t>0, \; h\in H. 
\end{equation}
\item[(ii)]
 Let $g\in\ C_b(X)$ and $t>0$, $n\in\N$. Then $P_tg\in C^n_H(X)$; for all $h_1,...,h_n\in H$ we have
\begin{align}\label{ident1}
&D^n_HP_tg(x)(h_1,...,h_n)\\ \notag
&=(-1)^n\int_X\cdot\cdot\cdot\int_X g(T_tx+T_{\frac{n-1}{n}t}y_1+\cdot\cdot\cdot+T_{\frac t n}y_{n-1}+y_n)\\
&\quad\quad\quad\quad\beta^{\mu_{t/n}}_{T_{t/n}h_n}(y_n)\cdot\cdot\cdot\beta^{\mu_{t/n}}_{T_{t/n}h_1}(y_1)\mu_{t/n}(dy_n)\cdot\cdot\cdot\mu_{t/n}(dy_1), \notag
\end{align}
 and 
\begin{equation}\label{nth-derivatives}
\|D_H^nP_tg(x)\|_{\mathcal{L}^n(H)}\leq K_n\frac{e^{\omega t}}{t^{n\theta}}\|g\|_\infty , 
\end{equation}
with $K_n:= C^nn^\theta$. 
If $\omega >0$  a better  estimate than \eqref{nth-derivatives} holds for large $t$, namely there exists  $K_n' >0$ such that 
\begin{equation}
\label{nth-derivatives1}
\|D^n_HP_tg(x)\|_{\mathcal L^n(H)} \leq K_n' \max\{ 1,  t^{-n\theta} \} \|g\|_{\infty}, \quad t>0, \; x\in X, \; g\in C_b(X). 
\end{equation}
\item[(iii)]
Let  $g\in C^1_H(X)$ and $t>0$. Then
\begin{equation}
\label{hderivative1}
D_HP_tg(x)(h) = \int_X \frac{\partial g}{\partial (T_th)}(T_tx + y) \mu_t(dy) = P_t\bigg( \frac{\partial g}{\partial (T_th)}\bigg) (x), \quad t>0, \; x\in X, \;h\in H. 
\end{equation}
If even $g\in C^n_H(X)$ for some $n\in \N$, then for all $t>0$, $x\in X$ and $h_j\in H$, $j=1, \ldots, n$, we have 
\begin{equation}
\label{hderivativen}
D_H^nP_tg(x)(h_1, \ldots, h_n) =  \int_X D^n_Hg(T_tx+y)(T_th_1, \ldots, T_th_n)\mu_t(dy)
=P_t\bigg( \frac{\partial ^ng}{\partial (T_th_n) \ldots \partial (T_th_1)}\bigg) (x), 
\end{equation}
and the function $(t,x)\mapsto D_H^nP_tg(x)(h_1, \ldots, h_n)$ is continuous in $[0, +\infty)\times X$. Moreover, 
\begin{equation}
\label{kappa-kappa}
\|P_t\|_{\mathcal L (C^n_H(X))}\leq \max\{1,  M^ne^{n\omega t} \}, \quad t>0, \; n\in \N. 
\end{equation}
\end{itemize}
\end{Proposition}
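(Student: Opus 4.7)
The plan is to establish (i) by a direct difference-quotient computation based on the Fomin differentiability of $\mu_t$ along $T_th$ guaranteed by Hypothesis \ref{Hyp2}(ii); (ii) then follows by iterating (i) through the semigroup decomposition $P_t = (P_{t/n})^n$, while (iii) is obtained by differentiating $P_tg$ directly under the integral sign.

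For part (i), I would fix $x\in X$, $h\in H$, $t>0$, $s\ne 0$, and via the change of variable $z = y + sT_th$ — whose pushforward converts $\mu_t$ into $(\mu_t)_{-sT_th}$ in the notation of Section \ref{sect:notation} — rewrite the difference as
\[
P_tg(x+sh) - P_tg(x) = \int_X g(T_tx + z)\,\bigl[(\mu_t)_{-sT_th}-\mu_t\bigr](dz).
\]
By Hypothesis \ref{Hyp2}(ii) we have $T_th\in H_t$, so by \eqref{misura_limite} the signed measure $((\mu_t)_{-sT_th}-\mu_t)/s$ converges in total variation to $-\beta^{\mu_t}_{T_th}\mu_t$ as $s\to 0$. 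Since $g$ is bounded and Borel, passing to the limit yields \eqref{hderivative0}; the $L^1(\mu_t)$-bound of Hypothesis \ref{Hyp2}(ii) together with the linearity \eqref{somma} of Fomin derivatives gives $\|D_HP_tg(x)\|_{\mathcal L(H,\R)} \le C e^{\omega t}t^{-\theta}\|g\|_\infty$, which is the $n=1$ case of \eqref{nth-derivatives}.

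For part (ii), I would induct on $n$, using $P_t = (P_{t/n})^n$ to unfold
\[
P_tg(x) = \int_X\!\cdots\!\int_X g\bigl(T_tx + T_{(n-1)t/n}y_1 + \cdots + T_{t/n}y_{n-1} + y_n\bigr)\,\mu_{t/n}(dy_1)\cdots\mu_{t/n}(dy_n).
\]
Each differentiation in direction $h_k$ shifts the argument of $g$ by $sT_th_k$; by absorbing this shift into the appropriate inner variable and applying the change-of-variable manoeuvre of part (i) to the corresponding $\mu_{t/n}$, each step trades a difference quotient for a Fomin weight $\beta^{\mu_{t/n}}_{T_{t/n}h_k}$ multiplied by $-1$. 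After $n$ iterations this produces \eqref{ident1}, with the $n$-multilinearity in $(h_1,\ldots,h_n)$ built into the construction; \eqref{nth-derivatives} then follows by Fubini combined with the bound of Hypothesis \ref{Hyp2}(ii) applied at time $t/n$ to each weight. For \eqref{nth-derivatives1} when $t\ge 1$, I would write $P_tg = P_1(P_{t-1}g)$ and apply \eqref{nth-derivatives} with $t=1$ to the outer $P_1$, using $\|P_{t-1}g\|_\infty\le\|g\|_\infty$ from \eqref{contrazione}; for $0<t\le 1$ the bound \eqref{nth-derivatives} is already of the required form.

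For part (iii), when $g\in C^n_H(X)$ the continuity and boundedness of $D_H^k g$ for $k\le n$ justify (by \eqref{teofondcalc}(i) and dominated convergence) differentiating $P_tg(x)=\int_X g(T_tx+y)\mu_t(dy)$ under the integral $n$ times, yielding \eqref{hderivative1} and iteratively \eqref{hderivativen}; the norm bound \eqref{kappa-kappa} follows from \eqref{hderivativen}, Hypothesis \ref{Hyp2}(i) and \eqref{contrazione}. Joint continuity of $(t,x)\mapsto D_H^n P_tg(x)(h_1,\ldots,h_n) = P_tu_t(x)$, where $u_t(y):=D_H^ng(y)(T_th_1,\ldots,T_th_n)$, then follows from the sup-norm convergence $\|u_t - u_{t_0}\|_\infty \to 0$ (via a telescoping estimate on the multilinear dependence of $D_H^ng$ on the continuous family $t\mapsto T_th_i$ in $H$) combined with the given joint continuity of $(t,x)\mapsto P_tf(x)$ on $[0,+\infty)\times X$ applied to the fixed $f = u_{t_0}\in C_b(X)$.

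The main obstacle I anticipate is the bookkeeping in part (ii): each of the $n$ differentiations must shift exactly the correct inner integration variable so that the weight $\beta^{\mu_{t/n}}_{T_{t/n}h_k}$ carries the right time-index, and the resulting iterated integral must be identified with the full multilinear derivative $D_H^nP_tg(x)(h_1,\ldots,h_n)$ rather than a mere partial symmetrisation. Producing integrable majorants uniformly in the shift parameter at every intermediate step, so that dominated convergence applies layer by layer and the constant in \eqref{nth-derivatives} drops out cleanly from Hypothesis \ref{Hyp2}(ii), is the technical heart of the induction.
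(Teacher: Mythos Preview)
Your approach matches the paper's in all essential respects. For (ii), the paper structures the induction as $D_H^nP_tg = D_H^nP_{nt/(n+1)}(P_{t/(n+1)}g)$: it applies the induction hypothesis with step size $t/(n+1)$ to the outer factor $P_{nt/(n+1)}$, then differentiates once more using that $P_{t/(n+1)}g\in C^1_H(X)$ (established in (i) for $g\in C_b(X)$), so that the Lipschitz bound \eqref{Zygmund3} furnishes the dominating function for dominated convergence---this is exactly the integrable-majorant mechanism you anticipate, and reducing to the already-established first-derivative case is what makes the bookkeeping clean. For joint continuity in (iii), the paper uses uniform tightness of $\{\mu_{t_k}\}$ together with uniform continuity of $(t,z)\mapsto D_H^ng(z)(T_th_1,\ldots,T_th_n)$ on compact sets, rather than your sup-norm convergence of $u_t$; your route is a bit more direct, but note that it requires $t\mapsto T_th_i$ to be continuous into $H$, which Hypothesis~\ref{Hyp2}(i) only gives as a norm bound, not as strong continuity---so be sure to justify this step (the paper's compactness argument implicitly uses the same continuity, so this is not a divergence in strength, just something to make explicit).
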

\begin{proof} 
(i) For every $x\in X$, $h\in H$ and $s\neq 0$ we have 
$$\begin{array}{lll}
\ds \frac{P_tg(x+sh) - P_tg(x)}{s} & = & \ds  \frac{1}{s} \int_X (g(T_tx + sT_th+ y) -  g(T_tx +  y))\mu_t(dy) 
\\
\\
& = &
\ds  \frac{1}{s} \left( \int_X  g(T_tx +z)( \mu_t)_{sT_th}(dz) - 
\int_X  g(T_tx +  y)\mu_t(dy)\right)\end{array}$$
so that 
$$ \left| \frac{P_tg(x+sh) - P_tg(x)}{s} + \int_X g(T_tx + y)\beta ^{\mu_t}_{T_th}(y)\,\mu_t(dy) \right|
\leq \|g\|_{\infty} \left\| \frac{ (\mu_t)_{sT_th} - \mu_t}{s} - d_{T_th}\mu_t\right\| 
 $$
that vanishes as $s\to 0$ by \eqref{misura_limite}. Therefore, $P_tg$ is differentiable along $h$ at $x$, with derivative $\partial P_tg(x)/\partial h $ given by the right-hand side of \eqref{hderivative0}.  Such a derivative is linear in $h$ by \eqref{somma} and by the linearity of $T_t$, and  by Hypothesis \ref{Hyp2}(ii) it modulus is bounded by 
$\|g\|_{\infty}Ce^{\omega t}t^{-\theta}\|h\|_H$. Therefore, $P_tg$ is $H$-Gateaux differentiable  at $x$ and \eqref{hderivative0} holds. If $g\in C_b(X)$, 
then  for every $x$, $x_0\in X$, and $h\in H$, 
\begin{equation}
\label{Gateaux}
 |(D_HP_tg(x) - D_HP_tg(x_0) )(h)| \leq \int_X | g(T_tx + y)-  g(T_tx_0 + y)|\,| \beta ^{\mu_t}_{T_th}(y)|\,\mu_t(dy)
\end{equation}
where the right-hand side vanishes as $x\to x_0$ by the Dominated Convergence Theorem. So, $D_HP_tg(\cdot)h$ is continuous on $X$, hence $P_tg\in C^1_H(X)$. 
 
\vspace{3mm}
 
(ii) Now let us prove \eqref{ident1} for $g\in C_b(X)$, $t>0$, by induction over $n\in\N$. We have just proved \eqref{ident1} for $n=1$ above. Suppose that \eqref{ident1} holds for $n\in\N$. 
By  the induction hypothesis applied to the $n$-step equipartition $0<\frac{1}{n+1}<\cdot\cdot\cdot<\frac{n}{n+1}$ of $[0,\frac{n}{n+1}]$ for $h_1,...,h_{n+1}\in H$ we have
$$\begin{array}{l}
D_H^{n}P_tg(x)(h_1,...,h_{n}) = D_H^nP_{\frac{n}{n+1}t}(P_{\frac{t}{n+1}}g)(x)(h_1,...,h_n)
\\
\\
\ds = (-1)^n\int_X\cdot\cdot\cdot\int_X P_{\frac{t}{n+1}}g(T_{\frac{n}{n+1}t}x+T_{\frac{n-1}{n+1}t}y_1+\cdot\cdot\cdot T_{\frac{t}{n+1}}y_{n-1}+y_n)\\
\\
\ds \quad\quad\quad\quad \beta^{\mu_{t/(n+1)}}_{T_{t/(n+1)}h_n}(y_n)\cdot\cdot\cdot\beta^{\mu_{t/(n+1)}}_{T_{t/(n+1)}h_1}(y_1)\mu_{t/(n+1)}(dy_n)\cdot\cdot\cdot\mu_{t/(n+1)}(dy_1).
\end{array}$$
Since we already know that $P_{\frac{t}{n+1}}g\in C_H^1(X)$, by Hypothesis \ref{Hyp2}(ii), \eqref{Zygmund3} and the Dominated Convergence Theorem
we can differentiate the right-hand side along $h_{n+1}$ interchanging the partial derivative with the multiple integrals, and using \eqref{hderivative0} we obtain
\begin{align*}
& \frac{\partial }{\partial h_{n+1}} D_H^{n}P_tg(x)(h_1,...,h_{n}) =
(-1)^{(n+1)}\int_X\cdot\cdot\cdot\int_X g(T_{t}x+T_{\frac{nt}{n+1}}y_1+\cdot\cdot\cdot+T_{\frac{t}{n+1}}y_{n}+z)\\
&\beta^{\mu_{t/(n+1)}}_{T_{t/(n+1)}h_{n+1}}(z)\beta^{\mu_{t/(n+1)}}_{T_{t/(n+1)}h_n}(y_n)\cdot\cdot\cdot\beta^{\mu_{t/(n+1)}}_{T_{t/(n+1)}h_1}(y_1)\mu_{t/(n+1)}(dz)\mu_{t/(n+1)}(dy_n)\cdot\cdot\cdot\mu_{t/(n+1)}(dy_1).
\end{align*}
The right-hand side is just $D_H^{  n+1   }P_tg(x)(h_1,...,h_{n+1})$, so that \eqref{ident1} holds for $n+1$. 

The continuity and boundedness on $X$ of the map $x\mapsto D_H^nP_tg(x)(h_1,...,h_{n})$ is obvious by \eqref{ident1}, Hypothesis \ref{Hyp2}(ii) and the Dominated Convergence Theorem. Then also \eqref{nth-derivatives} follows immediately by Hypothesis \ref{Hyp2}(ii).

Assume now that $\omega >0$. Using  \eqref{nth-derivatives}  we get for $0<t\leq 2$
$$\|D^n_HP_tg(x)\|_{\mathcal L^n(H)} \leq K_n \frac{e^{2  \omega }}{t^{n\theta}}\|g\|_{\infty},  $$
while for $t\geq 2$, writing $D^n_HP_tg= D^n_HP_1(P_{t-1}g)$ and using \eqref{contrazione} and  \eqref{nth-derivatives} with $t=1$, we get 
$$\|D^n_HP_tg(x)\|_{\mathcal L^n(H)}    \leq K_n  e^{ \omega } \|P_{t-1}g\|_{\infty} \leq K_n  e^{  \omega } \|g\|_{\infty}. $$
Putting together such estimates, we get \eqref{nth-derivatives1}.
 
 \vspace{3mm}
 
(iii) Now we prove \eqref{hderivative1}. If $g\in C^1_H(X)$, for every $s\neq 0$ and $x\in X$, as before, 
$$\frac{P_tg(x+sh) - P_tg(x)}{s} = \int_X \frac{ g(T_tx + sT_th+ y) -  g(T_tx +  y)}{s}\,\mu_t(dy)$$
and  the right-hand side converges to $\int_X \frac{\partial g}{\partial T_th}(T_tx + y) \mu_t(dy) = \int_X D_H g(T_tx + y)(h) \mu_t(dy) $ as $s\to 0$, by \eqref{Zygmund3} and the Dominated Convergence Theorem. By the definition of $P_t$, such limit coincides with $ P_t (  \partial g/\partial (T_th) ) (x) $.  

If $g\in C^n_H(X)$, formula \eqref{hderivativen} follows applying several times \eqref{hderivative1}. The proof of the continuity of 
$(t,x)\mapsto D_H^nP_{t}g(x)(h_1, \ldots, h_n)$ is similar to the proof of the continuity of $(t,x)\mapsto P_tf(x)$ of \cite[Lemma 2.1]{BRS}. Here is the argument: 

 Let $t_k\to t \in [0, +\infty)$, $x_k\to x\in X$. Then, 
$$\begin{array}{l}
D_H^nP_{t_k}g(x_k)(h_1, \ldots, h_n) - D_H^nP_{t}g(x)(h_1, \ldots, h_n) 
\\
\\
= \ds  \int_X (D^n_Hg(T_{t_k}x_k +y)(T_{t_k}h_1, \ldots, T_{t_k}h_n) - D^n_Hg(T_{t}x+y)(T_{t}h_1, \ldots, T_{t}h_n)) \mu_{t_k}(dy)
\\
\\ + \ds  \int_X  D^n_Hg(T_{t}x+y)(T_{t}h_1, \ldots, T_{t}h_n)) ( \mu_{t_k} (dy) - \mu_{t}(dy))=: I_{1,k}+I_{2,k}\end{array}$$
Since $\mu_{t_k} $ weakly converges to $\mu_t$ as $k\to \infty$ and $ D^n_Hg(T_{t}x+\cdot )(T_{t}h_1, \ldots, T_{t}h_n)$ is continuous and bounded, $I_{2,k}\to 0$ as $k\to \infty$. Still by the weak convergence, the measures $\mu_{t_k}$ are uniformly tight, namely for every $\varepsilon >0$ there is a compact set $K_{\varepsilon}\subset X$ such that $\mu_{t_k}(X\setminus K_{\varepsilon}) \leq \varepsilon$ for every $k\in \N$. Splitting $I_{1,k}$ into the sum of the integral over $K$ and the integral over $X\setminus K_{\varepsilon}$,  and using the uniform continuity of $(t,z)\mapsto D^n_Hg(z)(T_{t}h_1, \ldots, T_{t}h_n)$ on compact sets, one   gets  $\lim_{k\to \infty} I_{1,k} =0$, too.  

By
\eqref{hderivativen} and   Hypothesis \ref{Hyp2}(i) we have, for every natural number $j\leq n$ and $x\in X$, $h_1, \ldots h_j\in H$,  
$$|(D^j_HP_tg(x))(h_1, \ldots , h_j)| \leq \sup_{y\in X}\|D^j_H  g    (y)\|_{\mathcal L^j(H)}\prod_{l=1}^j\|T_th_l\|_H \leq M^je^{j\omega t} \prod_{l=1}^j\|h_l\|_H   \sup_{y\in X}\|D^j_H g (y)\|_{\mathcal L^j(H)},    $$
which yields \eqref{kappa-kappa}. 
\end{proof}

\begin{Remark}
\label{Rem:Frechet}
\em Under our general assumptions we cannot prove that  $D_HP_tg$ is continuous with values in $H^*$ (and therefore that $P_tg$ is $H$-Fr\'echet differentiable, by Lemma \ref{Le:H-Holder}(ii))   for every $t>0$ and $g\in C_b(X)$. 
\eqref{Gateaux}  implies immediately that $D_HP_tg$ is continuous  for every uniformly continuous and bounded $g$, but we prefer to deal with merely continuous rather than uniformly continuous functions. 

 If in addition the functions $\beta ^{\mu_t}_{T_th}$ belong to $L^p(X, \mu_t)$ for some $p>1$, and  for every $t>0$ there exists $C_t>0$ such that 
$\| \beta ^{\mu_t}_{T_th}\|_{L^p(X, \mu_t)} \leq C_t\|h\|_H$ for every  $h\in H$, 
using the H\"older inequality in the right-hand side of  \eqref{Gateaux} and then the Dominated Convergence Theorem yields that $D_HP_tg$ is continuous with values in $H^*$. 
In this case, throughout the paper  we could  use stronger higher order H\"older and Zygmund spaces,  obtained replacing the condition of $H$-Gateaux differentiability by $H$-Fr\'echet differentiability in the definition of the $C^n_H$ spaces. 
\end{Remark}

The behavior of $P_t$ in the H\"older spaces $C^{\alpha}_H(X)$ and in the Zygmund spaces $Z^{k}_H(X)$ is coherent with its behavior in $C_b(X)$, as the next lemma shows.

\begin{Lemma}
\label{Le:sgrHolderZygmund}
For every $t>0$ and $\alpha \in (0, +\infty)$, $k\in \N \cup \{0\}$, $P_t\in {\mathcal L}( C^{k+\alpha }_H(X))$ and there exists $c=c(k+\alpha)>0$ such that 
\begin{equation}
\label{alpha-alpha}
\|P_tf\|_{C^{k+\alpha}_H(X)} \leq c\|f\|_{C^{k+\alpha}_H(X)} , \quad t>0, \; f\in  C^{k+\alpha}_H(X). 
\end{equation}
Moreover, for every $t>0$ and $k\in \N$ , $P_t\in {\mathcal L}( Z^{k}_H(X))$ and there exists $c=c(k)>0$ such that 
\begin{equation}
\label{Zygmund-Zygmund}
\|P_tf\|_{Z^{k}_H(X)} \leq c \|f\|_{Z^{k}_H(X)} , \quad t>0, \; f\in  Z^{k}_H(X). 
\end{equation}
\end{Lemma}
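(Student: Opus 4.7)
The plan is to reduce both bounds to the commutation formulas of Proposition \ref{representation}(iii), which express the $H$-Gateaux derivatives of $P_tg$ as integrals against $\mu_t$ of the derivatives of $g$ composed with the shifts $T_tx+y$, combined with the bound $\|T_th\|_H\leq Me^{\omega t}\|h\|_H$ from Hypothesis \ref{Hyp2}(i). Once this is set up, the statements follow by pulling the relevant finite difference inside the integral and estimating the integrand pointwise.

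For the H\"older bound \eqref{alpha-alpha}, I would first treat the case $k=0$, $\alpha\in(0,1)$. For $x\in X$ and $h\in H$, the representation \eqref{P_t} gives
\begin{equation*}
P_tf(x+h)-P_tf(x)=\int_X\bigl[f(T_tx+T_th+y)-f(T_tx+y)\bigr]\mu_t(dy),
\end{equation*}
so the H\"older seminorm of $f$ together with Hypothesis \ref{Hyp2}(i) yields $[P_tf]_{C^{\alpha}_H(X)}\leq M^{\alpha}e^{\alpha\omega t}[f]_{C^{\alpha}_H(X)}$, and \eqref{contrazione} handles the sup norm. For $k\geq 1$ I apply the identity \eqref{hderivativen}, take the same difference in $x$ along $h\in H$ inside the integral, use that $D^k_Hf$ belongs to $C^{\alpha}_H(X,\mathcal L^k(H))$, and multiply by $\prod_{j=1}^k\|T_th_j\|_H\leq M^ke^{k\omega t}\prod_{j=1}^k\|h_j\|_H$. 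This gives $[D^k_HP_tf]_{C^{\alpha}_H(X,\mathcal L^k(H))}\leq M^{k+\alpha}e^{(k+\alpha)\omega t}[D^k_Hf]_{C^{\alpha}_H(X,\mathcal L^k(H))}$, which, together with \eqref{kappa-kappa} for the $C^k_H$-part of the norm, delivers \eqref{alpha-alpha}.

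The Zygmund estimate \eqref{Zygmund-Zygmund} is obtained in the same way. For $k=1$, applying \eqref{P_t} directly gives
\begin{equation*}
P_tf(x+2h)-2P_tf(x+h)+P_tf(x)=\int_X\bigl[f(T_tx+2T_th+y)-2f(T_tx+T_th+y)+f(T_tx+y)\bigr]\mu_t(dy),
\end{equation*}
and I bound the integrand by $[f]_{Z^{1}_H(X)}\|T_th\|_H$. For $k\geq2$ I apply \eqref{hderivativen} with $n=k-1$ to write the $(k{-}1)$-st derivative of $P_tf$ as an integral of $D^{k-1}_Hf$ composed with $T_tx+y$, then take the second-order difference in $x$ along $h$ under the integral, and bound the resulting inner Zygmund difference of $D^{k-1}_Hf$ in direction $T_th$ by $[D^{k-1}_Hf]_{Z^{1}_H(X,\mathcal L^{k-1}(H))}\|T_th\|_H$, picking up an extra factor $M^{k-1}e^{(k-1)\omega t}\prod_{j=1}^{k-1}\|h_j\|_H$ from $\prod_{j=1}^{k-1}\|T_th_j\|_H$.

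I do not foresee a genuine obstacle: the only point to check is that the finite difference in $x$ may be interchanged with the $\mu_t$-integral, which is immediate by linearity since the integrand is dominated by the relevant H\"older or Zygmund seminorm times products of $H$-norms, uniformly in $y$. Thus the lemma is essentially a direct corollary of Proposition \ref{representation}(iii); the constant $c$ ultimately depends on $t$ only through $\max\{1,M^{k+\alpha}e^{(k+\alpha)\omega t}\}$, respectively $\max\{1,M^{k}e^{k\omega t}\}$, which for each fixed $t$ may be absorbed into $c(k+\alpha)$, resp.\ $c(k)$.
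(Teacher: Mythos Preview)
Your argument is essentially the paper's own for the H\"older and Zygmund seminorm estimates: pull the finite difference under the integral in \eqref{P_t} or in \eqref{hderivativen}, and use $\|T_th\|_H\le Me^{\omega t}\|h\|_H$. This part is correct and matches the paper exactly.

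However, there is a genuine gap in your final sentence. The lemma asserts a constant $c=c(k+\alpha)$ (respectively $c=c(k)$) that is \emph{independent of $t$}, with the estimate holding for all $t>0$. Your bound $M^{k+\alpha}e^{(k+\alpha)\omega t}$ is fine when $\omega\le 0$, but if $\omega>0$ it blows up as $t\to\infty$, and you cannot ``absorb it into $c(k+\alpha)$ for each fixed $t$'' without making $c$ depend on $t$. The paper addresses precisely this: for $\omega>0$ and $t\le 1$ one uses your estimate with the harmless factor $M^{k+\alpha}e^{(k+\alpha)\omega}$, while for $t>1$ one writes $P_tf=P_1P_{t-1}f$ and uses that $P_1\in\mathcal L(C_b(X),C^{k+\alpha}_H(X))$ (from Proposition \ref{representation}, since $P_1$ maps $C_b(X)$ into $C^{k+1}_H(X)\subset C^{k+\alpha}_H(X)$) together with the contraction \eqref{contrazione}, i.e.\ $\|P_tf\|_{C^{k+\alpha}_H(X)}\le \|P_1\|_{\mathcal L(C_b(X),C^{k+\alpha}_H(X))}\|f\|_\infty$. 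This yields a $t$-uniform constant. You should add this step (and the analogous one for the Zygmund case).
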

\begin{proof}
Let  $\alpha \in (0,1)$ and $f\in C^{\alpha}_H(X)$, $t>0$.  
From  the representation formula \eqref{P_t} and   Hypothesis \ref{Hyp2}(i) we get, for every $x\in X$ and $h\in H$, 
\begin{equation}
\label{k=0}
\begin{array}{lll}
|P_tf(x+h) - P_tf(x)| & = & \ds \bigg| \int_X (f(T_tx + T_th +y) - f(T_tx + y))\mu_t(dy) \bigg|
\\
\\
& \leq & [f]_{C^\alpha_H(X)}\|T_th\|_H^{\alpha}  \leq M^{\alpha}e^{ \alpha \omega t}[f]_{C^\alpha_H(X)}\|h\|_H^{\alpha}, 
\end{array}
\end{equation}
so that 
$$\|P_tf\|_{C^\alpha_H(X)} \leq \|f\|_{\infty} + M^{\alpha}e^{ \alpha \omega t}[f]_{C^\alpha_H(X)}. $$
This proves \eqref{alpha-alpha} for $k=0$, in the case $\omega \leq 0$. 

 If $f\in C^{k+\alpha }_H(X)$ for some $k\in \N$, we use \eqref{hderivativen} and again  Hypothesis \ref{Hyp2}(i), that give, for every $x\in X$ and $h$, $h_1, \ldots h_k\in H$, 
$$\begin{array}{l}
|(D^k_HP_tf(x+h) - D^k_HP_tf(x))(h_1, \ldots , h_n)| =
\\
\\
=  \ds \bigg| \int_X (D^k_Hf( T_tx + T_th +y) - D^k_Hf(T_tx + y))(T_th_1, \ldots , T_th_k)\mu_t(dy) \bigg|
\\
\\
\leq  [D^k_Hf]_{C^\alpha_H(X; \mathcal L^k(H))}\|T_th\|_H^{\alpha} \prod_{j=1}^k\|T_th_j\|_H \leq M^{k+\alpha}e^{(k+ \alpha )\omega t}[D^k_Hf]_{C^\alpha_H(X; \mathcal L^k(H))}\|h\|_H^{\alpha}\prod_{j=1}^k\|h_j\|_H. 
\end{array}$$
This estimate and \eqref{kappa-kappa} yield  \eqref{alpha-alpha} for $k\in \N$, in the case $\omega \leq 0$. 

For $\omega >0$ we argue as follows. For every $f\in C^{k+\alpha}(X)$ with $\alpha\in (0,1)$ and $k\in \N \cup\{0\}$ the above estimates yield
$$\|P_t f\|_{C^{k+\alpha}(X)} \leq M^{k+\alpha}e^{(k+\alpha)\omega}\| f\|_{C^{k+\alpha}(X)} , \quad  t\leq 1, $$
while for $t>1$ we write $P_tf = P_1P_{t-1}f$,  so that $\|P_t f\|_{C^{k+\alpha}_H(X) } \leq 
\|P_1\|_{\mathcal L (C_b(X), C^{k+\alpha}_H(X))}\|f\|_{\infty}$ by \eqref{contrazione} ($P_1$ belongs to $\mathcal L (C_b(X), C^{k+\alpha}_H(X))$ because it belongs to $\mathcal L (C_b(X), C^{k+1}_H(X))$ by Proposition  \ref{representation}, and $ C^{k+1}_H(X) \subset C^{k+\alpha}_H(X)$).

The proof of estimates  \eqref{Zygmund-Zygmund}  is similar, and it is left to the reader.
\end{proof}

If $f$ is $H$-H\"older continuous  estimates \eqref{nth-derivatives} may be improved near $t=0$. 
Such improvements are crucial in the proof of our Schauder theorems. 

\begin{Proposition}
\label{stimesgrHolder}
For every  $\alpha\in (0, 1)$ and $n\in \N$ there are constants $K_{n, \alpha}>0$ such that   
\begin{equation}
\label{nth-derivativesHolder}
\|D^n_HP_tf(x)\|_{\mathcal L^n(H)} \leq K_{n, \alpha} \frac{e^{  \omega t}}{t^{(n-\alpha)\theta}} [f]_{C^\alpha_H(X)}, \quad t>0, \; x\in X, \;   f\in C^{\alpha}_H(X). \end{equation}
\end{Proposition}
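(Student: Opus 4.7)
The plan is to establish the claim by interpolating between the two cornerstone estimates already available in Proposition \ref{representation}. From \eqref{nth-derivatives} one has, for $g \in C_b(X)$,
\begin{equation*}
\|D^n_H P_t g\|_\infty \leq K_n\, \frac{e^{\omega t}}{t^{n\theta}}\, \|g\|_\infty ,
\end{equation*}
while combining \eqref{hderivativen} with Hypothesis \ref{Hyp2}(i) yields, for $g \in C^n_H(X)$,
\begin{equation*}
\|D^n_H P_t g\|_\infty \leq M^n\, e^{n\omega t}\, \|D^n_H g\|_\infty .
\end{equation*}
Fixing $t>0$, $x\in X$ and $h_1,\dots,h_n\in H$, I would consider the linear functional $\Lambda: f \mapsto D^n_H P_t f(x)(h_1,\dots,h_n)$, which annihilates constants and therefore factors through the quotients $C_b(X)/\R$ and $C^n_H(X)/\R$ with operator norms bounded as above.

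The next step is to invoke the real interpolation embedding $C^\alpha_H(X) \hookrightarrow (C_b(X),\,C^n_H(X))_{\alpha/n,\infty}$ --- more precisely, its quotient-by-constants variant in which the $K$-functional seminorm is identified with $[\cdot]_{C^\alpha_H}$. This produces
\begin{equation*}
|\Lambda(f)| \leq C_{n,\alpha} \Bigl(K_n\,\tfrac{e^{\omega t}}{t^{n\theta}}\Bigr)^{1-\alpha/n}\bigl(M^n e^{n\omega t}\bigr)^{\alpha/n}\,[f]_{C^\alpha_H}\,\prod_{j=1}^n\|h_j\|_H ,
\end{equation*}
which simplifies to a bound of the form $C'_{n,\alpha}\, e^{(1+\alpha(n-1)/n)\omega t}/t^{(n-\alpha)\theta}\, [f]_{C^\alpha_H}\,\prod_j \|h_j\|_H$.

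It remains to upgrade the exponential factor to the $e^{\omega t}$ displayed in the statement. For $\omega\leq 0$ this is automatic since $(1+\alpha(n-1)/n)\omega t \leq \omega t$. For $\omega > 0$, I would split: for $t\leq 1$ the extra exponential factor is bounded and absorbed into $K_{n,\alpha}$; for $t>1$ one replaces \eqref{nth-derivatives} by the sharper \eqref{nth-derivatives1} (uniform in $t$ and exponential-free), so that re-interpolation produces the claimed form.

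The main obstacle is the rigorous verification of the interpolation identification with the semi-norm $[\cdot]_{C^\alpha_H}$ on the right-hand side. In infinite dimensions there is no ready-made mollifier on $C^\alpha_H(X)$ independent of $P_t$ itself, so the $K$-functional estimate must be obtained via a splitting $f = (f-P_\tau f)+P_\tau f$ using the semigroup as a smoothing device, together with Lemma \ref{Le:H-Holder}(i), which supplies the reverse-type interpolation inequality needed to close the argument. Carefully unravelling this apparent circularity --- in the spirit of the techniques developed in \cite{CL} --- is the technical heart of the proof.
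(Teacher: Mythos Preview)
Your interpolation instinct is sound, but the implementation you sketch has a genuine circularity that is not merely ``apparent''. To place $C^\alpha_H(X)$ inside $(C_b(X),C^n_H(X))_{\alpha/n,\infty}$ at the level of seminorms you need, for each $\tau>0$, a decomposition $f=a+b$ with $\|a\|_\infty$ small and $\|D^n_H b\|_\infty$ controlled by $[f]_{C^\alpha_H}$. Your proposed smoothing $b=P_\sigma f$ gives $\|D^n_H P_\sigma f\|_\infty$ only in terms of $\|f\|_\infty$ via \eqref{nth-derivatives}; to bound it by $[f]_{C^\alpha_H}$ is precisely \eqref{nth-derivativesHolder}, the estimate you are proving. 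Moreover, the companion piece $\|f-P_\sigma f\|_\infty$ cannot be controlled by $[f]_{C^\alpha_H}$ in general, since $P_\sigma f(x)-f(x)=\int_X (f(T_\sigma x+y)-f(x))\,\mu_\sigma(dy)$ involves increments $T_\sigma x+y-x$ that need not lie in $H$. So the $K$-functional bound is not available by this route, and no mollifier independent of $P_t$ is at hand in the $H$-directional infinite-dimensional setting.

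The paper avoids interpolation spaces entirely. For $n=1$ it runs the interpolation by hand with a free parameter $s>0$: write
\[
D_HP_tf(x)(h)=\frac{1}{s}\int_0^s\bigl(D_HP_tf(x)(h)-D_HP_tf(x+\sigma h)(h)\bigr)\,d\sigma+\frac{P_tf(x+sh)-P_tf(x)}{s}.
\]
The second term is $\leq M^\alpha e^{\alpha\omega t}s^{\alpha-1}\|h\|_H^\alpha[f]_{C^\alpha_H}$ by \eqref{k=0}. For the first term one observes directly from the representation \eqref{hderivative0} that $x\mapsto D_HP_tf(x)(h)$ is itself $C^\alpha_H$, with seminorm $\leq M^\alpha e^{\alpha\omega t}\cdot Ce^{\omega t}t^{-\theta}\|h\|_H\,[f]_{C^\alpha_H}$; this is the non-circular input, since it concerns the $\alpha$-H\"older modulus of the derivative rather than its sup norm for H\"older data. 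Optimizing at $s=t^\theta e^{-\omega t}/\|h\|_H$ yields \eqref{nth-derivativesHolder} for $n=1$. The case $n>1$ then follows by the semigroup splitting $D^n_HP_tf=D^{n-1}_HP_{t/2}\bigl(\partial P_{t/2}f/\partial(T_{t/2}h_n)\bigr)$, applying the $n=1$ bound to the inner factor and \eqref{nth-derivatives} to the outer one.
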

\begin{proof}
The key step is to prove that  \eqref{nth-derivativesHolder} holds for $n=1$. 
We use the same argument of \cite{CL}.   Let $t>0$, $f\in C^{\alpha}_H(X)$, $h\in H\setminus \{0\}$. For every $s>0 $ we have 
 $$\begin{array}{lll}
 D_H P_tf(x)(h)  & = & \ds  \left( D_H P_tf(x)(h)- \frac{P_tf(x+sh) - P_tf(x)}{s}\right) +
   \frac{P_tf(x+sh) - P_tf(x)}{s} 
 \\
 \\
&  = & \ds \bigg( \frac{1}{s} \int_0^s (D_HP_tf(x)(h) - D_H P_tf(x+ \sigma h)(h)) d\sigma  \bigg)
 +   \frac{P_tf(x+sh) - P_tf(x)}{s} 
\\
\\
& =: & I_1(s)+I_2(s). \end{array}$$
To estimate $ I_1(s) $ we remark that  for every $k\in H$, by \eqref{hderivative0} we have
$$\begin{array}{l}
|(D_HP_tf(x+k) - D_HP_tf(x))(h)| = \ds \left| \int_X (f(T_tx + T_tk +y) - f(T_tx + y))\beta^{\mu_t}_{T_th}(y) \mu_t(dy) \right|
\\
\\
  \leq   [f]_{C^\alpha_H(X)} \|T_tk\|_H^{\alpha}  \|  \beta^{\mu_t}_{T_th}\|_{L^1(X, \mu_t)}
\leq 
[f]_{C^\alpha_H(X)}M^{\alpha}e^{ \alpha \omega t}\|k\|_H^{\alpha}\ds  \frac{C e^{\omega t}}{t^{\theta}}\|h\|_H. 
\end{array}$$
Using this estimate with $k=\sigma h$ we get 
  $$ |  I_1(s) |   \leq  \frac{1}{s} \int_0^s |D_H P_tf(x+ \sigma h)(h) - D_HP_tf(x)(h)| d\sigma   
\leq  \frac{1}{s}  \frac{CM^{\alpha}}{t^{\theta}} e^{( \alpha +1) \omega t} \int_0^s  
 \sigma^{\alpha} d\sigma \,\|h\|_H^{\alpha +1} 
[f]_{C^\alpha_H(X)}.  $$

%
On the other hand,  by \eqref{k=0}  we get  
$$|  I_2(s) | \leq M^{\alpha} e^{ \alpha   \omega t}s^{\alpha -1}  \|h\|_H^{\alpha }[f]_{C^\alpha_H(X)} . $$
Summing up, 
$$ |D_H P_tf(x)(h)| \leq \bigg(  \frac{CM^{\alpha}}{\alpha +1}  e^{( \alpha +1) \omega t}\|h\|_H^{\alpha +1} \frac{s^{\alpha}}{t^\theta} 
+ M^{\alpha} e^{ \alpha   \omega t}s^{\alpha -1}  \|h\|_H^{\alpha } \bigg) [f]_{C^\alpha_H(X)} , \quad s>0. $$
Choosing now $s= t^{\theta}e^{-\omega t}/ \|h\|_H$ we get 
$$ |D_H P_tf(x)(h)| \leq \bigg( \frac{CM^{\alpha}}{\alpha +1} + M^{\alpha}\bigg) \frac{1}{t^{(1-\alpha)\theta}}  e^{\omega t}  \|h\|_H  [f]_{C^\alpha_H(X)} , $$
which yields  \eqref{nth-derivativesHolder}  for $n=1$. 

For $n>1$ we have $D^n_HP_tf = D^n_HP_{t/2}g$, with $g= P_{t/2}f$. By \eqref{hderivative1}, 
$$ \frac{\partial ^n P_tf}{\partial h_n\cdots \partial h_1} =  \frac{\partial ^{n-1} }{\partial h_{n-1}\cdots \partial h_1}P_{t/2}\left(  \frac{\partial  P_{t/2}f}{\partial (T_{t/2}h_n)} \right)$$
so that  \eqref{nth-derivativesHolder} follows from  \eqref{nth-derivatives}  and  \eqref{nth-derivativesHolder}  with $n=1$. 
\end{proof}

 
  \subsection{Schauder and Zygmund estimates: stationary equations}
 

 In this section we use the smoothing properties of $P_t$ to deduce regularity results for the elements of $D(L)$, namely for the functions $u$ given by   \eqref{L} for some $\lambda >0$ and $f\in C_b(X)$. Estimate \eqref{contrazione}   yields immediately  
 \begin{equation}
 \label{stimasup}
 \|u\|_{\infty} \leq \frac{1}{\lambda} \|f\|_{\infty}. 
 \end{equation}
  
The first (not optimal) regularity result is a standard consequence of Propositions \ref{representation} and \ref{stimesgrHolder}.

 \begin{Proposition}
 \label{Pr:nonoptimal}
Given $\lambda >0$ and $f\in C_b(X)$, let $u=R(\lambda, L)f$.  
 \begin{itemize}
 \item[(i)] Let $\theta <1$. For every $n\in \N$ such that $n<1/\theta$, $u\in C^n_H(X)$. There exists $C=C(\lambda)>0$, independent of $f$, such that 
 \begin{equation}
 \label{C^n0}
 \|u\|_{C^n_H(X)}\leq C\|f\|_{\infty}
 \end{equation}
  \item[(ii)] Let $\alpha\in (0,1)$ be such that $\alpha + 1/\theta >1$. For every $f\in C^{\alpha}_H(X)$ and for every $n\in \N$ such that $n<\alpha +1/\theta$, $u\in C^n_H(X)$. There exists $C= C(\lambda, \alpha)>0$, independent of $f$, such that 
 \begin{equation}
 \label{C^nalpha}
 \|u\|_{C^n_H(X)}\leq C\|f\|_{C^{\alpha}_H(X)}
 \end{equation}
\end{itemize}
\end{Proposition}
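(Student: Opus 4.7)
The approach will be to start from the Laplace-transform representation $u(x) = \int_0^\infty e^{-\lambda t}P_t f(x)\,dt$ coming from \eqref{L} and pass $H$-Gateaux derivatives under the integral sign $n$ times, using the smoothing estimates from Propositions \ref{representation} and \ref{stimesgrHolder}. The sup-norm bound \eqref{stimasup} already supplies the $\|u\|_\infty$ part of the norm, so the work is in the derivatives.

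For part (i), I would proceed by induction on $k = 1,\ldots, n$, showing at each step that
\[
D^k_H u(x)(h_1,\ldots,h_k) = \int_0^\infty e^{-\lambda t} D^k_H P_t f(x)(h_1,\ldots,h_k)\,dt.
\]
Given the $(k-1)$-th identity, form the incremental ratio along $h_k$ of the map $y\mapsto D^{k-1}_H P_t f(y)(h_1,\ldots,h_{k-1})$; by \eqref{Zygmund3} applied to this function together with estimate \eqref{nth-derivatives1} on its $H$-Gateaux derivative (which by Proposition \ref{representation}(ii) is $D^k_H P_t f(y)(h_1,\ldots,h_k)$), this ratio is dominated pointwise in $t$ by $K_k'\max\{1,t^{-k\theta}\}\|f\|_\infty\prod_j\|h_j\|_H$. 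The hypothesis $n < 1/\theta$ makes this majorant integrable against $e^{-\lambda t}$ on $(0,\infty)$ for every $k \leq n$, so the dominated convergence theorem justifies the limit and yields the displayed formula. The continuity in $x$ of $D^n_H u(x)(h_1,\ldots,h_n)$ comes from another application of dominated convergence, using the continuity of $y\mapsto D^n_H P_t f(y)(h_1,\ldots,h_n)$ from Proposition \ref{representation}(ii) together with the same majorant. Estimate \eqref{C^n0} then follows from
\[
\|D^k_H u(x)\|_{\mathcal L^k(H)} \leq K_k'\|f\|_\infty \int_0^\infty e^{-\lambda t}\max\{1, t^{-k\theta}\}\,dt,
\]
with the integral finite and depending only on $\lambda$, $k$ and $\theta$.

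For part (ii), the same inductive scheme applies verbatim, except that at each step one invokes the sharper estimate \eqref{nth-derivativesHolder} from Proposition \ref{stimesgrHolder} with the given $\alpha$. This replaces the majorant $t^{-k\theta}$ by $t^{-(k-\alpha)\theta}$, and integrability against $e^{-\lambda t}$ on $(0,\infty)$ now requires only $(k-\alpha)\theta < 1$, i.e.\ $k < \alpha + 1/\theta$, which holds for all $k\leq n$ by hypothesis. The resulting constant $C(\lambda,\alpha)$ depends on $\alpha$ through the constants $K_{k,\alpha}$.

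Main obstacle: the only nontrivial technical point is the rigorous justification of the iterated interchange of $H$-Gateaux differentiation with the improper integral on $(0,\infty)$, in particular handling the singular behavior of the majorant as $t\to 0^+$; this is exactly where the hypothesis $n < 1/\theta$ (resp.\ $n < \alpha + 1/\theta$) enters. That the limit lands in $C^n_H(X)$ as defined in Section \ref{sect:notation} is then automatic from the $n$-linearity and uniform boundedness of $(h_1,\ldots,h_k)\mapsto D^k_H P_t f(x)(h_1,\ldots,h_k)$ carried through the integral.
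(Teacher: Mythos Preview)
Your approach is essentially the same as the paper's: differentiate the Laplace integral termwise via dominated convergence, using the smoothing estimates of Propositions \ref{representation} and \ref{stimesgrHolder}. The one substantive difference is how you handle large $t$ when $\omega>0$. For part (i) you invoke \eqref{nth-derivatives1}, which already gives a majorant $K_k'\max\{1,t^{-k\theta}\}$ integrable against $e^{-\lambda t}$ for every $\lambda>0$; this cleanly avoids the case split the paper performs (Step~1: $\lambda>\omega$ using \eqref{nth-derivatives} directly; Step~2: $\lambda\in(0,\omega]$ via the perturbation identity $(\omega+1)u-Lu=(\omega+1-\lambda)u+f$ and \eqref{stimasup}).

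For part (ii), however, there is a gap. Estimate \eqref{nth-derivativesHolder} reads
\[
\|D^k_H P_t f(x)\|_{\mathcal L^k(H)}\le K_{k,\alpha}\,\frac{e^{\omega t}}{t^{(k-\alpha)\theta}}\,[f]_{C^\alpha_H(X)},
\]
and there is no H\"older analogue of \eqref{nth-derivatives1} stated in the paper. So your asserted majorant ``$t^{-(k-\alpha)\theta}$'' is not justified as written: when $\omega>0$ and $\lambda\le\omega$, the function $e^{-(\lambda-\omega)t}t^{-(k-\alpha)\theta}$ is not integrable on $(0,\infty)$ and dominated convergence fails. You can close this gap in either of two ways: (a) derive a H\"older version of \eqref{nth-derivatives1} by the same trick used in its proof, writing $D^k_H P_t f=D^k_H P_1(P_{t-1}f)$ for $t\ge 1$ and bounding via \eqref{nth-derivatives} with $t=1$ and $\|P_{t-1}f\|_\infty\le\|f\|_\infty\le\|f\|_{C^\alpha_H(X)}$; or (b) follow the paper and first show $u\in C^\alpha_H(X)$ with $\|u\|_{C^\alpha_H(X)}\le c\|f\|_{C^\alpha_H(X)}$ (immediate from \eqref{alpha-alpha}), then apply the $\lambda>\omega$ case to the equation $(\omega+1)u-Lu=(\omega+1-\lambda)u+f$.
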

\begin{proof}
The proof is in two steps. First we consider the case $\lambda >\omega$, and then, if $\omega >0$, the case $\lambda\in (0,  \omega]$.

\noindent 
{\em First step}: $\lambda > \omega$. 
Estimate \eqref{nth-derivatives}  yields, for every $k\in \{1, \ldots, n\}$, 
 \begin{equation}
 \label{stima_integrando0}  e^{-\lambda t}\|D^k_HP_tf(x)\|_{\mathcal L^k(H)} \leq   e^{-\lambda  t}
  K_{k} \frac{e^{  \omega t}}{t^{k\theta}} \|f\|_{ \infty}, \quad t>0, \; x\in X, \; f\in C_b(X), 
   \end{equation}
and if $\alpha \in (0, 1)$,  \eqref{nth-derivativesHolder} yields, for every $k\in \{1, \ldots, n\}$, 
 \begin{equation}
 \label{stima_integrando}  e^{-\lambda t}\|D^k_HP_tf(x)\|_{\mathcal L^k(H)} \leq   e^{-\lambda t}
  K_{k, \alpha} \frac{e^{  \omega t}}{t^{(k-\alpha)\theta}} [f]_{ C^{\alpha }_H(X)}, \quad t>0, \; x\in X,\;  f\in C^{\alpha}_H(X). 
   \end{equation}
The right-hand sides of \eqref{stima_integrando0} and \eqref{stima_integrando} belong to $L^1(0, +\infty)$ because $    \lambda> \omega$,  and $k\theta \in (0, 1)$ in \eqref{stima_integrando0}, $ (k-\alpha)\theta \in (0, 1)$ in \eqref{stima_integrando}. Therefore $u$ is $n$ times $H$-Gateaux differentiable at every $x\in X$, and for every $h_1, \ldots , h_k\in H$ with $k\in \{1, \ldots, n\}$  we have
$$D^k_Hu(x)(h_1, \ldots , h_k) = \int_0^{\infty}  e^{-\lambda t}D^k_HP_tf(x)(h_1, \ldots , h_k)dt. $$
\eqref{stima_integrando0} and  \eqref{stima_integrando} imply respectively, for every $x\in X$ and $k\in \{1, \ldots, n\}$, 
 $$\|D^k_Hu(x)\|_{\mathcal L^k(H)} \leq  K_{k}\int_0^{\infty}  \frac{e^{  \omega t -\lambda t}}{t^{k\theta}} dt \, \|f\|_{\infty}
 \leq  \frac{ K_{k} \Gamma( 1- k\theta)}{(\lambda - \omega )^{ 1- k\theta} }\|f\|_{\infty} ,  $$
and 
 $$\|D^k_Hu(x)\|_{\mathcal L^k(H)} \leq  K_{k, \alpha}\int_0^{\infty}  \frac{e^{  \omega t -\lambda t}}{t^{(k-\alpha)\theta}} dt \, [f]_{ C^{\alpha }_H(X)}
 \leq  \frac{K_{k, \alpha} \Gamma( 1- (k-\alpha)\theta)}{(\lambda- \omega )^{ 1- (k-\alpha)\theta} } [f]_{ C^{\alpha }_H(X)},  $$
for $\alpha >0$, $f\in  C^{\alpha }_H(X)$ (here, $\Gamma$ is the Euler function). 
In both cases, since for every $t>0$ the function $x\mapsto D^k_HP_tf(x)(h_1, \ldots , h_k)$ is continuous  by Proposition \ref{representation}, estimates \eqref{stima_integrando0},  \eqref{stima_integrando} and the Dominated Convergence Theorem imply that $x\mapsto D^k_Hu(x)(h_1, \ldots , h_k)$ is continuous   for $k=1, \ldots, n$. 
Therefore, $u\in C^n_H(X)$ and 
\begin{equation}
 \label{stimaC^n0}
 \|u\|_{C^n_H(X)} 
 \leq \frac{\|f\|_{\infty} }{  \lambda   }+ \bigg(\sum_{k=1}^n  K_{k } \frac{ \Gamma( 1- k\theta)}{(\lambda - \omega )^{ 1- k\theta} } \bigg)\|f\|_{\infty}, 
 \end{equation}
so that \eqref{C^n0} holds with $C= 1 + \sum_{k=1}^n  K_{k }   \Gamma( 1- k\theta)/(\lambda- \omega )^{ 1- k\theta} $. 
In the case that  $\alpha\in (0,1)$ and $f\in C^{\alpha}_H(X)$,  we get
\begin{equation}
 \label{stimaC^n}
 \|u\|_{C^n_H(X)} 
 \leq  \frac{\|f\|_{\infty} }{  \lambda   } + \bigg(\sum_{k=1}^n  K_{k, \alpha} \frac{ \Gamma( 1- (k-\alpha)\theta)}{(\lambda- \omega )^{ 1- (k-\alpha)\theta} }\bigg) [f]_{ C^{\alpha }_H(X)}, 
 \end{equation}
so that \eqref{C^nalpha} holds with $C= 1  /\lambda    + \sum_{k=1}^n  K_{k ,\alpha}   \Gamma( 1- (k-\alpha)\theta)/(\lambda- \omega )^{ 1- (k-\alpha)\theta} $. 
 
\vspace{2mm}
  
\noindent {\em Second step}: $\omega >0$, $\lambda \in (0, \omega]$. 

In this case the statement follows from Step 1 by a perturbation argument. Indeed, since $\lambda u - Lu = f$, 
we have $(\omega +1)u - Lu = (\omega +1 -\lambda )u + f$. The right-hand side belongs to $C_b(X)$, and its sup norm is bounded by $((\omega +1 -\lambda )/\lambda +1)\|f\|_{\infty}$, by \eqref{stimasup}. So, statement (i) follows from Step 1. 

Concerning statement (ii),  it is sufficient to prove that $u\in C^{\alpha}_H(X)$, with 
$\|u\|_{C^{\alpha}_H(X)} \leq C \|f\|_{C^{\alpha}_H(X)} $ for some $C>0$, and to use Step 1 as above. 
This is a simple consequence of Lemma  \ref{Le:sgrHolderZygmund}. Indeed, using \eqref{alpha-alpha}  with $k=0$ we get 
$$|u(x+h) - u(x)|    \leq   \int_0^{\infty} e^{-\lambda t}|P_tf(x+h) - P_tf(x)| \,dt \leq   \int_0^{\infty} e^{-\lambda   t}c\|h\|_H^{\alpha}[f]_{C^\alpha_H(X)}dt
= \frac{c}{\lambda  } \|h\|_H^{\alpha}[f]_{C^\alpha_H(X)}. $$
%
%
\end{proof}
 
 Notice that for $n\geq 1/\theta$ in case (i) and for $n\geq  \alpha + 1/\theta$ in case (ii), the arguments used above do not work, since the functions 
 $t\mapsto t^{-n\theta}$, $t\mapsto t^{-(n-\alpha)\theta}$, respectively,  are not integrable near $0$, and  \eqref{stima_integrando0}, \eqref{stima_integrando} are not helpful to conclude that $D^n_Hu(x)$ exists.

 Optimal regularity results are provided by the next theorems. The first one deals with H\"older regularity, and the second one with Zygmund regularity. 
 
 \begin{Theorem}
 \label{Th:Schauder_ell}
 Let $\lambda >0$, $f\in C_b(X)$ and let $u=R(\lambda, L)f$. The following statements hold. 
  \begin{itemize}
 \item[(i)] If $1/\theta \notin \N$ then $u\in C^{1/\theta}_H(X)$. There exists $C= C(\lambda)>0$, independent of $f$, such that 
 \begin{equation}
 \label{maggSchauder_ell0}
 \|u\|_{ C^{  1/\theta}_H(X)} \leq C  \|f\|_{\infty}. 
 \end{equation}
  \item[(ii)] If $\alpha\in (0,1)$ with $\alpha + 1/\theta \notin \N$ and $f\in C^\alpha_H(X)$ then $u\in C^{\alpha + 1/\theta}_H(X)$ and there  exists $C= C(\lambda, \alpha)>0$, independent of $f$, such that 
 \begin{equation}
 \label{maggSchauder_ell}
 \|u\|_{ C^{\alpha + 1/\theta}_H(X)} \leq C  \|f\|_{ C^{\alpha }_H(X)}. 
 \end{equation}
\end{itemize}
\end{Theorem}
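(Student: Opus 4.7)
\bigskip

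\noindent\textbf{Proof proposal.} I would follow the same two-step scheme of Proposition \ref{Pr:nonoptimal}: first establish both estimates when $\lambda>\omega^+$, then descend to general $\lambda>0$ by the same perturbation trick, writing $(\omega+1)u-Lu=(\omega+1-\lambda)u+f$ and invoking \eqref{stimasup} together with Lemma \ref{Le:sgrHolderZygmund} to bootstrap the datum in $C_b(X)$ or $C^\alpha_H(X)$ respectively. So in what follows I assume $\lambda>\omega$. Write $n:=\lfloor 1/\theta\rfloor$ and $\sigma:=1/\theta-n\in(0,1)$ in case (i); $n:=\lfloor\alpha+1/\theta\rfloor$ and $\sigma:=\alpha+1/\theta-n\in(0,1)$ in case (ii). In both cases Proposition \ref{Pr:nonoptimal} already gives $u\in C^n_H(X)$ with the corresponding bound on $\|u\|_{C^n_H(X)}$, and, by the integrability there established, the differentiation under the integral
\[
D^n_Hu(x)(h_1,\dots,h_n)=\int_0^\infty e^{-\lambda t}\,D^n_HP_tf(x)(h_1,\dots,h_n)\,dt
\]
is justified. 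Only the $\sigma$-H\"older seminorm of $D^n_Hu$ remains to be controlled.

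\medskip

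\noindent The main point is a time-splitting argument at a scale $\tau$ chosen in terms of the increment $\|k\|_H$, combining two bounds of different strength. For any $x\in X$ and $k\in H$, I split $\int_0^\infty=\int_0^\tau+\int_\tau^\infty$ and on the short interval $(0,\tau)$ use the \emph{trivial} estimate obtained from \eqref{nth-derivatives} (case (i)) or \eqref{nth-derivativesHolder} (case (ii)) applied to $D^n_HP_tf(x+k)$ and $D^n_HP_tf(x)$ separately, which gives a bound of order $e^{\omega t}t^{-n\theta}\|f\|_\infty$ or $e^{\omega t}t^{-(n-\alpha)\theta}[f]_{C^\alpha_H}$, respectively. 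On the long interval $(\tau,\infty)$, since by Proposition \ref{representation} the function $D^n_HP_tf$ is $H$-Gateaux differentiable with the $(n+1)$-th derivative bound \eqref{nth-derivatives} (resp.\ \eqref{nth-derivativesHolder}) at level $n+1$, I integrate along the segment joining $x$ and $x+k$ to get
\[
\|D^n_HP_tf(x+k)-D^n_HP_tf(x)\|_{\mathcal L^n(H)}\ \leq\ K_{n+1}\,\frac{e^{\omega t}}{t^{(n+1)\theta}}\|k\|_H\|f\|_\infty
\]
(and the analogue with $(n+1-\alpha)\theta$ and $[f]_{C^\alpha_H}$ in case (ii)).

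\medskip

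\noindent Using $\lambda>\omega$ to kill $e^{\omega t}$, the two contributions to $\|D^n_Hu(x+k)-D^n_Hu(x)\|_{\mathcal L^n(H)}$ behave, up to constants, like $\tau^{1-n\theta}$ and $\|k\|_H\,\tau^{1-(n+1)\theta}$ respectively (the first integral is convergent since $n\theta<1$; the second integral is convergent at $\infty$ for $\lambda>\omega$, while its possible divergence at $0$, due to $(n+1)\theta>1$, is precisely cured by the cut-off $\tau$). Balancing the two by choosing $\tau=\|k\|_H^{1/\theta}$ yields both terms of order $\|k\|_H^{(1-n\theta)/\theta}=\|k\|_H^{\sigma}$ in case (i), and analogously $\|k\|_H^{(1-(n-\alpha)\theta)/\theta}=\|k\|_H^{\sigma}$ in case (ii). This is exactly the H\"older exponent we need. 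For $\|k\|_H\ge 1$, i.e.\ $\tau\ge 1$, the estimate is a direct consequence of the already proved $C^n_H$-bound in Proposition \ref{Pr:nonoptimal}, since $\|D^n_Hu(x+k)-D^n_Hu(x)\|_{\mathcal L^n(H)}\le 2\|D^n_Hu\|_\infty \le 2\|D^n_Hu\|_\infty\|k\|_H^\sigma$.

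\medskip

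\noindent The only genuinely delicate point — and what I regard as the main obstacle — is that one cannot simply differentiate $u$ one more time: the $(n+1)$-th derivative bound for $P_tf$ fails to be integrable at $t=0$ (this is exactly the obstruction that forced Proposition \ref{Pr:nonoptimal} to stop short of the critical order). The splitting at $\tau=\|k\|_H^{1/\theta}$ is tailored to sidestep this non-integrability, shifting the loss from an integral over $(0,\tau)$ to a boundary term of the right order $\|k\|_H^\sigma$. This is also the structural reason why the excluded cases $1/\theta\in\mathbb N$ and $\alpha+1/\theta\in\mathbb N$ cannot be reached by this method and must be treated separately by Zygmund-type estimates.
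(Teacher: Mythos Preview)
Your proposal is correct and follows essentially the same approach as the paper: the identical time-splitting at $\tau=\|k\|_H^{1/\theta}$, the trivial bound via \eqref{nth-derivatives}/\eqref{nth-derivativesHolder} on $(0,\tau)$, the segment integration using the $(n+1)$-th derivative on $(\tau,\infty)$, and the same perturbation argument to descend from $\lambda>\omega$ to general $\lambda>0$. The paper treats the case $n=0$ separately (working directly with $u$ rather than $D^n_Hu$), but this is a notational rather than a substantive difference; your extra remark about $\|k\|_H\geq 1$ is harmless but unnecessary, since the splitting computation in fact works uniformly in $k$.
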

\begin{proof}
Let  $n\in \N\cup \{0\}$ be the integral part of $\alpha +1/\theta$, with $\alpha =0$ in the case of  statement (i) and $\alpha \in (0, 1)$ in the case of statement (ii). If $n=0$, $u\in C_b(X)$ and \eqref{stimasup} holds. If $n>0$, 
we already know, by Proposition \ref{Pr:nonoptimal}, that $u\in C^n_H(X)$, and that estimate \eqref{stimaC^n0} (resp. estimate \eqref{stimaC^n}) holds. 
 
We have to prove  that $D^n_Hu$ belongs to $C^{\alpha +1/\theta   -n}_H(X, \mathcal L^n(H))$. 
As in Proposition \ref{Pr:nonoptimal}, it is sufficient to consider the case $\lambda >\omega$. If $\omega >0$, the case $\lambda\in (0,  \omega]$ is recovered by the same argument used in Step 2 of Proposition \ref{Pr:nonoptimal}.

We treat separately the cases $n>0$ and $n=0$.

Let  $n=0$. This implies that $\theta >1$ in statement (i), and $(1-\alpha)\theta >1$ in statement (ii). 
For every fixed $h$,  we split $ u = a_h  + b_h $, where
\begin{equation}
\label{a,b0}
a_h(y) =   \int_0^{\|h\|^{1/\theta}_H} e^{-\lambda t}  P_tf(y) \,dt, \quad  b_h(y) =   \int_{\|h\|^{1/\theta}_H}^{\infty} e^{-\lambda t}  P_tf(y) \,dt, \quad y\in X. 
\end{equation}
So, for every $x\in X$ we have 
$$ 
| a_h(x+h) - a_h( x) | \leq \int_0^{\|h\|^{1/\theta}_H} e^{-\lambda t}  |P_t f(x+h) - P_t f(x)|\, dt 
\leq \int_0^{\|h\|^{1/\theta}_H} 2\|f\|_{\infty}  dt = 2\|h\|^{1/\theta}_H \|f\|_{\infty}. 
 $$

To estimate $| b_h(x+h) - b_h(x) | $ we 
remark  that by \eqref{teofondcalc}(i) and \eqref{nth-derivatives} with $n=1$ for every $t>0$ we have 
\begin{equation}
\label{eq:1}
\| P_tf(x+h) -  P_tf(x)\|  \leq \sup_{y\in X} \|D_H P_tf(y)\|_{H^*} \|h\|_H
\leq K_{1 } \frac{  e^{ \omega t} }{  t^{ \theta}   }  \|h\|_H \|f\|_{\infty}, 
\end{equation}
which yields 

%
$$\begin{array}{lll}
 | b_h(x+h) - b_h(x) | & \leq & \ds  \int_{\|h\|^{1/\theta}_H}^{\infty} e^{-\lambda t}  | P_t f(x+h) - P_t f(x)|\, dt \leq    \int_{\|h\|^{1/\theta}_H}^{\infty}  \frac{K_{1 }}{t^{ \theta}} dt\,\|h\|_H
 \|f\|_{\infty}
 \\
\\
& \leq & \ds
 \frac{K_{1 }}{ \theta -1} \|h\|_H^{ 1/ \theta  } \|f\|_{\infty}. 
  \end{array}$$
Summing up,   $u\in C^{1/\theta}_H(X)$, and 
$$[u]_{C^{ 1/\theta}_H(X)} \leq  \bigg(2+  \frac{K_{1 }}{  \theta -1}\bigg)\|f\|_{\infty}.  $$
This estimate and \eqref{stimasup} give \eqref{maggSchauder_ell0} with $C(\lambda ) = 2+ K_{1 }/( \theta -1) + 1/\lambda$, in the case that $\theta >1$.

If  $\alpha \in (0,1)$ and $f\in C^{\alpha}_H(X)$ we use    \eqref{k=0}    and we get 
$$\begin{array}{l}
| a_h(x+h) - a_h(x) |   \leq   \ds  \int_0^{\|h\|^{1/\theta}_H} e^{-\lambda t}  | P_t f(x+h) - P_t f(x)|\, dt 
\\
\\
  \leq   \ds \int_0^{\|h\|^{1/\theta}_H}  e^{-(\lambda -\alpha \omega)t} M^{\alpha}\|h\|^{\alpha}_H [f]_{C^{\alpha}_H(X)} dt
\leq M^{\alpha}\|h\|^{\alpha + 1/\theta}_H [f]_{C^{\alpha}_H(X)}. 
\end{array}$$
To estimate $ | b_h(x+h) - b_h(x) | $  we use  \eqref{nth-derivativesHolder} with $n=1$, that gives  
\begin{equation}
\label{eq:2}
\| P_tf(x+h) -  P_tf(x)\|  \leq \sup_{y\in X} \|D_H P_tf(y)\|_{H^*} \|h\|_H
\leq K_{1, \alpha} \frac{  e^{  \omega t} }{  t^{( 1-\alpha)\theta}   }  \|h\|_H [f]_{C^\alpha_H(X)}, 
\end{equation}
which yields   
$$ | b_h(x+h) - b_h(x) |   \leq   \int_{\|h\|^{1/\theta}_H}^{\infty}  \frac{K_{1, \alpha}}{t^{(1-\alpha)\theta}} dt\,\|h\|_H
 [f]_{C^\alpha_H(X)}
  \leq     \frac{K_{1, \alpha}}{ (1-\alpha)\theta -1} \|h\|_H^{ 1/ \theta   + \alpha  }  [f]_{C^\alpha_H(X)}. $$
Summing up, we obtain   $u\in C^{\alpha + 1/\theta}_H(X)$, and 
$$[u]_{C^{\alpha + 1/\theta}_H(X)} \leq  \bigg(M^{\alpha} +  \frac{K_{1, \alpha}}{ (1-\alpha)\theta -1}\bigg)   [f]_{C^\alpha_H(X)}. $$
This estimate, together with \eqref{stimasup}, yield    \eqref{maggSchauder_ell}, with $C(\lambda)= M^{\alpha} +  K_{1, \alpha}/((1-\alpha)\theta -1) + 1/\lambda$, in the case that $\alpha + 1/\theta <1$.

For $n=[ \alpha + 1/\theta ] \geq 1$ the procedure is similar, just with different notations and constants. We already know from Proposition \ref{Pr:nonoptimal} that $u\in C^n_H(X)$, and we have to show that $D^n_Hu \in C^{\alpha + 1/\theta -n}_H(X, \mathcal L^n(H))$, with $\alpha=0$ as far as  statement (i) is concerned, and $\alpha \in (0,1)$ as far as statement (ii) is concerned. 

For every fixed $h$, $h_1, \ldots , h_n\in H$ we split $D^n_Hu(y)(h_1, \ldots, h_n) = a_h(y) + b_h(y)$, where now
\begin{equation}
\label{a,b}
\left\{ \begin{array}{lll}
a_h(y) &: = & \ds  \int_0^{\|h\|^{1/\theta}_H} e^{-\lambda t} D^n_HP_tf(y)(h_1, \ldots, h_n) \,dt, \quad y\in X, 
\\
\\
b_h(y) &: =  & \ds \int_{\|h\|^{1/\theta}_H}^{\infty} e^{-\lambda t} D^n_HP_tf(y)(h_1, \ldots, h_n) \,dt, \quad y\in X. 
\end{array}\right. 
\end{equation}
Let  us prove that statement (i) holds. In this case we have  $f\in C_b(X)$, $n\theta \in (0, 1)$, $(n+1)\theta >1$. 
Recalling that $ \omega  - \lambda<0$, estimate \eqref{nth-derivatives} yields
$$\begin{array}{lll}
| a_h(x+h) - a_h(x) | & \leq & | a_h(x+h)| + |a_h(x) | \leq  \ds 2 
K_{n } \int_0^{\|h\|^{1/\theta}_H}\frac{e^{( \omega  - \lambda )t}}{t^{n\theta}} dt \prod_{j=1}^n\|h_j\|_H \|f\|_{\infty}
\\
\\
& \leq  & \ds \frac{2K_{n } }{ 1-n\theta} \|h\|_H^{(1-n\theta)/\theta} \prod_{j=1}^n\|h_j\|_H  \|f\|_{\infty}. 
\end{array}$$
To estimate $| b_h(x+h) - b_h(x) | $ we apply \eqref{teofondcalc}(ii) to the function $P_tf$, and using   \eqref{nth-derivatives} we get 
\begin{equation}
\label{eq:3}
\|D_H^n P_tf(x+h) - D_H^nP_tf(x)\|_{\mathcal L^n(H)} \leq \sup_{y\in X} \|D_H^{n+1}P_tf(y)\|_{\mathcal L^n(H)} \|h\|_H
\leq K_{n+1 } \frac{  e^{\omega  t} }{  t^{(n+1 )\theta}   } \|f\|_{\infty} \|h\|_H , 
\end{equation}
which yields (since $\omega  -\lambda<0$)
$$ 
| b_h(x+h) - b_h(x) |   \leq    \int_{\|h\|^{1/\theta}_H}^{\infty}  \frac{K_{n+1 }\|h\|_H }{t^{(n+1 )\theta}} dt  \prod_{j=1}^n\|h_j\|_H \|f\|_{\infty}
 \leq   \frac{K_{n+1 } \|h\|_H^{ 1/ \theta - n  } }{ (n+1 )\theta -1}\prod_{j=1}^n\|h_j\|_H \|f\|_{\infty}. 
$$
Summing up we get
\begin{equation}
 \label{stimaHolder0}
| (D^n_Hu(x +h) -  D^n_Hu(x))(h_1, \ldots, h_n)| \leq C_1 \|h\|_H^{1/\theta -n   } \prod_{j=1}^n\|h_j\|_H \|f\|_{\infty} 
  \end{equation}
with 
$$C_1 =  \frac{2K_{n } }{ 1-n\theta} +  \frac{K_{n+1 }}{ (n+1 )\theta -1}. $$
Therefore, $D^n_Hu \in C^{1/\theta -n  }_H(X; \mathcal L^n(H))$ and 
$[D^n_Hu]_{C^{1/\theta -n  }_H(X; \mathcal L^n(H))} \leq C_1 \|f\|_{\infty}$. This estimate and \eqref{stimaC^n0} give \eqref{maggSchauder_ell0} for $n\geq 1$. 

Let us prove that statement (ii) holds. Now we have  $f\in C^{\alpha}_H(X)$ with  $\alpha \in (0, 1)$, $(n-\alpha)\theta \in (0, 1)$, $(n+1-\alpha)\theta >1$.  
Estimate \eqref{nth-derivativesHolder} yields
$$\begin{array}{lll}
| a_h(x+h) - a_h(x) | & \leq & | a_h(x+h)| + |a_h(x) | \leq  \ds 2 
K_{n, \alpha} \int_0^{\|h\|^{1/\theta}_H}\frac{e^{(    \omega -\lambda )t}}{t^{(n-\alpha)\theta}} dt \prod_{j=1}^n\|h_j\|_H [f]_{C^\alpha_H(X)}
\\
\\
& = & \ds \frac{2K_{n, \alpha} }{ 1-(n-\alpha)\theta} \|h\|_H^{(1-(n-\alpha)\theta)/\theta} \prod_{j=1}^n\|h_j\|_H [f]_{C^\alpha_H(X)}. 
\end{array}$$
To estimate $| b_h(x+h) - b_h(x) | $ we use again \eqref{teofondcalc}(ii) and by   \eqref{nth-derivatives} we get 
\begin{equation}
\label{eq:4}
\|D_H^n P_tf(x+h) - D_H^nP_tf(x)\|_{\mathcal L^n(H)} \leq \sup_{y\in X} \|D_H^{n+1}P_tf(y)\|_{\mathcal L^n(H)} \|h\|_H
\leq\frac{  K_{n+1, \alpha }  e^{ \omega  t} }{  t^{(n+1 -\alpha)\theta}   }  [f]_{C^\alpha_H(X)} \|h\|_H , 
\end{equation}
which yields 
$$\begin{array}{lll}
| b_h(x+h) - b_h(x) | & \leq &    \ds   \int_{\|h\|^{1/\theta}_H}^{\infty}  \frac{K_{n+1, \alpha }}{t^{(n+1-\alpha)\theta}} dt\,\|h\|_H
 \prod_{j=1}^n\|h_j\|_H  [f]_{C^\alpha_H(X)}
\\
\\
& \leq  & \ds \frac{K_{n+1 }}{ (n+1 -\alpha)\theta -1} \|h\|_H^{ 1/ \theta - n  +\alpha } \prod_{j=1}^n\|h_j\|_H  [f]_{C^\alpha_H(X)}. 
\end{array}$$
Summing up we get
\begin{equation}
 \label{stimaHolder}
| (D^n_Hu(x +h) -  D^n_Hu(x))(h_1, \ldots, h_n)| \leq C_2 \|h\|_H^{1/\theta -n +\alpha } \prod_{j=1}^n\|h_j\|_H  [f]_{C^\alpha_H(X)}
  \end{equation}
with 
$$C_2 =  \frac{2K_{n, \alpha} }{ 1-(n-\alpha)\theta} +  \frac{K_{n+1, \alpha}}{ (n+1-\alpha)\theta -1}. $$
Therefore, $D^n_Hu \in C^{1/\theta -n  +\alpha}_H(X; \mathcal L^n(H))$ and 
$[D^n_Hu]_{C^{1/\theta -n  +\alpha}_H(X; \mathcal L^n(H))} \leq C_2  [f]_{C^\alpha_H(X)}$. This estimate and \eqref{stimaC^n} give \eqref{maggSchauder_ell} in the case $n\geq 1$. 
 \end{proof}

If $  1/\theta =k \in \N$ we do not expect that $u\in C^{ 1/\theta}_H(X)$ whenever $f\in C_b(X)$. The simplest counterexample is $X=H= \R^N$ with $N>1$, $L=\Delta$. In this case \eqref{fundamental} is satisfied with $\theta = 1/2$ (see Sect. \ref{subs:Laplacian}) and it is well known that  the equation $\lambda u -\Delta u =f$ has not solutions in $C^2_b(\R^N)$ (and even not in $C^{1}_b(\R^N)$ with Lipschitz gradient) for every $f\in C_b(\R^N)$. The best regularity result in this scale of spaces is  in Zygmund spaces.

\begin{Theorem}
\label{Zygmund_ell}
 Let $\lambda >0$, $f\in C_b(X)$ and let $u=R(\lambda, L)f$. Then
  \begin{itemize}
 \item[(i)] If $1/\theta = k\in \N$,  $u \in Z^{k}_H(X)$, and there exists $C= C(\lambda )>0$, independent of $f$, such that 
 \begin{equation}
 \label{maggZygmund_ell0}
 \|u\|_{ Z^{k}_H(X)} \leq C  \|f\|_{\infty}. 
 \end{equation}
  \item[(ii)] 
If  $\alpha\in (0,1)$ and  $\alpha + 1/\theta = k \in \N$,   for every  $f\in C^{\alpha}_H(X)$ the function $u$   belongs to $Z^{k}_H(X)$, and there exists $C= C(\lambda, \alpha)>0$, independent of $f$, such that 
 \begin{equation}
 \label{maggZygmund_ell}
 \|u\|_{ Z^{k}_H(X)} \leq C  \|f\|_{ C^{\alpha }_H(X)}. 
 \end{equation}
\end{itemize}
 \end{Theorem}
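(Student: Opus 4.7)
The strategy mirrors that of Theorem \ref{Th:Schauder_ell}, but with the first-order Hölder difference $\psi(x+h)-\psi(x)$ replaced everywhere by the second-order Zygmund difference $\psi(x+2h)-2\psi(x+h)+\psi(x)$. This forces me to invoke one more derivative of $P_t f$ in the large-$t$ regime, via Lemma \ref{Le:H-Holder}(iii) in place of \eqref{teofondcalc}. I first reduce to $\lambda>\omega$ by the perturbation trick of the second step in the proof of Proposition \ref{Pr:nonoptimal}. Since $k-1 < 1/\theta$ in case (i) and $k-1 < \alpha + 1/\theta$ in case (ii), Proposition \ref{Pr:nonoptimal} already yields $u \in C^{k-1}_H(X)$ with the correct norm estimate, so it only remains to control $[D^{k-1}_H u]_{Z^1_H(X,\mathcal{L}^{k-1}(H))}$ (or, when $k=1$, $[u]_{Z^1_H(X)}$).

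Fix $x \in X$ and $h, h_1,\ldots,h_{k-1} \in H$, set $t_0 := \|h\|_H^{1/\theta}$, and write the Zygmund increment
\[
\Delta := \bigl[D^{k-1}_H u(x+2h) - 2 D^{k-1}_H u(x+h) + D^{k-1}_H u(x)\bigr](h_1,\ldots,h_{k-1}) = \Delta_1 + \Delta_2,
\]
where $\Delta_1$ and $\Delta_2$ are the integrals of the corresponding second difference of $D^{k-1}_H P_t f$ against $e^{-\lambda t}\,dt$ on $(0,t_0]$ and on $[t_0,\infty)$ respectively. For $\Delta_1$ I dominate the integrand by the sum of three terms, each controlled by \eqref{nth-derivatives} in case (i), or by \eqref{nth-derivativesHolder} in case (ii), with $n=k-1$; this gives an integrand of order $t^{-(k-1-\alpha)\theta}e^{(\omega-\lambda)t}$ (with $\alpha=0$ in (i)), times $\|f\|_\infty$ (resp.\ $[f]_{C^\alpha_H(X)}$) and $\prod_j \|h_j\|_H$. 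The critical identity $(k-\alpha)\theta=1$ gives $1-(k-1-\alpha)\theta=\theta$, so integration up to $t_0$ contributes a factor $t_0^\theta = \|h\|_H$, as required.

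For $\Delta_2$ I apply Lemma \ref{Le:H-Holder}(iii) to $\varphi(y) := D^{k-1}_H P_t f(y)(h_1,\ldots,h_{k-1})$, whose second $H$-derivative at $y$ is $D^{k+1}_H P_t f(y)(h_1,\ldots,h_{k-1},\,\cdot\,,\,\cdot\,)$. Estimate \eqref{nth-derivatives} (resp.\ \eqref{nth-derivativesHolder}) with $n=k+1$ then bounds the integrand by a constant times $e^{\omega t}t^{-(k+1-\alpha)\theta}\|h\|_H^2\prod_j\|h_j\|_H$; since $(k+1-\alpha)\theta = 1+\theta$, integrating on $[t_0,\infty)$ yields $(1/\theta)t_0^{-\theta} = (1/\theta)\|h\|_H^{-1}$, so multiplication by $\|h\|_H^2$ again produces $\|h\|_H$. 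Summing the two pieces gives the Zygmund estimate, which together with the already established $C^{k-1}_H(X)$-bound yields \eqref{maggZygmund_ell0} and \eqref{maggZygmund_ell}.

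The only subcase that escapes this uniform scheme is $k=1$ (which can occur only in case (ii), since $\alpha + 1/\theta = 1$), because Proposition \ref{stimesgrHolder} is stated only for $n \ge 1$. I handle it by direct arguments: for $\Delta_1$ I use the Hölder bound \eqref{k=0} for $P_t f$ itself, so the integrand is at most $2 M^\alpha e^{\alpha \omega t}[f]_{C^\alpha_H(X)}\|h\|_H^\alpha$ and integration over $(0,t_0]$ contributes $\|h\|_H^{\alpha + 1/\theta} = \|h\|_H$; for $\Delta_2$ I apply Lemma \ref{Le:H-Holder}(iii) directly to $\varphi = P_t f$ and estimate its second $H$-derivative via \eqref{nth-derivativesHolder} with $n = 2$. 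The only part that is more than bookkeeping is the verification of the two arithmetic identities $1 - (k-1-\alpha)\theta = \theta$ and $(k+1-\alpha)\theta = 1 + \theta$, which, under the critical constraint $\alpha + 1/\theta = k$, are exactly what makes both the small-$t$ and large-$t$ contributions to $|\Delta|$ come out of the correct Zygmund order $\|h\|_H$.
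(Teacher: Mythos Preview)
Your proof is correct and follows essentially the same route as the paper's: reduce to $\lambda>\omega$, invoke Proposition~\ref{Pr:nonoptimal} for the $C^{k-1}_H$ part, split the Zygmund increment of $D^{k-1}_H u$ at $t_0=\|h\|_H^{1/\theta}$, bound the small-$t$ piece by four copies of $\|D^{k-1}_H P_t f\|$ via \eqref{nth-derivatives}/\eqref{nth-derivativesHolder} with $n=k-1$, and bound the large-$t$ piece via \eqref{Zygmund2} and the $(k{+}1)$-st derivative estimate. One slip: $k=1$ \emph{can} occur in case~(i) as well (namely when $\theta=1$), and there your ``uniform scheme'' formally invokes \eqref{nth-derivatives} with $n=k-1=0$, which is not stated; this is harmless since the required $n=0$ bound is just the contraction estimate \eqref{contrazione}, but your parenthetical ``which can occur only in case~(ii)'' is incorrect. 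The paper treats both $k=1$ subcases separately for exactly this bookkeeping reason.
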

 \begin{proof}
We proceed as in the proof of Theorem \ref{Th:Schauder_ell}, with due modifications. So, it is enough to prove that the statement holds if $\lambda >\omega$. The case where $\omega >0$ and $\lambda\in (0, \omega]$ will follow as in Step 2 of Proposition \ref{Pr:nonoptimal}. 
 
First we prove statements (i) and (ii) in the case $k=1$. 

We already know that  $u\in C_b(X)$, with $\|u\|_{\infty} \leq \|f\|_{\infty}/\lambda$. To show that 
 $u\in Z^1_H(X)$, for every fixed $h\in H$  we consider again  the functions $a_h$ and $b_h$ defined in \eqref{a,b0}, such that $u= a_h+b_h$. 

Let us prove statement (i), in the case $\theta =k=1$. For every $x\in X$ we have
  $$\begin{array}{lll}
 | a_h(x+2h) - 2 a_h(x+h) + a_h(x)| &  \leq & \ds  \int_0^{\|h\|_H} e^{-\lambda t} |P_tf(x+2h) - 2 P_tf(x+h) +  P_tf(x )|\,dt
\\
\\
&  \leq  &  \ds 4 \int_0^{\|h\|_H}  e^{-\lambda t}   \|f\|_{\infty} dt = 4 \|h\|_H   \|f\|_{\infty}. 
\end{array}$$   
To estimate  $b_h(x+2h) - 2 b_h(x+h) + b_h(x)$ we use \eqref{teofondcalc} twice, that gives
$$\begin{array}{l}
 P_tf(x+2h) - 2 P_tf(x+h) +  P_tf(x )= \ds  \int_0^1 D_HP_tf(x+(1+\sigma)h)(h)d\sigma -  \int_0^1 D_HP_tf(x+ \sigma h)(h)d\sigma 
\\
\\
= \ds \int_0^1 \int_0^1 D^2_HP_tf(x+(\tau +\sigma)h)(h,h)d\tau \,d\sigma
\end{array}$$
so that, by \eqref{nth-derivatives} with $n=2$, 
\begin{equation}
\label{perZ0}
|P_tf(x+2h) - 2 P_tf(x+h) +  P_tf(x )| \leq \sup_{y\in X}\|D^2_HP_tf(y)\|_{\mathcal L^2(H)}\|h\|^2_H\leq K_2\frac{e^{  \omega t}}{t^{2 }} \|f\|_{\infty} \|h\|^2_H. 
\end{equation}
Therefore, 
$$\begin{array}{lll}
|b_h(x+2h) - 2 b_h(x+h) + b_h(x)|
& \leq  & \ds \int_{\|h\|_H}^{\infty} e^{-\lambda t} |P_tf(x+2h) - 2 P_tf(x+h) +  P_tf(x )|\,dt
\\
\\
 & \leq  & \ds \int_{\|h\|_H}^{\infty} e^{-\lambda t} K_2\frac{e^{  \omega t}}{t^{2 }} \|f\|_{\infty}   \|h\|_H^2dt   
 \leq K_2\|h\|_H   \|f\|_{\infty}. 
\end{array}$$
Summing up, 
$$|u(x+2h) - 2 u(x+h) + u(x)| \leq (4+K_2)\|h\|_H   \|f\|_{\infty}, $$
so that $u\in Z^1_H(X)$ and \eqref{maggZygmund_ell0} holds with $C = 1/\lambda + 4+K_2$. So, statement (i) is proved for $\theta =1$. 
Concerning statement (ii),  when $\alpha + 1/\theta =1$ and  $f\in C^{\alpha}_H(X)$   we have by    \eqref{k=0}   
$$\begin{array}{l}
| a_h(x+2h) - 2 a_h(x+h) + a_h(x)|
\\
\\
\ds \leq  \int_0^{\|h\|_H^{1/\theta}} e^{-\lambda t} ( |P_tf(x+2h) - P_tf(x+h)|  + |P_tf(x+h) - P_tf(x )|)\,dt
\\
\\
\ds \leq 2 \int_0^{\|h\|_H^{1/\theta}}  e^{-\lambda t} M^{\alpha} e^{\alpha \omega t} [f]_{C^{\alpha}_H(X)}\|h\|_H^{\alpha}dt \leq  2 M^{\alpha}  \|h\|_H [f]_{C^{\alpha}_H(X)}
\end{array}$$
while \eqref{perZ0} has to be replaced (using \eqref{nth-derivativesHolder} with $n=2$) by 
\begin{equation}
\label{eq:5}
|P_tf(x+2h) - 2 P_tf(x+h) +  P_tf(x )| \leq \sup_{y\in X}\|D^2_HP_tf(y)\|_{\mathcal L^2(H)}\|h\|^2_H\leq 
\frac{K_{2, \alpha}e^{  \omega t}}{t^{(2-\alpha)\theta }}  [f]_{C^{\alpha}_H(X)} \|h\|^2_H, 
\end{equation}
and therefore, recalling that $(2-\alpha)\theta = 1+\theta$, 
$$\begin{array}{l}
|b_h(x+2h) - 2 b_h(x+h) + b_h(x)|
\ds \leq  \int_{\|h\|_H^{1/\theta}}^{\infty} e^{-\lambda t} |P_tf(x+2h) - 2 P_tf(x+h) +  P_tf(x )|\,dt
\\
\\
\ds \leq  \int_{\|h\|_H^{1/\theta}}^{\infty} e^{-\lambda t} K_{2, \alpha}\frac{e^{ \omega t}}{t^{(2-\alpha)\theta }}  [f]_{C^{\alpha}_H(X)} \|h\|^2_Hdt
 \leq \frac{ K_{2, \alpha}}{\theta} \|h\|_H [f]_{C^{\alpha}_H(X)}.  
\end{array}$$
Summing up, 
$$|u(x+2h) - 2 u(x+h) + u(x)| \leq \left( 2 M^{\alpha} + \frac{ K_{2, \alpha}}{\theta}\right)\|h\|_H  [f]_{C^{\alpha}_H(X)}, $$
so that $u\in Z^1_H(X)$ and \eqref{maggZygmund_ell} holds with $C = 1/\lambda +  2 M^{\alpha} +  K_{2, \alpha}/\theta$. So, statement (ii) is proved for $\alpha +1/\theta =1$. 

In the case that $k>1$ (we recall that $k= 1/\theta$ in statement (i), $k= \alpha + 1/\theta$ in statement (ii)), we know from Proposition \ref{Pr:nonoptimal} that $u\in C^{k-1}_H(X)$ and that estimates  \eqref{C^n0}, \eqref{C^nalpha} hold with $n=k-1$. What we have to prove is that $D^{k-1}_Hu \in 
 Z^1(X, \mathcal L^{k-1}(H))$, and to estimate its $Z^1$ norm in terms of $f$. To this aim, fixed any $h, h_1, \ldots, h_{k-1}\in H$, for every $y\in X$  we split 
 $D^{k -1}_Hu(y)(h_1, \ldots, h_{k-1})$ as $a_h(y) + b_h(y)$, where now
 \begin{equation}
\label{a,bZygmund}
\begin{array}{lll}
a_h(y) &: = & \ds  \int_0^{\|h\|^{1/\theta}_H} e^{-\lambda t} D^{k-1}_HP_tf(y)(h_1, \ldots, h_{   k-1   }) \,dt, \quad y\in X, 
\\
\\
b_h(y) &: =  & \ds \int_{\|h\|^{1/\theta}_H}^{\infty} e^{-\lambda t} D^{k-1}_HP_tf(y)(h_1, \ldots,  h_{   k-1   }) \,dt, \quad y\in X. 
\end{array}
\end{equation}
So, for every $x\in X$  we have 
\begin{equation}
\label{quelcheserve}
\begin{array}{l}
|(D^{k-1}_Hu(x+2h)- 2D^{k-1}_Hu(x+h) + D^{k-1}_Hu(x ))(h_1, \ldots, h_{k-1}) | \leq  
\\
\\
= |a_h(x+2h) - 2 a_h(x+h) + a_h(x)| + 
| b_h(x+2h) - 2 b_h(x+h) + b_h(x)|. 
 \end{array}
 \end{equation}
By the definition of $a_h$ we get
\begin{equation}
\label{|a_h|}
\begin{array}{l}
| a_h(x+2h) - 2 a_h(x+h) + a_h(x)|
\\
\\
\ds \leq  \int_0^{\|h\|_H^{1/\theta}} e^{-\lambda t} |(D^{k-1}_HP_tf(x+2h) - 2D^{k-1}_HP_tf(x+h) + D^{k-1}_HP_tf(x ))(h_1, \ldots, h_{k-1})\,dt , \end{array}
\end{equation}
To estimate the right-hand side  we observe that 
$$\begin{array}{l}
 |(D^{k-1}_HP_tf(x+2h) - 2D^{k-1}_HP_tf(x+h) + D^{k-1}_HP_tf(x ))(h_1, \ldots, h_{k-1})| 
 \\
 \\
 \leq 4 \sup_{y\in X} \|D^{k-1}_HP_tf(y)\|_{\mathcal L^{k-1}(H)}
 \prod_{j=1}^{k-1}\|h_j\|_H, \end{array}$$
which is bounded by $4 K_{k-1}  e^{ \omega t} t^{-(k-1)\theta} \prod_{j=1}^{k-1}\|h_j\|_H \|f\|_{\infty}$ thanks to 
 \eqref{nth-derivatives},  and by $4 K_{k-1, \alpha }$ $ e^{ \omega t}$ $ t^{-(k-1-\alpha )\theta} \prod_{j=1}^{k-1}\|h_j\|_H [f]_{C^{\alpha}_H(X)}$ thanks to   \eqref{nth-derivativesHolder} if $f\in C^{\alpha}_H(X)$ with $\alpha \in (0, 1)$. 
Therefore,  the right-hand side of \eqref{|a_h|} is bounded by
$$ \int_0^{\|h\|_H^{1/\theta}}\frac{ 4 K_{k-1}}{t^{(k-1)\theta}}dt \prod_{j=1}^{k-1}\|h_j\|_H \|f\|_{\infty}  =  4 kK_{k-1}   \|h\|\prod_{j=1}^{k-1}\|h_j\|_H \|f\|_{\infty}$$ %
if $k = 1/\theta$, and by 
$$\int_0^{\|h\|_H^{1/\theta}}  \frac{4K_{k-1, \alpha} }{t^{(k-1 -\alpha)\theta}}dt \prod_{j=1}^{k-1}\|h_j\|_H [f]_{C^{\alpha}_H(X)} = 4(k-\alpha) K_{k-1, \alpha}
 \|h\|_H \prod_{j=1}^{k-1}\|h_j\|_H [f]_{C^{\alpha}_H(X)}, $$
if  $ f\in C^{\alpha}_H(X)$ with $\alpha \in (0, 1)$ and 
$k= \alpha + 1/\theta$. 
Moreover, by the definition of $b_h$ we get 
\begin{equation}
\label{|b_h|}
\begin{array}{l}
| b_h(x+2h) - 2 b_h(x+h) + b_h(x)|
\\
\\
\ds \leq  \int_ {\|h\|_H^{1/\theta}}^{\infty}  e^{-\lambda t} |(D^{k-1}_HP_tf(x+2h) - 2D^{k-1}_HP_tf(x+h) + D^{k-1}_HP_tf(x ))(h_1, \ldots, h_{k-1})\,dt. 
\end{array}
\end{equation}
To estimate the right-hand side  we recall that for every $t>0$, $x\in X$, $h \in H$,  by \eqref{Zygmund2}  we have 
$$ \begin{array}{l}
|(D^{k-1}_HP_tf(x+2h) - 2D^{k-1}_HP_tf(x+2h) + D^{k-1}_HP_tf(x ))(h_1, \ldots, h_{k-1}) |
\\
\\
\leq \sup_{y\in X} \|D^{k+1}_HP_tf(y)\|_{\mathcal L^2(H)} \|h\|^2_H\prod_{j=1}^{k-1}\|h_j\|_H\end{array}$$
which is respectively bounded by $K_{k+1}  e^{  \omega t}  t^{-(k+1)\theta} \|h\|^2_H\prod_{j=1}^{k-1}\|h_j\|_H\|f\|_{\infty}$ due to   \eqref{nth-derivatives}, and by 
$K_{k+1, \alpha}  e^{  \omega t}  t^{-(k+1-\alpha )\theta} \|h\|^2_H\prod_{j=1}^{k-1}\|h_j\|_H[f]_{C^{\alpha}_H(X)}$ if  $f\in C^{\alpha}_H(X)$, due to \eqref{nth-derivativesHolder}. 
Therefore, the right-hand side of \eqref{|b_h|}  is bounded by 
$$ \int_ {\|h\|_H^{1/\theta}}^{\infty} \frac{  K_{k+1}}{t^{(k+1)\theta}}dt \|h\|^2_H \prod_{j=1}^{k-1}\|h_j\|_H \|f\|_{\infty} = k K_{k+1}  \|h\|\prod_{j=1}^{k-1}\|h_j\|_H \|f\|_{\infty}, $$
if $k = 1/\theta$, and by 
$$ \int_ {\|h\|_H^{1/\theta}}^{\infty}  \frac{K_{k+1, \alpha}}{t^{(k+1-\alpha)\theta}}dt \|h\|^2_H \prod_{j=1}^{k-1}\|h_j\|_H  [f]_{C^{\alpha}_H(X)} = (k-\alpha) K_{k+1, \alpha}
 \|h\|_H \prod_{j=1}^{k-1}\|h_j\|_H [f]_{C^{\alpha}_H(X)}, $$
if  $ f\in C^{\alpha}_H(X)$ with $\alpha \in (0, 1)$ and 
$k= \alpha + 1/\theta$. 
Summing up,  the left-hand side of \eqref{quelcheserve} is bounded by 
 $$k(4  K_{k-1} + K_{k+1})\prod_{j=1}^{k-1}\|h_j\|_H \|f\|_{\infty} \|h\|, $$
 if $1/\theta = k$, and by  
 $$(k-\alpha)(4  K_{k-1, \alpha} + K_{k+1, \alpha})\prod_{j=1}^{k-1}\|h_j\|_H  [f]_{C^{\alpha}_H(X)} \|h\|, $$
 if  $f\in C^{\alpha}_H(X)$ with $\alpha \in (0, 1)$ and $ \alpha + 1/\theta = k$. In both cases, this implies that 
 $D^{k-1}u \in Z^1_H(X, \mathcal L^{k-1}(H))$  (so that $u\in Z^{k}_H(X)$) with Zygmund seminorm bounded by $k(4  K_{k-1} + K_{k+1})  \|f\|_{\infty}  $in the first case,  and by    $(k-\alpha)(4  K_{k-1, \alpha} + K_{k+1, \alpha})  [f]_{C^{\alpha}_H(X)} $,    in the second case. Such estimates and \eqref{C^n0}, \eqref{C^nalpha} with $n=k-1$ yield \eqref{maggZygmund_ell0} and \eqref{maggZygmund_ell}, respectively. 
 \end{proof}

\subsection{Schauder and Zygmund estimates: evolution equations}

 This section deals with mild solutions to Cauchy problems, 
\begin{equation}
\label{Cauchy}
\left\{ \begin{array}{l}
v_t(t,x)  = Lv(t,x)  + g(t,x), \quad t\in [0,T], \; x\in X, 
\\
\\
v(0, \cdot) = f, \end{array}\right. 
\end{equation}
where $L$ is the operator defined in \eqref{L}, and $f   :X     \mapsto \R$, $g:[0,T]\times X\mapsto \R$ are bounded  continuous functions. Mild solutions are defined by 
\begin{equation}
\label{v}
v(t,x) = P_tf(x) + \int_0^t P_{t-s}g(s, \cdot)(x)ds , \quad  t\in [0,T], \; x\in X. 
\end{equation}
We already know that $(t,x)\mapsto P_tf(x)$ is continuous and bounded in $[0, +\infty)\times X$; if in addition $f\in C^n_H(X)$ for some $n\in \N$ all the derivatives $\partial ^k/\partial h_1\ldots\partial h_k (P_tf)$  with $k\leq n$ enjoy the same property, by Proposition \ref{representation}.  
We still have to  study the function
\begin{equation}
\label{v0}
v_0(t,x) := \int_0^t P_{t-s}g(s, \cdot)(x)ds =\int_0^t P_{s}g(t-s, \cdot)(x)ds, \quad  t\in [0,T], \; x\in X, 
\end{equation}
with $g\in C_b([0,T]\times X)$. Our final aim are maximal regularity results 
in H\"older and Zygmund  spaces with respect to the $x$ variable, so we introduce the relevant functional spaces. 
 
\begin{Definition} 
Let $T>0$, $\alpha> 0$. We denote by $C^{0, \alpha}_H ([0,T]\times X)$ the space of the bounded continuous functions $g:[0,T]\times X\mapsto \R$ such that $g(t, \cdot)\in C^{\alpha}_H(X)$ for every $t\in [0, T]$ and 
$$\|g\|_{C^{0,\alpha}_H ([0,T]\times X)} := \sup_{t\in [0, T]} \|g(t, \cdot)\|_{C^{\alpha}_H(X)}  < +\infty, $$
and moreover, if   $\alpha \geq 1$  , for every $(h_1, \ldots, h_k)\in H^k$, with $k\leq [\alpha]$, the functions  $(t,x)\mapsto (\partial ^{k}g/\partial h_1 \ldots \partial h_k)(t,x)$ are continuous in $[0,   T]    \times X$. 

For $k\in \N$ we denote by $Z^{0,k}_H ([0,T]\times X)$ the space of the bounded continuous functions $g:[0,T]\times X\mapsto \R$ such that $g(t, \cdot)\in Z^{k}_H(X)$ for every $t\in [0, T]$ and 
$$\|g\|_{Z^{0, k}_H ([0,T]\times X)} := \sup_{t\in [0, T]} \|g(t, \cdot)\|_{Z^{k}_H(X)}   < +\infty, $$
and, if $k\geq 2$, $g\in C^{0,k-1}_H ([0,T]\times X)$. 

If $H=X$ we drop the subindex $H$, setting $C^{0, \alpha}_b ([0,T]\times X):= C^{0, \alpha}_X ([0,T]\times X)$, $Z^{0,k} _b([0,T]\times X):= Z^{0,k}_{  X   } ([0,T]\times X)$. 
\end{Definition}

The next proposition is the evolution counterpart of Proposition  \ref{Pr:nonoptimal}. 

 \begin{Proposition}
 \label{Pr:nonoptimal_par}
For every $g\in C_b([0,T]\times X)$ the function $v_0$ defined in \eqref{v0}
is continuous, and we have 
\begin{equation}
\label{stimasup_par} 
\|v_0\|_{\infty}  \leq T\|g\|_{\infty}. 
\end{equation}
Moreover the following statements hold. 
 \begin{itemize}
 \item[(i)] Let $\theta <1$. For every $n\in \N$ such that $n<1/\theta$, $v_0\in C^{0,n}_H([0,T]\times X)$. There exists $C=C(T)>0$, independent of $g$, such that 
 \begin{equation}
 \label{C^n0_par}
 \|v_0\|_{C^{0,n}_H([0,T]\times X)}\leq C\|g\|_{\infty}.
 \end{equation}
  \item[(ii)] Let $\alpha\in (0,1)$ be such that $\alpha + 1/\theta >1$. For every $f\in C^{\alpha}_H(X)$ and for every $n\in \N$ such that $n<\alpha +1/\theta$, $v_0\in C^{0,n}_H([0,T]\times X)$. There exists $C= C(T, \alpha)>0$, independent of $g$, such that 
 \begin{equation}
 \label{C^nalpha_par}
 \|v_0\|_{C^{0,n}_H([0,T]\times X)}\leq C\|g\|_{C^{0,\alpha}_H([0,T]\times X)}
 \end{equation}
\end{itemize}
\end{Proposition}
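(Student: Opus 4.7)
The approach directly parallels that of Proposition \ref{Pr:nonoptimal}: I would differentiate under the integral sign in \eqref{v0} using the smoothing estimates \eqref{nth-derivatives} and \eqref{nth-derivativesHolder}, and verify integrability of the resulting singular kernel near $s=t$. The role played there by the exponential weight $e^{-\lambda t}$ (which made the integrals converge at infinity) is here played by the finite time horizon $T$; the singularity at the lower endpoint of the resolvent integral is replaced by a singularity at the upper endpoint $s=t$ of the Duhamel integral, but it has exactly the same order, so the same integrability threshold appears.

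Continuity of $v_0$ on $[0,T]\times X$ and the bound $\|v_0\|_\infty \leq T\|g\|_\infty$ follow at once from \eqref{contrazione} together with dominated convergence, using the joint continuity of $(t,s,x)\mapsto P_{t-s}g(s,\cdot)(x)$ on $\{(t,s):0\leq s<t\leq T\}\times X$ granted by Proposition \ref{representation}(iii) and the continuity of $g$. For statement (i), I would fix $n\in\N$ with $n\theta<1$ and, for each $k\in\{1,\dots,n\}$, apply \eqref{nth-derivatives} to $g(s,\cdot)\in C_b(X)$ to obtain, for every $s<t$ and $h_1,\dots,h_k\in H$,
$$\|D^k_HP_{t-s}g(s,\cdot)(x)\|_{\mathcal L^k(H)} \leq K_k\frac{e^{\omega(t-s)}}{(t-s)^{k\theta}}\|g\|_\infty\prod_{j=1}^k\|h_j\|_H.$$
Since $k\theta<1$, the right-hand side is integrable in $s$ on $[0,t]$. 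A standard dominated-convergence argument applied inductively to the difference quotients (in complete analogy with the inductive proof of Proposition \ref{representation}(ii)) then shows that $v_0(t,\cdot)$ is $k$ times $H$-Gateaux differentiable at every $x\in X$, with
$$D^k_Hv_0(t,x)(h_1,\dots,h_k) = \int_0^t D^k_HP_{t-s}g(s,\cdot)(x)(h_1,\dots,h_k)\,ds,$$
and the pointwise bound
$$\|D^k_Hv_0(t,x)\|_{\mathcal L^k(H)} \leq K_k\|g\|_\infty \int_0^t \frac{e^{\omega(t-s)}}{(t-s)^{k\theta}}\,ds \leq \frac{K_k e^{\omega^+ T}T^{1-k\theta}}{1-k\theta}\|g\|_\infty,$$
which combined with \eqref{stimasup_par} yields \eqref{C^n0_par}. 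Joint continuity of $(t,x)\mapsto D^k_Hv_0(t,x)(h_1,\dots,h_k)$ on $[0,T]\times X$ then follows from dominated convergence after the change of variable $s\mapsto t-s$, which moves the singularity to the fixed endpoint $0$ and makes the $t$-dependence of the integration domain harmless.

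For statement (ii), I would run the identical argument with \eqref{nth-derivatives} replaced by the sharper Hölder estimate \eqref{nth-derivativesHolder} applied to $f=g(s,\cdot)\in C^\alpha_H(X)$: the exponent of the singularity becomes $(k-\alpha)\theta$ and $\|g\|_\infty$ is replaced by $[g(s,\cdot)]_{C^\alpha_H(X)}\leq \|g\|_{C^{0,\alpha}_H([0,T]\times X)}$. Integrability holds provided $(k-\alpha)\theta<1$, equivalent to $k<\alpha+1/\theta$, which is guaranteed by $k\leq n$. The resulting bound together with \eqref{stimasup_par} yields \eqref{C^nalpha_par}.

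The only genuine point requiring care is the justification of differentiation under the integral in the presence of the singular kernel at $s=t$; this is handled uniformly by the $L^1$-in-$s$ bounds above combined with dominated convergence on difference quotients, exactly as in Proposition \ref{representation}. No new ideas beyond those already used in Proposition \ref{Pr:nonoptimal} are required.
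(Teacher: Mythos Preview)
Your proposal is correct and follows essentially the same approach as the paper: differentiate under the Duhamel integral using the smoothing bounds \eqref{nth-derivatives} and \eqref{nth-derivativesHolder}, exactly paralleling Proposition~\ref{Pr:nonoptimal}, with the finite horizon $T$ replacing the exponential weight. The only substantive difference is in the verification of joint continuity in $(t,x)$ of the derivatives: the paper keeps the original variable and splits the difference $D^k_Hv_0(t,x)-D^k_Hv_0(t_0,x_0)$ into an integral over $[0,t_0]$ (handled by dominated convergence with the $L^1$ majorant $(t_0-s)^{-k\theta}$) and a remainder over $[t_0,t]$ (bounded by $C(t-t_0)^{1-k\theta}$), whereas you first change variables $s\mapsto t-s$ to fix the singularity at $0$ and then apply dominated convergence directly. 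Both work; your route is slightly cleaner once one observes that the integrand $D^k_HP_sg(t-s,\cdot)(x)(h_1,\dots,h_k)$ is continuous in $(t,x)$ for each fixed $s>0$ by the explicit formula \eqref{ident1} and the joint continuity of $g$. One minor point: the joint continuity of $(t,x)\mapsto P_tf(x)$ for $f\in C_b(X)$ is recorded at the beginning of Section~\ref{sect:schauzyg} (via \cite[Lemma 2.1]{BRS}), not in Proposition~\ref{representation}(iii), which concerns $f\in C^n_H(X)$.
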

\begin{proof}
Fix  $t$, $t_0\in [0,T]$ and  $x$, $x_0\in X$. If $t>t_0$ we have 
$$|v_0(t,x) - v_0(t_0, x_0) | \leq \int_0^{  t_0} |P_{t-s}g(s, \cdot)(x) - P_{t_0-s}g(s, \cdot)(x_0)|\,ds + \int_{t_0}^t  |P_{t-s}g(s, \cdot)(x)|\,ds . $$
Since for every $s\geq 0$ the function $(t,x)\mapsto P_{t-s}g(s, \cdot)(x)$ is continuous in $[s, +\infty)\times X$, and 
$|P_{t-s}g(s, \cdot)(x) - P_{t_0-s}g(s, \cdot)(x_0)|\leq 2\|g\|_{\infty}$, by the Dominated Convergence Theorem the first integral vanishes as $t\to t_0$, $x\to x_0$. The second integral is bounded by $(t-t_0)\|g\|_{\infty}$, so that it vanishes too as $t\to t_0$, $x\to x_0$. If $t<t_0$ we split $v_0(t,x) - v_0(t_0, x_0)= 
\int_0^{  t} (P_{t-s}g(s, \cdot)(x) - P_{t_0-s}g(s, \cdot)(x_0))\,ds + \int_{t}^{t_0}  P_{t_0-s}g(s, \cdot)(x_0)\,ds$ and we argue in the same way. So, $v_0$ is continuous. Estimate  \eqref{stimasup_par} is immediate. 

Concerning statements (i) and (ii), the proof of the fact that $v_0(t, \cdot)\in C^n_H(X)$ for every $t\in [0, T]$, and that 
$$\frac{\partial^kv_0}{\partial h_1 \ldots  \partial h_k}(t,  x)= \int_{0}^t D^k_HP_{t-s}g(s, \cdot)(x)(h_1, \ldots, h_k) \,ds , \quad k\in \{1, \ldots, n\}, t\in [0, T], \;x\in X, $$
 is quite analogous to the corresponding proof of Proposition \ref{Pr:nonoptimal},  and it is omitted. Estimates \eqref{C^n0_par} and \eqref{C^nalpha_par} follow as well as in the proof of Proposition \ref{Pr:nonoptimal}. 
 
It remains to  prove that $(t,x)\mapsto D^k_Hv_0(t, \cdot)(x)(h_1, \ldots, h_k) $ is continuous in $[0, T]\times X$ for every $ k\in \{1, \ldots, n\}$, $h_1, \ldots h_k\in H$, and this is similar to the proof of the continuity of $v_0$. For $t>t_0\in [0,T]$ and  $x$, $x_0\in X$ we split $\frac{\partial^kv_0}{\partial h_1 \ldots\partial h_k}(t,  x) - \frac{\partial^kv_0}{\partial h_1 \ldots \partial h_k}(t_0,  x_0)= I_1 + I_2$, where 
$$I_1 =  \int_{0}^{t_0} (D^k_HP_{t-s}g(s, \cdot)(x)- D^k_HP_{t_0-s}g(s, \cdot)(x_0))(h_1, \ldots, h_k) \,ds , $$
$$I_2 =\int_{t_0}^t D^k_HP_{t-s}g(s, \cdot)(x)(h_1, \ldots, h_k) \,ds. $$
Concerning $I_1$, by Proposition \ref{representation} for every $s\in [0, T]$ the function $(t,x)\mapsto D^k_HP_{t-s}g(s, \cdot)(x)(h_1, \ldots, h_k)$ is continuous in $(s, +\infty)\times X$, moreover for $0<s<t_0$ we have 
$$|D^k_HP_{t-s}g(s, \cdot)(x)- D^k_HP_{t_0-s}g(s, \cdot)(x_0))(h_1, \ldots, h_k)| \leq \varphi(s), $$
where  
$\varphi(s) = 2K_k\max\{ e^{ \omega T}, 1\}(t_0-s)^{  - k\theta   }\|g\|_{\infty}\prod_{j=1}^k\|h_j\|_H$ if $g\in C_b([0,T]\times X)$ by \eqref{nth-derivatives}, and $\varphi(s) = 2K_{k, \alpha}\max\{ e^{ \omega T}, 1\}(t_0-s)^{  -(k-\alpha)\theta    }\|g\|_{C^{0,\alpha}_H([0,T]\times X)}\prod_{j=1}^k\|h_j\|_H$  if $g\in C^{0,\alpha}_H([0,T]\times X)$ by \eqref{nth-derivativesHolder}. Both in case of statement (i) and of statement (ii), $\varphi \in L^1(0, t_0)$ and the Dominated Convergence Theorem yields that $I_1$ vanishes as $t\to t_0$, $x\to x_0$. 

Moreover  we have $| I_2|\leq  \int_{t_0}^t \psi(s)ds$, where $\psi(s) =  K_k\max\{ e^{\omega T}, 1\}(t-s)^{  - k\theta   }$ $\|g\|_{\infty}\prod_{j=1}^k\|h_j\|_H$ if $g\in C_b([0,T]\times X)$  by \eqref{nth-derivatives}, and $\psi(s) = K_{k, \alpha}\max\{ e^{\omega T}, 1\}(t-s)^{  -(k-\alpha)\theta    } \|g\|_{C^{0,\alpha}_H([0,T]\times X)}$ $\prod_{j=1}^k\|h_j\|_H$  if $g\in C^{0,\alpha}_H([0,T]\times X)$,  by \eqref{nth-derivativesHolder}. So we get $| I_2|\leq C(t-t_0)^{1-k\theta}$ in the first case, $| I_2|\leq C(t-t_0)^{1-(k-\alpha)\theta}$ in the second case;  in both cases $I_2$ vanishes as $t\to t_0$, $x\to x_0$. 

If   $t<t_0\in [0,T]$ and  $x$, $x_0\in X$ we split $D^k_Hv_0(t, \cdot)(x)(h_1, \ldots, h_k) - D^k_Hv_0(t_0, \cdot)(x_0)(h_1, \ldots, h_k)$ as above, replacing 
$\int_0^{t_0}$, $ \int_{t_0}^t $  by $\int_0^{t}$, $ \int_{t}^{t_0} $, respectively, and arguing in the same way. This ends the proof. 
\end{proof}

 \begin{Theorem}
 \label{Th:Schauder_par}
 Let  $f\in C_b(X)$, $g\in C_b([0,T]\times X)$ and let $v$ be defined by \eqref{v}. The following statements hold. 
  \begin{itemize}
 \item[(i)] If $1/\theta \notin \N$ and $f\in C^{1/\theta}_H(X)$, $g\in C_b([0,T]\times X)$,  then $v\in C^{0, 1/\theta}_H ([0,T]\times X)$. There exists $C= C(T)>0$, independent of $f$ and $g$, such that 
 \begin{equation}
 \label{maggSchauder_par0}
 \|v\|_{ C^{0, 1/\theta}_H ([0,T]\times X)} \leq C ( \|f\|_{C^{1/\theta}_H(X)} + \|g\|_{\infty}). 
 \end{equation}
\item[(ii)] If $\alpha\in (0,1)$ and  $\alpha + 1/\theta \notin \N$, $f\in C^{\alpha + 1/\theta}_H(X)$ and $g\in C^{0, \alpha}_H ([0,T]\times X)$,
   then $v\in C^{0,\alpha + 1/\theta}_H ([0,T]\times X)$. There exists $C= C(T, \alpha)>0$, independent of $f$ and $g$, such that 
 \begin{equation}
 \label{maggSchauder_par}
 \|v\|_{ C^{0,\alpha + 1/\theta}_H ([0,T]\times X)} \leq C  ( \|f\|_{ C^{\alpha + 1/\theta}_H(X)} + \|g\|_{C^{0,\alpha  }_H ([0,T]\times X)} ). 
 \end{equation}
\end{itemize}
\end{Theorem}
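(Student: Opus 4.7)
The plan is to reduce the evolution Schauder estimate to the semigroup bounds of Propositions \ref{representation} and \ref{stimesgrHolder} by mimicking the splitting argument used in Theorem \ref{Th:Schauder_ell}. First, split $v = P_{\cdot}f + v_0$ with $v_0$ defined in \eqref{v0}. For the initial datum part $P_tf$, Lemma \ref{Le:sgrHolderZygmund} immediately gives $\|P_tf\|_{C^{\alpha + 1/\theta}_H(X)} \leq c \|f\|_{C^{\alpha + 1/\theta}_H(X)}$ uniformly in $t \in [0,T]$ (absorbing an $e^{(\alpha + 1/\theta)\omega T}$ factor into $C(T)$ if $\omega > 0$), and the joint continuity in $(t,x)$ of the $H$-Gateaux derivatives up to order $n := [\alpha + 1/\theta]$ follows from Proposition \ref{representation}(iii). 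So the main work concentrates on $v_0$.

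For $v_0$, Proposition \ref{Pr:nonoptimal_par} already yields $v_0 \in C^{0,n}_H([0,T]\times X)$ with the required bound on the $C^{0,n}_H$ norm, together with joint continuity of all derivatives of order $\leq n$ in $(t,x)$. What remains is to show that, uniformly in $t \in [0,T]$, $D^n_H v_0(t,\cdot) \in C^{\alpha + 1/\theta - n}_H(X; \mathcal L^n(H))$, with the appropriate Hölder seminorm estimate. Fix $t\in[0,T]$, $x \in X$ and $h, h_1, \ldots, h_n \in H$, and set $s_0 := \min(\|h\|_H^{1/\theta}, t)$. Writing
$$
\bigl(D^n_H v_0(t,x+h) - D^n_H v_0(t,x)\bigr)(h_1,\ldots,h_n) = A(t,h) + B(t,h),
$$
where $A$ is the integral of $s \mapsto (D^n_H P_s g(t-s,\cdot)(x+h) - D^n_H P_s g(t-s,\cdot)(x))(h_1,\ldots,h_n)$ over $[0,s_0]$ and $B$ the same integrand over $[s_0, t]$, I would estimate $A$ by the triangle inequality together with the $n$-th derivative bound from Proposition \ref{stimesgrHolder}, and $B$ via \eqref{teofondcalc}(ii) combined with the $(n+1)$-th derivative bound. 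In case (ii), $A$ is controlled via \eqref{nth-derivativesHolder} by an integral of $s^{-(n-\alpha)\theta}$ over $[0,s_0]$ times $[g(t-s,\cdot)]_{C^\alpha_H(X)} \prod_j \|h_j\|_H$; since $\alpha + 1/\theta \notin \N$ forces $(n-\alpha)\theta < 1$, this integral converges and produces the factor $s_0^{1 - (n-\alpha)\theta} \leq \|h\|_H^{1/\theta - n + \alpha}$. Similarly, $B$ is bounded by $\|h\|_H$ times an integral of $s^{-(n+1-\alpha)\theta}$ over $[s_0, t]$; since $(n+1-\alpha)\theta > 1$, this integral is finite and yields $s_0^{1-(n+1-\alpha)\theta} \|h\|_H \leq \|h\|_H^{1/\theta - n + \alpha}$. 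Case (i) is strictly parallel with $\alpha = 0$ and $\|g\|_\infty$ in place of $[g(t-s,\cdot)]_{C^\alpha_H(X)}$, using \eqref{nth-derivatives} instead of \eqref{nth-derivativesHolder}.

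When $n = 0$ (which occurs if $1/\theta < 1$ in (i) or $\alpha + 1/\theta < 1$ in (ii)) one estimates $v_0(t, x+h) - v_0(t,x)$ directly by the same cut, using \eqref{k=0} on $[0, s_0]$ and \eqref{eq:1}--\eqref{eq:2} on $[s_0, t]$; the structure is identical. The main obstacle is the bookkeeping needed to marry the two competing thresholds in $s_0 = \min(\|h\|_H^{1/\theta}, t)$ into a single estimate: when $\|h\|_H^{1/\theta} \geq t$ only the $A$-piece survives and must be controlled by $t^{1-(n-\alpha)\theta} \leq \|h\|_H^{1/\theta - n + \alpha}$, while when $\|h\|_H^{1/\theta} < t$ both pieces contribute and must be combined carefully. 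Together with the $C^{0,n}_H$-estimate from Proposition \ref{Pr:nonoptimal_par} and the $P_tf$ estimate from Lemma \ref{Le:sgrHolderZygmund}, this yields \eqref{maggSchauder_par0} and \eqref{maggSchauder_par}.
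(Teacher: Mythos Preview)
Your proposal is correct and follows essentially the same route as the paper's proof: reduce to $v_0$ via Lemma \ref{Le:sgrHolderZygmund} and Proposition \ref{representation}(iii) for the $P_tf$ part, invoke Proposition \ref{Pr:nonoptimal_par} for the $C^{0,n}_H$ estimate, and then split the time integral defining $D^n_H v_0(t,\cdot)$ at $s_0 = \|h\|_H^{1/\theta}\wedge t$, estimating the short-time piece by the triangle inequality with \eqref{nth-derivatives} or \eqref{nth-derivativesHolder} and the long-time piece via \eqref{teofondcalc}(ii) with the $(n+1)$-th derivative bound. The paper's functions $a_h$, $b_h$ in \eqref{a_par}--\eqref{b_par} are precisely your $A$ and $B$, and the bookkeeping for the case $\|h\|_H^{1/\theta}\geq t$ is handled there by the $\wedge t$ in the integration limits just as you describe.
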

\begin{proof}
Both for $\alpha =0$ and for $\alpha \in (0,1)$,   for $f\in C^{\alpha+ 1/\theta}_H(X)$ 
the function $(t,x)\mapsto P_tf(x) $ belongs to $C^{0,\alpha + 1/\theta}_H ([0,T]\times X)$. Indeed, by Proposition \ref{representation}(iii), it belongs to $C^{0,n}_H  ([0,T]\times X)$ with $n= [\alpha + 1/\theta]$, while  Lemma \ref{Le:sgrHolderZygmund} yields $ P_tf \in C^{\alpha+ 1/\theta}_H(X)$ for every $t\leq T$, and $\sup_{0\leq t\leq T} \| P_tf \|_{C^{\alpha+ 1/\theta}_H(X)}\leq C \|  f \|_{C^{\alpha+ 1/\theta}_H(X)}$.   

 Therefore it is sufficient to prove that the statements hold in the case $f\equiv 0$, namely when $v=v_0$. 
Taking proposition \ref{Pr:nonoptimal_par} into account, it remains to be checked  that for every $t\in [0,T]$, $v(t, \cdot)\in C^{1/\theta}_H(X)$ in case of statement (i), $v_0(t, \cdot)\in C^{\alpha +1/\theta}_H(X)$ in case of statement (ii), with H\"older norm bounded by a constant independent of $t$. 
The proof is quite similar to the proof of Theorem \ref{Th:Schauder_ell}. 
Let  $n\in \N\cup \{0\}$ be the integral part of $\alpha +1/\theta$, with $\alpha =0$ in the case of  statement (i) and $\alpha \in (0, 1)$ in the case of statement (ii); we treat separately the cases $n>0$ and $n=0$.

Let  $n=0$.  For every fixed $h$,  we split $v = a_h  + b_h $, where for every $t\in [0, T]$, $y\in X$ 
\begin{equation}
\label{a,b_par0}
a_h(t, y) =   \int_0^{\|h\|^{1/\theta}_H \wedge t}   P_{s}g(t-s, y) \,ds,  \quad   b_h(t, y) = \ds \int_{\|h\|^{1/\theta}_H \wedge t}^{t}   P_{s}g(t-s, y) \,ds. \end{equation}
So, for every $x\in X$ and $t\in [0, T]$  we have 
$$\begin{array}{lll}
| a_h(t, x+h) - a_h(t, x) | & \leq & \ds  \int_0^{\|h\|^{1/\theta}_H}    |P_{s} g(t-s, x+h) - P_s g(t-s, x)|\, ds
\\
\\
& \leq & \ds \int_0^{\|h\|^{1/\theta}_H} 2\|g\|_{\infty}  dt = 2\|h\|^{1/\theta}_H \|g\|_{\infty}. 
\end{array}$$
If $\|h\|^{1/\theta}_H\geq t$, we have $b_h(t, x+h) - b_h(t, x) =0$. If $\|h\|^{1/\theta}_H< t$
to estimate $| b_h(t, x+h) - b_h(t, x) | $ we use \eqref{eq:1}, 
which yields 
$$\begin{array}{lll}
 | b_h(t, x+h) - b_h(t, x) | & \leq & \ds  \int_{\|h\|^{1/\theta}_H}^{t}   | P_s g(t-s, x+h) - P_s g(t-s, x)|\,   ds   
 \\
\\
& \leq & \ds
   \int_{\|h\|^{1/\theta}_H}^{\infty}  \frac{K_{1 }}{s^{ \theta}}   ds   \,\|h\|_H
 \|g\|_{\infty}
 \leq    \frac{K_{1 }}{ \theta -1} \|h\|_H^{ 1/ \theta  } \|g\|_{\infty}. 
  \end{array}$$
Summing up,   $v (t, \cdot)\in C^{1/\theta}_H(X)$, and 
$$[v_0(t, \cdot)]_{C^{ 1/\theta}_H(X)} \leq  \bigg(2+  \frac{K_{1 }}{  \theta -1}\bigg)\|g\|_{\infty}, \quad 0\leq t\leq T.  $$
This estimate and \eqref{stimasup_par} give \eqref{maggSchauder_par0} with $C(T) = 2+ K_{1 }/( \theta -1) + T$, in the case that $\theta >1$.

If  $\alpha \in (0,1)$, $\alpha + 1/\theta <1$, and $g\in C^{0,\alpha}_H([0,T]\times X)$, we use    \eqref{k=0}    and we get  
$$\begin{array}{lll}
| a_h(t, x+h) - a_h(t, x) | & \leq & \ds  \int_0^{\|h\|^{1/\theta}_H\wedge t}   | P_s g(t-s, x+h) - P_s g(t-s, x)|\, ds
\\
\\
& \leq & \ds \int_0^{\|h\|^{1/\theta}_H \wedge t} e^{\alpha \omega s}M^{\alpha}\|h\|^{\alpha}_H [g(t-s, \cdot)]_{C^{\alpha}_H(X)} ds
\\
\\
& \leq &  \ds  \max\{e^{\alpha \omega T}, 1\}  M^{\alpha}\|h\|^{\alpha + 1/\theta}_H \sup_{0\leq r\leq T}[g(r, \cdot)]_{C^{\alpha}_H(X)} . 
\end{array}$$
As before, if $\|h\|^{1/\theta}_H\geq t$ we have $b_h(t, x+h) - b_h(t, x)=0$.  If $\|h\|^{1/\theta}_H< t$, 
to estimate $ | b_h(t, x+h) - b_h(t, x) | $  we use  \eqref{eq:2}, that yields   
$$\begin{array}{lll} | b_h(t, x+h) - b_h(t, x) |  & \leq  &\ds  \int_{\|h\|^{1/\theta}_H}^{t}  \frac{K_{1, \alpha}e^{\omega s}}{s^{(1-\alpha)\theta}}  [g(t-s, \cdot)]_{C^{\alpha}_H(X)}ds\,\|h\|_H
\\
\\
&  \leq  & \ds   \max\{e^{\alpha \omega T}, 1\}   \frac{K_{1, \alpha}}{ (1-\alpha)\theta -1} \|h\|_H^{ 1/ \theta   + \alpha  } \sup_{0\leq r\leq T} [g(r, \cdot)]_{C^\alpha_H(X)}. \end{array}$$
Summing up, we obtain   $v(t, \cdot)\in C^{\alpha + 1/\theta}_H(X)$, and 
$$[v_0(t, \cdot)]_{C^{\alpha + 1/\theta}_H(X)} \leq   \max\{e^{\alpha \omega T}, 1\} \bigg(M^{\alpha} +  \frac{K_{1, \alpha}}{ (1-\alpha)\theta -1}\bigg)  \sup_{0\leq r\leq T} [g(r, \cdot)]_{C^\alpha_H(X)}, \quad 0\leq t\leq T. $$
This estimate, together with \eqref{stimasup_par}, yields    \eqref{maggSchauder_ell}, with $C(T)= T +  \max\{e^{\alpha \omega T}, 1\}(M^{\alpha} +  K_{1, \alpha}/((1-\alpha)\theta -1))$, in the case that $\alpha + 1/\theta <1$.

Let us consider now the case $n>0$. 
By Proposition \ref{Pr:nonoptimal_par}  we already know that $v_0\in C^{0,n}_H([0,T]\times X)$. 
It remains to prove  that $D^n_Hv(t, \cdot)$ is $H$-H\"older continuous with values in 
$\mathcal L^n(H)$, with exponent $1/\theta -n$ as far as  statement (i) is concerned, and with exponent $\alpha + 1/\theta -n$ as far as statement (ii) is concerned. Once again, this is done as in Theorem  \ref{Th:Schauder_ell}, 
splitting every partial derivative  $D^n_Hv(t, y)(h_1, \ldots, h_n) = a_h(t, y) + b_h(t, y)$, where   now  we set   
\begin{equation}
\label{a_par}
a_h(t, y) : =   \int_0^{\|h\|^{1/\theta}_H\wedge t}   D^n_HP_sg(t-s, \cdot)(y)(h_1, \ldots, h_n) \,ds, \quad t\in [0,T], \;y\in X, 
\end{equation}
\begin{equation}
\label{b_par}
b_h(t, y) =  \int_{\|h\|^{1/\theta}_H\wedge t }^{t}  D^n_HP_sg(t-s, \cdot)(y)(h_1, \ldots, h_n) \,ds, \quad t\in [0,T], \;y\in X. 
\end{equation}

Let  us consider statement (i). We recall that in this case we have  $g\in C_b([0,T]\times X)$, $n\theta \in (0, 1)$, $(n+1)\theta >1$. 
Estimate \eqref{nth-derivatives} yields  
$$\begin{array}{lll}
| a_h(t, x+h) - a_h(t, x) | & \leq & | a_h(t, x+h)| + |a_h(t, x) | \leq  \ds 2 
K_{n } \int_0^{\|h\|^{1/\theta}_H\wedge t }\frac{e^{  \omega  s}}{s^{n\theta}} ds \prod_{j=1}^n\|h_j\|_H \|g\|_{\infty}
\\
\\
& \leq  & \ds \max\{    e^{\omega T},   1\} 
\frac{2K_{n } }{ 1-n\theta} \|h\|_H^{(1-n\theta)/\theta} \prod_{j=1}^n\|h_j\|_H  \|g\|_{\infty}. 
\end{array}$$
To estimate $| b_h(t, x+h) - b_h(t, x) | $ when $\|h\|^{1/\theta}_H<t$ we use \eqref{eq:3},  
which yields 
$$\begin{array}{lll}
| b_h(t, x+h) - b_h(t, x) | & \leq &    \ds   \int_{\|h\|^{1/\theta}_H}^{t}  \frac{K_{n+1 }e^{  \omega  s}}{s^{(n+1 )\theta}} ds\,\|h\|_H
 \prod_{j=1}^n\|h_j\|_H \|g\|_{\infty}
\\
\\
& \leq  &\ds  \max\{   e^{\omega T},    1\} \frac{K_{n+1 }}{ (n+1 )\theta -1} \|h\|_H^{ 1  /    \theta - n  } \prod_{j=1}^n\|h_j\|_H \| g \|_{\infty}. 
\end{array}$$
Summing up we get
$$| (D^n_Hv_0(t, x +h) -  D^n_Hv_0(t, x))(h_1, \ldots, h_n)| \leq C_3 \|h\|_H^{1/\theta -n   } \prod_{j=1}^n\|h_j\|_H \|f\|_{\infty}, \quad 0\leq t\leq T,  $$
with 
$$C_3 =   \max\{    e^{\omega T},   1\}   \left(     \frac{2K_{n } }{ 1-n\theta} +  \frac{K_{n+1 }}{ (n+1 )\theta -1}   \right)   . $$
Therefore,  
$[D^n_Hv_0(t, \cdot)]_{C^{1/\theta -n  }_H(X; \mathcal L^n(H))} \leq C_3 \|f\|_{\infty}$ for every $t\in [0, T]$. This estimate and \eqref{stimasup_par}   give \eqref{maggSchauder_par0} for $n\geq 1$. 

Let us consider  statement (ii). Now we have  $g\in C^{0, \alpha}_H([0,T]\times X)$ with  $\alpha \in (0, 1)$, $(n-\alpha)\theta \in (0, 1)$, $(n+1-\alpha)\theta >1$.  
Estimate \eqref{nth-derivativesHolder} yields
$$\begin{array}{l}
| a_h(t, x+h) - a_h(t, x) |   \leq    \ds 2 
K_{n, \alpha} \int_0^{\|h\|^{1/\theta}_H\wedge t }\frac{e^{  \omega s}}{s^{(n-\alpha)\theta}} [g(t-s, \cdot)]_{C^\alpha_H(X)}ds \prod_{j=1}^n\|h_j\|_H 
\\
\\
  =   \ds  \max\{    e^{\omega T},    1\} \frac{2K_{n, \alpha} }{ 1-(n-\alpha)\theta} \|h\|_H^{(1-(n-\alpha)\theta)/\theta} \prod_{j=1}^n\|h_j\|_H  \sup_{0\leq r\leq T}[g(r, \cdot)]_{C^\alpha_H(X)}. 
\end{array}$$
To estimate $| b_h(t, x+h) - b_h(t, x) | $ for $\|h\|^{1/\theta}_H<t$ we use \eqref{eq:4}, which yields 
$$\begin{array}{l}
| b_h(t, x+h) - b_h(t, x) |   \leq     \ds   \int_{\|h\|^{1/\theta}_H}^{t}  \frac{K_{n+1, \alpha } e^{  \omega s} }{s^{(n+1-\alpha)\theta}}  [g(t-s, \cdot)]_{C^\alpha_H(X)}ds\,\|h\|_H
 \prod_{j=1}^n\|h_j\|_H 
\\
\\
  \leq    \ds  \max\{    e^{\omega T},   1\} \frac{K_{n+1 }}{ (n+1 -\alpha)\theta -1} \|h\|_H^{ 1/ \theta - n  +\alpha } \prod_{j=1}^n\|h_j\|_H \sup_{0\leq r\leq T}[g(r, \cdot)]_{C^\alpha_H(X)}. 
\end{array}$$
Summing up we get
$$| (D^n_Hv_0(t, x +h) -  D^n_Hv_0(t, x))(h_1, \ldots, h_n)| \leq C_4 \|h\|_H^{1/\theta -n +\alpha } \prod_{j=1}^n\|h_j\|_H \sup_{0\leq r\leq T}[g(r, \cdot)]_{C^\alpha_H(X)}$$
with 
$$C_4 =  \max\{    e^{\omega T},   1\}   \left(     \frac{2K_{n, \alpha} }{ 1-(n-\alpha)\theta} +  \frac{K_{n+1, \alpha}}{ (n+1-\alpha)\theta -1}   \right)   . $$
Therefore,  $[D^n_Hv_0(t, \cdot)]_{C^{1/\theta -n  +\alpha}_H(X; \mathcal L^n(H))} \leq C_4  [f]_{C^\alpha_H(X)}$  for every $t\in [0, T]$. This estimate and \eqref{stimasup_par}   give \eqref{maggSchauder_par} in the case $n\geq 1$. 
\end{proof}

\begin{Theorem}
\label{Zygmund_par}
 Let  $f\in C_b(X)$, $g\in C_b([0,T]\times X)$ and let $v$ be defined by \eqref{v}. The following statements hold. 
  \begin{itemize}
 \item[(i)] If $1/\theta = k\in \N$ and $f\in Z^{k}_H(X)$, then $v \in Z^{0,k}_H([0,T]\times X)$ and there exists $C= C(T)>0$, independent of $f$ and $g$, such that 
 \begin{equation}
 \label{maggZygmund_par0}
 \|v\|_{ Z^{0,k}_H([0,T]\times X)} \leq C ( \|f\|_{Z^{k}_H(X)} + \|g\|_{\infty}). 
 \end{equation}
  \item[(ii)] 
If  $\alpha\in (0,1)$, $\alpha + 1/\theta = k \in \N$,  $f\in Z^{k}_H(X)$ and $g\in  C^{0,\alpha}_H([0,T]\times X)$, then $v \in Z^{0,k}_H([0,T]\times X)$, and there exists $C= C(T, \alpha)>0$, independent of $f$ and $g$, such that 
 \begin{equation}
 \label{maggZygmund_par}
 \|v\|_{ Z^{0,k}_H([0,T]\times X)}   \leq C  ( \|f\|_{Z^{k}_H(X)} + \|g\|_{C^{0,\alpha}_H([0,T]\times X)} . 
 \end{equation}
\end{itemize}
 \end{Theorem}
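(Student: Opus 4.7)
The plan is to adapt the Zygmund argument of Theorem \ref{Zygmund_ell} to the evolution setting, following the same splitting scheme used in Theorem \ref{Th:Schauder_par}. First, I would split $v = P_tf + v_0$. For the semigroup part, since $f\in Z^k_H(X)$, Lemma \ref{Le:sgrHolderZygmund} gives $P_t f \in Z^k_H(X)$ with $\|P_tf\|_{Z^k_H(X)} \leq c \|f\|_{Z^k_H(X)}$ uniformly in $t\in[0,T]$, and Proposition \ref{representation}(iii) applied to $f \in C^{k-1}_H(X)$ yields that all derivatives up to order $k-1$ are jointly continuous in $(t,x)$. Hence $P_\cdot f \in Z^{0,k}_H([0,T]\times X)$ with the desired bound, and the problem reduces to showing $v_0 \in Z^{0,k}_H([0,T]\times X)$.

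Next, from Proposition \ref{Pr:nonoptimal_par} applied with $n = k-1$ (noting that $(k-1)\theta < 1$ when $\alpha=0$, and $(k-1-\alpha)\theta < 1$ when $\alpha\in(0,1)$), we already have $v_0 \in C^{0,k-1}_H([0,T]\times X)$, so the space-time continuity requirement in the definition of $Z^{0,k}_H$ is satisfied. It remains to prove that $D^{k-1}_H v_0(t,\cdot)$ is Zygmund on $X$ uniformly in $t\in[0,T]$, with appropriate norm bound. Mirroring the splitting in \eqref{a,bZygmund} but with cutoff adapted to the evolution case as in \eqref{a_par}--\eqref{b_par}, I would fix $h, h_1,\ldots,h_{k-1}\in H$ and write
$$D^{k-1}_H v_0(t,y)(h_1,\ldots,h_{k-1}) = a_h(t,y) + b_h(t,y),$$
where $a_h$ integrates from $0$ to $\|h\|_H^{1/\theta}\wedge t$ and $b_h$ from $\|h\|_H^{1/\theta}\wedge t$ to $t$.

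For $a_h(t,x+2h) - 2a_h(t,x+h) + a_h(t,x)$, I would use the crude triangle inequality bounding the integrand by $4\sup_y \|D^{k-1}_H P_s g(t-s,\cdot)(y)\|_{\mathcal L^{k-1}(H)}\prod_j\|h_j\|_H$, and then control it either by \eqref{nth-derivatives} (when $g\in C_b$, $k=1/\theta$, using $s^{-(k-1)\theta}$ which integrates to $\|h\|_H$) or by \eqref{nth-derivativesHolder} (when $g\in C^{0,\alpha}_H$, $k=\alpha+1/\theta$, using $s^{-(k-1-\alpha)\theta}$). For $b_h(t,x+2h)-2b_h(t,x+h)+b_h(t,x)$ with $\|h\|_H^{1/\theta}<t$, I would apply the Zygmund inequality \eqref{Zygmund2} to $D^{k-1}_H P_s g(t-s,\cdot)$, obtaining a factor $\sup_y\|D^{k+1}_H P_s g(t-s,\cdot)(y)\|_{\mathcal L^2(H)}\|h\|_H^2$, then invoke \eqref{nth-derivatives} or \eqref{nth-derivativesHolder} with $n=k+1$ to get integrands $s^{-(k+1)\theta}\|h\|_H^2$ or $s^{-(k+1-\alpha)\theta}\|h\|_H^2$; the integral from $\|h\|_H^{1/\theta}$ to $t$ produces exactly $\|h\|_H$ times an absorbable constant (bounded via $\max\{e^{\omega T},1\}$).

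The only subtle point I anticipate is bookkeeping: one must verify that the exponents $k\theta$ (resp.\ $(k+1-\alpha)\theta$) in $b_h$ are strictly greater than $1$ while the exponents $(k-1)\theta$ (resp.\ $(k-1-\alpha)\theta$) in $a_h$ are strictly less than $1$, which is exactly the condition $\alpha + 1/\theta = k$. Once the two pieces are added and combined with the case $\|h\|_H^{1/\theta}\geq t$ (where $b_h\equiv 0$ and $a_h$ is handled directly by the sup bound over an interval of length $t\leq \|h\|_H^{1/\theta}$), we obtain $[D^{k-1}_Hv_0(t,\cdot)]_{Z^1_H(X,\mathcal L^{k-1}(H))} \leq C(\|g\|_\infty$ or $\|g\|_{C^{0,\alpha}_H})$ uniformly in $t$. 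Combined with Proposition \ref{Pr:nonoptimal_par} this gives \eqref{maggZygmund_par0} and \eqref{maggZygmund_par}. I expect no genuine obstacle beyond this exponent bookkeeping; all estimates are direct transcriptions of the stationary Zygmund proof with $e^{-\lambda t}\,dt$ replaced by $ds$ on a finite interval, and absorbed $\omega$-factors are controlled by $\max\{e^{\omega T},1\}$.
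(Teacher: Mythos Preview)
Your proposal is correct and follows essentially the same approach as the paper: reduce to $v_0$ via Lemma \ref{Le:sgrHolderZygmund} and Proposition \ref{representation}(iii), invoke Proposition \ref{Pr:nonoptimal_par} with $n=k-1$ for the lower-order regularity, then split $D^{k-1}_H v_0$ at $\|h\|_H^{1/\theta}\wedge t$ and estimate the second differences of $a_h$ and $b_h$ exactly as you describe. The paper treats the case $k=1$ separately before handling $k>1$, whereas your unified formulation with $D^{k-1}_H$ (read as the identity when $k=1$) covers both at once; otherwise the arguments coincide line by line, including the exponent bookkeeping and the $\max\{e^{\omega T},1\}$ absorption.
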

 \begin{proof}
We know by Lemma \ref{Le:sgrHolderZygmund} that for every $f\in Z^{k}_H(X)$ the function $(t,x) \mapsto P_tf(x)$ belongs to $Z^{0,k}_H([0,T]\times X)$, and estimate \eqref{Zygmund-Zygmund} holds. So it is enough to prove that the statements hold for $f\equiv 0$, in which case $v=v_0$ defined by \eqref{v0}. 

First we prove statements (i) and (ii) in the case $k=1$. 
 
By Proposition \ref{Pr:nonoptimal_par} we already know that  $v_0\in C_b([0,T]\times X)$, with $\|v_0\|_{\infty} \leq T\|g\|_{\infty} $. To show that 
 $v_0(t, \cdot)\in Z^1_H(X)$ for every $t\in [0, T]$, for every fixed $h\in H$  we consider again  the functions $a_h$ and $b_h$ defined in \eqref{a,b_par0}, such that $v_0= a_h+b_h$. 

Let us prove statement (i), in the case $\theta =k=1$. For every $x\in X$ we have
$$\begin{array}{l}
| a_h(t, x+2h) - 2 a_h(t, x+h) + a_h(t, x)|
\\
\\
\ds \leq  \int_0^{\|h\|_H\wedge t}  |P_sg(t-s, \cdot)(x+2h) - 2 P_sg(t-s, \cdot)(x+h) +  P_sg(t-s, \cdot)(x )|\,dt
\\
\\
\ds \leq 4 \int_0^{\|h\|_H }    \|g\|_{\infty} dt = 4 \|h\|_H   \|g\|_{\infty}. 
\end{array}$$
We recall that $b_h(t, x+2h) - 2 b_h(t, x+h) + b_h(t, x)=0$ if $\|h\|_H\geq t$. 
To estimate  $b_h(t, x+2h) - 2 b_h(t, x+h) + b_h(t, x)$ if $\|h\|_H < t$
  we use \eqref{perZ0}, that yields 
$$\begin{array}{l}
|b_h(t, x+2h) - 2 b_h(t, x+h) + b_h(t, x)|
\\
\\
\ds \leq  \int_{\|h\|_H}^{t}  |P_sg(t-s, \cdot)(x+2h) - 2 P_sg(t-s, \cdot)(x+h) +  P_sg(t-s, \cdot)(x )|\,ds
\\
\\
\ds \leq  \int_{\|h\|_H}^{t}   K_2\frac{e^{ \omega s}}{s^{2 }} \|g\|_{\infty}   |h\|_H^2 ds   
\leq \max\{ e^{2\omega T}, 1\} K_2\|h\|_H   \|g\|_{\infty} . 
\end{array}$$
Summing up, 
$$|v_0(t, x+2h) - 2 v_0(t, x+h) +  v_0(t, x)| \leq (4+\max\{   e^{\omega T},    1\} K_2)\|h\|_H   \|g\|_{\infty}, $$
so that $v_0\in Z^1_H(X)$ and \eqref{maggZygmund_par0} holds with $C = T + 4+\max\{   e^{\omega T},    1\}K_2$. So, statement (i) is proved for $\theta =1$. 
Concerning statement (ii),  when $\alpha + 1/\theta =1$ and  $g\in  C^{0, \alpha}_H([0,T]\times X)$   we have by    \eqref{k=0}   
$$\begin{array}{l}
| a_h(t, x+2h) - 2 a_h(t, x+h) + a_h(t, x)|
\\
\\
\ds \leq  \int_0^{\|h\|_H^{1/\theta} \wedge t}  |P_sg(t-s, \cdot)(x+2h) - 2 P_sg(t-s, \cdot)(x+h) +  P_sg(t-s, \cdot)(x )|\,ds
\\
\\
\ds \leq 2 \int_0^{\|h\|_H^{1/\theta} \wedge t}   M^{\alpha} e^{\alpha \omega s} [g(t-s, \cdot)]_{C^{\alpha}_H(X)}\|h\|_H^{\alpha}dt \leq  2  \max\{ e^{\alpha \omega T}, 1\} M^{\alpha}  \|h\|_H \sup_{0\leq r\leq T}[g(r, \cdot)]_{C^{\alpha}_H(X)}
\end{array}$$
while to estimate $|b_h(t, x+2h) - 2 b_h(t, x+h) + b_h(t, x)|$ for $\|h\|_H^{1/\theta} <t$ we use \eqref{eq:5}, that gives 
(recalling that $(2-\alpha)\theta = 1+\theta$), 
$$\begin{array}{l}
|b_h(t, x+2h) - 2 b_h(t, x+h) + b_h(t, x)|
\\
\\
\ds \leq  \int_{\|h\|_H^{1/\theta}}^{t}  |P_sg(t-s, \cdot)(x+2h) - 2 P_sg(t-s, \cdot)(x+h) +  P_sg(t-s, \cdot)(x )|\,ds
\\
\\
\ds \leq  \int_{\|h\|_H^{1/\theta}}^{t}  K_{2, \alpha}\frac{e^{ \omega s}}{s^{(2-\alpha)\theta }}  [g(t-s, \cdot)]_{C^{\alpha}_H(X)} \|h\|^2_Hds
 \leq  \max\{   e^{\omega T},    1\} \frac{ K_{2, \alpha}}{\theta} \|h\|_H \sup_{0\leq r\leq T}[g(r, \cdot)]_{C^{\alpha}_H(X)}.  
\end{array}$$
Summing up, 
$$\begin{array}{l}
|v_0(t, x+2h) - 2 v_0(t, x+h) +  v_0(t, x)| \leq  
\\
\\
\leq \ds \left( 2  \max\{    e^{\alpha \omega T},   1\} M^{\alpha} +\max\{    e^{\omega T},    1\}  \frac{ K_{2, \alpha}}{\theta}\right)\|h\|_H  \sup_{0\leq r\leq T}[g(r, \cdot)]_{C^{\alpha}_H(X)}, \end{array}$$
so that $u\in Z^1_H(X)$ and \eqref{maggZygmund_ell}  follows. So, statement (ii) is proved for $\alpha +1/\theta =1$. 

In the case that $k>1$ (we recall that $k= 1/\theta$ in statement (i), $k= \alpha + 1/\theta$ in statement (ii)),   Proposition \ref{Pr:nonoptimal_par} yields  $v_0\in C^{0, k-1}_H([0,T]\times X)$. 
We have to prove   that $[D^{k-1}_Hv_0(t, \cdot)]_{Z^1(X, \mathcal L^{k-1}(H))}$ is bounded by a constant independent of $t$. To this aim, fixed any $h, h_1, \ldots, h_{k-1}\in H$, for every $t\in [0,T]$ and $y\in X$  we split 
 $D^{k -1}_Hv_0(t, y)(h_1, \ldots, h_{k-1})$ as $a_h(t, y) + b_h(t, y)$, where now  
 $$a_h(t, y) =  \int_0^{\|h\|^{1/\theta}_H\wedge t}  D^{k-1}_HP_sg(t-s, \cdot)(y)(h_1, \ldots,   h_{k-1}   ) \,ds$$
$$b_h(t, y) = \int_{\|h\|^{1/\theta}_H\wedge t }^{t}  D^{k-1}_HP_sg(t-s, \cdot)(y)(h_1, \ldots,   h_{k-1}   ) \,ds. $$
 We have
 $$\begin{array}{l}
| a_h(t, x+2h) - 2 a_h(t, x+h) + a_h(t, x)|
\\
\\
\ds \leq  \int_0^{\|h\|_H^{1/\theta}\wedge t}  |(D^{k-1}_H P_sg(t-s, \cdot)(x+2h) - 2D^{k-1}_HP_sg(t-s, \cdot)(x+h) 
\\
\\
\hspace{22mm} + D^{k-1}_HP_sg(t-s, \cdot)(x ))(h_1, \ldots, h_{k-1})|\,dt , \end{array}$$
and arguing as in the proof of Theorem \ref{Zygmund_ell}, we see that the right-hand side is bounded by  
$$ \int_0^{\|h\|_H^{1/\theta}}  e^{ \omega s}\frac{ 4 K_{k-1}}{s^{(k-1)\theta}}ds\prod_{j=1}^{k-1}\|h_j\|_H \|g\|_{\infty}  \leq 
\max\{   e^{\omega T},    1\}   4 kK_{k-1}   \|h\|\prod_{j=1}^{k-1}\|h_j\|_H \|g\|_{\infty}$$ 
if $k = 1/\theta$, and by 
$$\begin{array}{l}
\ds \int_0^{\|h\|_H^{1/\theta}}    e^{ \omega s}\frac{4K_{k-1, \alpha} }{s^{(k-1 -\alpha)\theta}}[g(t-s, \cdot]_{C^{\alpha}_H(X)}ds \prod_{j=1}^{k-1}\|h_j\|_H 
\\
\\
\ds \leq  \max\{   e^{\omega T},     1\} 4(k-\alpha) K_{k-1, \alpha}
 \|h\|_H \prod_{j=1}^{k-1}\|h_j\|_H \sup_{0\leq r\leq 1}[g(r, \cdot)]_{C^{\alpha}_H(X)}, \end{array}$$
if  $g\in C^{0, \alpha}_H([0,T]\times X)$ with $\alpha \in (0, 1)$ and 
$k= \alpha + 1/\theta$. 
If $\|h\|_H^{1/\theta}<t$, we estimate 
$$\begin{array}{l}
| b_h(t, x+2h) - 2 b_h(t, x+h) + b_h(t, x)|  \leq
\\
\\
\ds  \int_ {\|h\|_H^{1/\theta}}^{t}  e^{-\lambda t} |(D^{k-1}_H  P_sg(t-s, x+2h) - 2D^{k-1}_HP_sg(t-s, x+h) + D^{k-1}_HP_sg(t-s, x ))(h_1, \ldots, h_{k-1})|    \,dt
\end{array}$$
and  arguing  again as in the proof of Theorem  \ref{Zygmund_ell} we see that the right-hand side is bounded by 
$$ \int_ {\|h\|_H^{1/\theta}}^{t}   e^{ \omega s}\frac{  K_{k+1}}{s^{(k+1)\theta}} ds \|h\|^2_H \prod_{j=1}^{k-1}\|h_j\|_H \|g\|_{\infty}  \leq   \max\{    e^{\omega T},     1\} k K_{k+1}  \|h\|\prod_{j=1}^{k-1}\|h_j\|_H \|g\|_{\infty}, $$
if $k = 1/\theta$, and by 
$$\begin{array}{l} \ds  \int_ {\|h\|_H^{1/\theta}}^{t}    e^{ \omega s}\frac{K_{k+1, \alpha}}{s^{(k+1-\alpha)\theta}}[g(t-s, \cdot)]_{C^{\alpha}_H(X)}ds \|h\|^2_H \prod_{j=1}^{k-1}\|h_j\|_H 
\\
\\
\ds  \leq  \max\{    e^{\omega T},     1\} (k-\alpha) K_{k+1, \alpha}
 \|h\|_H \prod_{j=1}^{k-1}\|h_j\|_H  \sup_{0\leq r\leq 1}[g(r, \cdot)]_{C^{\alpha}_H(X)}, \end{array}$$
if  $g\in C^{0,\alpha}_H([0,T]\times X)$ with $\alpha \in (0, 1)$ and 
$k= \alpha + 1/\theta$. 
Summing up, we estimate $[D^{k-1}_Hv_0(t, \cdot)(x+2h) -2D^{k-1}_Hv_0(t, \cdot)(x+h) + D^{k-1}_Hv_0(t, \cdot)(x)] (h_1, \ldots h_{k-1})$ 
 by 
 $$ \max\{   e^{\omega T},    1\}k(4  K_{k-1} + K_{k+1})\prod_{j=1}^{k-1}\|h_j\|_H \|g\|_{\infty} \|h\|, $$
 if $1/\theta = k$, and by  
 $$ \max\{   e^{\omega T},    1\}(k-\alpha)(4  K_{k-1, \alpha} + K_{k+1, \alpha})\prod_{j=1}^{k-1}\|h_j\|_H \sup_{0\leq r\leq 1}[g(r, \cdot)]_{C^{\alpha}_H(X)} \|h\|, $$
 if  $g\in C^{0,\alpha}_H([0,T]\times X)$  with $\alpha \in (0, 1)$ and $ \alpha + 1/\theta = k$. This implies that 
 $v_0(t, \cdot) \in Z^{k}_{  H   }(X)$ with Zygmund seminorm bounded by $ \max\{   e^{\omega T},    1\}k$ $(4  K_{k-1} + K_{k+1})  \|f\|_{\infty}  $ in the first case,  and by  $ \max\{   e^{\omega T},    1\}(k-\alpha)(4  K_{k-1, \alpha} + K_{k+1, \alpha})  [f]_{C^{\alpha}_H(X)} \|h\|$, in the second case. Such estimates and \eqref{Zygmund-Zygmund}   yield \eqref{maggZygmund_par0} and \eqref{maggZygmund_par}, respectively. 
 \end{proof}

  
 \section{Examples in finite dimension}
 \label{sect:finite}

In this section  $X=  \R^N$ and $T_t = e^{tB}$ for every $t$, where $B$ is any $N\times N$ matrix, so that 
\begin{equation}
\label{PtRN}P_tf(x) = \int_{\R^N} f(e^{tB}x + y) \mu_t(dy), \quad t>0, \;f\in C_b(\R^N), \; x\in \R^N. 
\end{equation}
The measures $\mu_t$ are given by
\begin{equation}
\label{muRN}
\mu_t(dy) = g_t(y)dy, \quad t>0, 
\end{equation}
where the nonnegative functions $g_t\in L^1(\R^N)$ satisfy $g_{t+s}(x) = \int_{\R^N} g_s(x-e^{sB}y)g_t(y)dy$ for $t$, $s>0$, a.e. $x\in \R^N$, and $\|g_t\|_{L^1(\R^N)} =1$, for every $t>0$. If $B=0$ this condition is simply $g_{t+s} = g_t\star g_s$ for $s$, $t>0$. 
 
Hypotheses  \ref{Hyp1}  and  \ref{Hyp2}  are   satisfied with $H=H_t= \R^N$ provided  $g_t$ is weakly differentiable in all directions and
\begin{equation}
\label{FominRN}
\sup_{t>0} \;  t^{\theta} \bigg\| \frac{\partial g_t}{\partial x_k}\bigg\|_{L^1(\R^N)} <+\infty, \quad k=1, \ldots, n. 
\end{equation}
%

 \subsection{The Laplacian and the fractional Laplacian}
 \label{subs:Laplacian}

Strictly speaking, the results of this section are contained in the ones of both sections  \ref{sect:OU} and \ref{sect:Gross}, but we prefer to isolate them because checking our assumptions is particularly simple in this case and does not involve the technicalities needed in the more complicated situations of the next sections. 

 We recall that the heat semigroup is given by \eqref{P_t}, with $T_t=I$ for every $t$ (namely, $B=0$) and $\mu_t(dx) = g_t(x) dx$, where $g_t$ is the Gaussian kernel
$$g_t(x) =  \frac{1}{(4\pi t)^{N/2}} e^{-\frac{|x|^{2}}{4t}}, \quad x\in \R^N, \; t>0, $$ 
that satisfies \eqref{FominRN} with $\theta = 1/2$. The operator $L$ is the realization of the Laplacian in $C_b(\R^N)$, whose domain is $\{ f\in C_b(\R^N)\cap_{p>1}W^{2,p}_{loc}(\R^N): \; \Delta f\in C_b(\R^N)\}$. Schauder and Zygmund regularity results have several independent proofs by now, the present approach was outlined in \cite{L1}.  
Concerning the fractional Laplacian $-(-\Delta)^{s}$, $s\in (0,1)$, Schauder and Zygmund regularity results for stationary equations are already available. The first proof of the Schauder estimates seems to be in \cite[Cor. 2.9]{Si}.   Up-to-date references may be found  in the survey paper \cite{Stinga};     for more general classes of pseudodifferential operators including the fractional Laplacian see   \cite{DK,K} and the references therein.  However, a proof through our approach is very simple. Indeed, the associated semigroup is given by the classical subordination formula,
$$e^{-t(-\Delta)^{s}}f(x) = \int_0^{\infty} T_\sigma f(x) \eta^{(s)}_t(\sigma) d\sigma, \quad t>0, \;x\in \R^N, $$
where   $T_\sigma$ is now the heat semigroup, and    $ \eta^{(s)}_t$ is the inverse Laplace transform of $\lambda \mapsto e^{-t\lambda^{s}}$. Setting  $ \eta^{(s)}:= \eta^{(s)}_1$, we get  
$$\eta^{(s)}_t(\sigma ) = t^{-1/s}\eta^{(s)}(t^{-1/s}\sigma), \quad t, \; \sigma >0. $$
Moreover, $\eta^{(s)}$ is smooth in $(0, +\infty)$, it has positive values and it belongs to $L^{\infty}(0, +\infty) \cap W^{1,1}(0, +\infty)$. This is easily seen modifying the integral that defines $\eta^{(s)}$, to get (see e.g. \cite{Yosida})
\begin{equation}
\label{Y}
\eta^{(s)}(\sigma) = \frac{1}{\pi} \int_0^{\infty} e^{-\sigma r - r^s\cos(s\pi )}\sin(r^s\sin(s\pi))dr, \quad \sigma >0. 
\end{equation}
Therefore, $e^{-t(-\Delta)^{s}}$ takes the form  \eqref{PtRN}, with $B=0$ and 
$$ \mu_t(dy) = p_{s, t}(y) dy, $$
where  
\begin{equation}
\label{eq:Laplace}
p_{s, t}(y) = \frac{1}{t^{1/s}}\int_0^{\infty} g_\xi(y)\eta^{(s)} (t^{-1/s}\xi)d\xi, \quad y\in \R^N, \; t>0, 
\end{equation}
By homogeneity, we get
$$ p_{s, t}(y) = t^{-N/  (2s)    } p_{s, 1}(t^{-1/(2s)}y) , \quad t>0, \; y\in \R^N, $$
and such equality easily yields that $t\mapsto \mu_t$ is weakly continuous in $[0, +\infty)$. Moreover, 
$$\frac{\partial}{\partial y_k}p_{s, t}(y) =  t^{  -(N+1)/(2s)    } \frac{\partial}{\partial y_k}p_{s, 1}(t^{  -1/(2s)    }y) , \quad t>0, \; y\in \R^N, $$
which implies  
$$\int_{\R^N} \bigg| \frac{\partial}{\partial y_k}p_{s, t}(y) \bigg|\,dy =  t^{-(N+1)/(2s)} \int_{\R^N} \bigg| \frac{\partial}{\partial y_k}p_{s, 1}(t^{-1/(2s)}y) \bigg|\,dy = t^{-1/(2s)}\int_{\R^N}\bigg| \frac{\partial}{\partial z_k}p_{s, 1}(z) \bigg|\,dz. $$
From the   representation formula \eqref{eq:Laplace} we get
$$\begin{array}{l}
\ds \int_{\R^N}\bigg| \frac{\partial}{\partial z_k}p_{s, 1}(z) \bigg|\,dz = \int_{\R^N} \bigg| \int_0^{\infty} \frac{z_k e^{-|z|^2/4\xi}}{2\xi (4\pi \xi)^{N/2}} \eta^{(s)}(\xi)\,d\xi\bigg|\,dz
\\
\\
\ds = \int_0^{\infty}  \frac{ \eta^{(s)}(\xi)}{2\xi(4\pi \xi)^{N/2}} \int_{\R^N} |z_k| e^{-|z|^2/4\xi}dz \,d\xi =  \frac{ 1}{\sqrt{\pi}}  \int_0^{\infty} \frac{\eta^{(s)}(\xi)}{\sqrt{\xi}}\,d\xi. 
\end{array}$$
The last integral is finite, since $\eta ^{(s)}$ is bounded and it belongs to  $L^1(0, +\infty)$. Therefore, there is $C>0$ such that 
$$\bigg\| \frac{\partial}{\partial y_k}p_{s, t}\bigg\|_{L^1(\R^N)} \leq \frac{C}{t^{ 1/(2s)}}, \quad t>0, \; k=1, \ldots, N, $$
so that Hypotheses  \ref{Hyp1}  and  \ref{Hyp2}  are   satisfied with $X=H=\R^N$ and $\omega =0$, $\theta = 1/(2s)$. Theorems \ref{Th:Schauder_ell} and \ref{Zygmund_ell} yield

\begin{Theorem}
\label{Th:SchZygfraz}
Let $f\in C_b(\R^N)$ and $\lambda >0$, $s\in (0,1 )\setminus \{1/2 \}$. Then the equation
\begin{equation}
\label{eq:fraz}
\lambda u + (-\Delta)^{s} u = f
\end{equation}
has a unique solution $u\in C^{2s}_b(\R^N)$, and there is $C>0$, independent of $f$, such that 
$$\|u\|_{C^{2s}_b(\R^N)} \leq C\|f\|_{\infty}. $$
If $s=1/2$, equation \eqref{eq:fraz} has a unique solution in $Z^1(\R^N)$,  and there is $C>0$, independent of $f$, such that 
$$\|u\|_{Z^1(\R^N)} \leq C\|f\|_{\infty}. $$
If in addition $f\in C^{\alpha}_b(\R^N)$ with $\alpha \in (0, 1)$ and $\alpha + 2s \notin \{1, 2\}$, then $u\in C^{\alpha + 2s}_b(\R^N)$  and there is $C>0$, independent of $f$, such that 
$$\|u\|_{C^{\alpha + 2s}_b(\R^N)} \leq C\|f\|_{ C^{\alpha}_b(\R^N)}. $$
If $\alpha + 2s = k\in \{1, 2\}$, then $u\in Z^{k}_b(\R^N)$  and there is $C>0$, independent of $f$, such that 
$$\|u\|_{Z^{k}_b(\R^N)} \leq C\|f\|_{ C^{\alpha}_b(\R^N)}. $$
\end{Theorem}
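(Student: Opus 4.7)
The plan is to reduce the statement to a direct application of Theorems \ref{Th:Schauder_ell} and \ref{Zygmund_ell}, once the abstract framework is identified with the fractional Laplacian setting. The groundwork has essentially been laid in the discussion preceding the statement: the semigroup $e^{-t(-\Delta)^{s}}$ fits the generalized Mehler form \eqref{P_t} with $T_{t}=I$ (i.e.\ $B=0$), with $\mu_{t}(dy)=p_{s,t}(y)\,dy$ as in \eqref{eq:Laplace}, and with $X=H=\R^{N}$. The estimates on $\|\partial_{y_{k}}p_{s,t}\|_{L^{1}(\R^{N})}$ derived via the scaling $p_{s,t}(y)=t^{-N/(2s)}p_{s,1}(t^{-1/(2s)}y)$ and the subordination identity yield that Hypotheses \ref{Hyp1}, \ref{Hyp2} are satisfied with $\omega=0$ and $\theta=1/(2s)$, so that $1/\theta=2s$.

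With this identification, the generator produced by \eqref{L} is, by construction, an extension of $-(-\Delta)^{s}$ acting on $C_{b}(\R^{N})$; in particular for any $\lambda>0$ and $f\in C_{b}(\R^{N})$ the function $u:=R(\lambda,L)f$ is the unique solution of \eqref{eq:fraz} belonging to $D(L)$. Uniqueness in the stated regularity classes follows because any solution with $u,(-\Delta)^{s}u\in C_{b}(\R^{N})$ satisfies $u=R(\lambda,L)f$ by the standard integration-by-parts argument $u(x)=\int_{0}^{\infty}e^{-\lambda t}P_{t}f(x)\,dt$, which is justified by $\lambda>0$ and the contractivity \eqref{contrazione}.

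It then suffices to dispatch the four cases by plugging $\theta=1/(2s)$ into the abstract theorems. For case (1), $s\in(0,1)\setminus\{1/2\}$ and $f\in C_{b}(\R^{N})$: then $1/\theta=2s\notin\N$, so Theorem \ref{Th:Schauder_ell}(i) gives $u\in C^{2s}_{H}(\R^{N})=C^{2s}_{b}(\R^{N})$ (since $H=\R^{N}$) with the stated bound. For case (2), $s=1/2$: then $1/\theta=1\in\N$, so Theorem \ref{Zygmund_ell}(i) with $k=1$ delivers $u\in Z^{1}_{b}(\R^{N})$. For case (3), $f\in C^{\alpha}_{b}(\R^{N})$ with $\alpha+2s\notin\{1,2\}$: then $\alpha+1/\theta=\alpha+2s\notin\N$, and Theorem \ref{Th:Schauder_ell}(ii) yields $u\in C^{\alpha+2s}_{b}(\R^{N})$. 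For case (4), $\alpha+2s=k\in\{1,2\}$: Theorem \ref{Zygmund_ell}(ii) provides $u\in Z^{k}_{b}(\R^{N})$.

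There is no substantive obstacle in this proof: all the analytic work has been absorbed into verifying \eqref{fundamental} for $(-\Delta)^{s}$, which is completed above via subordination and the explicit scaling of $p_{s,t}$. The only minor point to double-check is that the closed operator $L$ produced abstractly by \eqref{L} indeed coincides (on smooth enough $u$) with $-(-\Delta)^{s}$ so that \eqref{eq:fraz} is literally the equation solved; this is standard and is essentially a tautology from the definition of $(-\Delta)^{s}$ as the generator of the subordinated heat semigroup.
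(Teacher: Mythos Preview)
Your proposal is correct and matches the paper's approach exactly: the paper simply states that ``Theorems \ref{Th:Schauder_ell} and \ref{Zygmund_ell} yield'' the result, having already verified in the preceding discussion that Hypotheses \ref{Hyp1} and \ref{Hyp2} hold with $X=H=\R^{N}$, $\omega=0$, $\theta=1/(2s)$. Your additional remarks on uniqueness and on identifying $L$ with $-(-\Delta)^{s}$ are sensible elaborations but go slightly beyond what the paper makes explicit.
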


 Theorems \ref{Th:Schauder_par} and \ref{Zygmund_par} yield
 
 \begin{Theorem}
\label{Th:SchZygfraz_par}
Let $s \in (0, 1)$, $\alpha \in [0, 1)$ be such that $\alpha + 2s\notin \{1, 2\}$, and let 
$f\in C^{\alpha + 2s}_b(\R^N)$, $g\in C^{0, \alpha}_b([0,T]\times \R^N)$ \footnote{For $\alpha =0$ we mean $C^{0,0}_b([0,T]\times \R^N) = C_b([0,T]\times \R^N) $.}. The mild solution to
\begin{equation}
\label{eq:fraz_par}
\left\{\begin{array}{l}
v_t (t,x) + (-\Delta)^{s}v (t, \cdot)(x) =  g(t,x), \quad 0\leq t\leq T, \; x\in \R^N, 
 \\
 \\
v(0, x) = f(x), \quad x\in \R^N, 
 \end{array}\right. 
\end{equation}
belongs to $C^{0, \alpha+2s}_b([0,T]\times \R^N)$, and there is $C>0$, independent of $f$ and $g$, such that 
$$\|v\|_{C^{0, \alpha+2s}_b([0,T]\times \R^N)} \leq C(\|f\|_{C^{\alpha + 2s}_b(\R^N)} + \|g\|_{C^{0, \alpha}_b([0,T]\times \R^N)}). $$

Let  $s\in (0, 1)$, $\alpha \in [0, 1)$ be such that $\alpha + 2s :=k \in \{1, 2\}$. Then for every $f\in Z^k_b(\R^N)$, $g\in C^{0, \alpha}_b([0,T]\times \R^N)$
the mild solution to \eqref{eq:fraz_par} belongs to $Z^{0, k}_b([0,T]\times \R^N)$,  and there is $C>0$, independent of $f$, such that 
$$\|v\|_{Z^{0,k}_b(\R^N)} \leq C(\|f\|_{Z^{k}_b(\R^N)} + \|g\|_{C^{0, \alpha}_b([0,T]\times \R^N)}). $$
\end{Theorem}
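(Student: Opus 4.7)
The plan is to recognize that Theorem \ref{Th:SchZygfraz_par} is a direct application of the general evolution results Theorems \ref{Th:Schauder_par} and \ref{Zygmund_par} to the setting of the subordinated fractional heat semigroup, with parameters $X = H = \R^N$, $T_t = I$, $\omega = 0$, and $\theta = 1/(2s)$. All the substantive analytic verification has already been carried out in the paragraphs leading up to Theorem \ref{Th:SchZygfraz}: the density $p_{s,t}$ of $\mu_t$ is smooth, and the bound $\|\partial_{y_k} p_{s,t}\|_{L^1(\R^N)} \leq C\, t^{-1/(2s)}$ was obtained via the self-similar scaling $p_{s,t}(y) = t^{-N/(2s)} p_{s,1}(t^{-1/(2s)} y)$ together with the finiteness of $\int_0^\infty \eta^{(s)}(\xi)\xi^{-1/2}\,d\xi$. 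Since $T_t$ is the identity, this bound is exactly the Fomin estimate \eqref{fundamental}(ii) with the claimed value of $\theta$, while \eqref{fundamental}(i) is trivial.

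With Hypotheses \ref{Hyp1}--\ref{Hyp2} thus in force, I would split the argument according to whether the target regularity exponent $\alpha + 2s$ is an integer. For the non-critical regime $\alpha + 2s \notin \{1, 2\}$ (the only possible integer values, given $\alpha \in [0,1)$ and $s\in(0,1)$), Theorem \ref{Th:Schauder_par} applies upon reading $1/\theta = 2s$: part (i) handles the endpoint $\alpha = 0$ (using the footnote convention $C^{0,0}_b = C_b$), while part (ii) handles $\alpha \in (0,1)$. In either case, $v \in C^{0,\alpha+2s}_b([0,T] \times \R^N)$ with the stated estimate is an immediate consequence of \eqref{maggSchauder_par0} or \eqref{maggSchauder_par}.

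For the critical cases $\alpha + 2s = k \in \{1,2\}$, I would analogously invoke Theorem \ref{Zygmund_par}: the endpoint $\alpha = 0$ forces $s = 1/2$ and $k = 1$, which falls under part (i), while for $\alpha \in (0,1)$ we have either $k = 1$ (with $s < 1/2$ and $\alpha = 1 - 2s$) or $k = 2$ (with $s > 1/2$ and $\alpha = 2 - 2s$), both covered by part (ii). The conclusion $v \in Z^{0,k}_b([0,T] \times \R^N)$ and the estimate in terms of $\|f\|_{Z^k_b(\R^N)}$ and $\|g\|_{C^{0,\alpha}_b([0,T]\times\R^N)}$ then follow directly from \eqref{maggZygmund_par0} or \eqref{maggZygmund_par}.

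There is essentially no analytical obstacle here: the real work sits upstream, in the semigroup derivative bounds. The only things to watch are the $\alpha = 0$ endpoint (which requires invoking the $C^{0,0}_b = C_b$ convention in order to fit into Theorem \ref{Th:Schauder_par}(i) rather than (ii)) and the correct dispatching of the finitely many critical exponents $\alpha + 2s \in \{1, 2\}$ to the Zygmund theorem rather than the H\"older one.
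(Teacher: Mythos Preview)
Your proposal is correct and matches the paper's approach exactly: the paper simply states that Theorems \ref{Th:Schauder_par} and \ref{Zygmund_par} yield the result, the verification of Hypotheses \ref{Hyp1}--\ref{Hyp2} with $X=H=\R^N$, $T_t=I$, $\omega=0$, $\theta=1/(2s)$ having been carried out in the paragraphs preceding Theorem \ref{Th:SchZygfraz}. Your case-by-case dispatching of the non-critical and critical exponents to the appropriate parts of those theorems is accurate and, if anything, more explicit than the paper itself.
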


In the non-fractional case $s=1$ the first part of the theorem is known since many years (\cite{KCL}). For $s\in (0,1)$ it seems to be new.

 \subsection{Ornstein-Uhlenbeck operators with fractional diffusion}
\label{sect:OU}

Ornstein-Uhlenbeck operators are expressed by
$$(Lu)(x) = \frac{1}{2}( \text{Tr}(QD^2u) )(x)  - \langle Bx, \nabla u(x)\rangle , \quad x\in \R^N, $$
where $Q$ is a symmetric nonnegative definite matrix and $B$ is any matrix. Under ellipticity or hypoellipticity conditions  (respectively, det$\,Q>0$   or  det $\int_0^t e^{-sB}Qe^{-sB^*}ds >0$ for every $t>0$)  we already have maximal H\"older and Zygmund regularity results, first proved in \cite{DPL} in the elliptic case and then in \cite{L} in the hypoelliptic case. 

Here we consider modified Ornstein-Uhlenbeck operators which are the object of very recent studies (e.g., \cite{HMP,AB,CMP}), 
 heuristically given by 
$$(\mathcal L u)(x) =  \frac{1}{2}( {\text Tr}^s(QD^2u) )(x)  - \langle Bx, \nabla u(x)\rangle , \quad x\in \R^N. $$
with $s\in (0,1)$ and $Q>0$.   Tr$^s(QD^2)$ is the pseudo-differential operator with symbol $- \langle Q\xi, \xi\rangle^s$. 

The realization of $\mathcal L$ in $L^2(\R^N)$ has been studied in \cite{AB} even in the hypoelliptic case, using smoothing properties of the relevant semigroup, expressed through Fourier and inverse Fourier transform as 
$$\widehat{P_t f} = e^{ t{\text Tr}B} \exp  \left(- \frac{1}{2}\int_0^t |Q^{1/2}e^{\tau B^*}\cdot|^{2s} d\tau \right) \widehat{f}(e^{tB^*}\cdot), \quad t>0, $$
where $\; \widehat{  }\;$ denotes the Fourier transform $\mathcal F$, 
$$ \widehat{f} (\xi) = (\mathcal Ff)(\xi) = \int_{\R^N} e^{-i\langle x, \xi\rangle} f(x)dx. $$
Now we rewrite $P_t $ in the form \eqref{PtRN}. Applying the inverse Fourier transform we get, for every $f\in L^2(\R^N)$, 
$$ P_t f = e^{ t{\text Tr}B} {\mathcal F}^{-1}  \left(\exp  \left(- \frac{1}{2}\int_0^t |Q^{1/2}e^{\tau B^*}\cdot|^{2s} d\tau \right) \right) \ast  {\mathcal F}^{-1}( {\mathcal F}f(e^{tB^*}\cdot))$$
where
$$ {\mathcal F}^{-1}(g(e^{tB^*}\cdot))(y) = \frac{1}{(2\pi)^N} \int_{\R^N} g(e^{tB^*}\xi)e^{i\langle \xi, y\rangle}d\xi = e^{- t{\text Tr}B}( {\mathcal F}^{-1}g)(e^{-tB}y), $$
so that
$$\begin{array}{lll} P_t f(x) & = & \ds \int_{\R^N} f(e^{-tB}x-e^{-tB}y)  {\mathcal F}^{-1} \left(e^{- \frac{1}{2}\int_0^t |Q^{1/2}e^{\tau B^*}\cdot|^{2s} d\tau }\right)(y)\,dy
\\
\\
& = &  \ds  e^{ t{\text Tr}B} \int_{\R^N} f(e^{-tB}x-z)  {\mathcal F}^{-1}  \left(e^{- \frac{1}{2}\int_0^t |Q^{1/2}e^{\tau B^*}\cdot|^{2s} d\tau }\right)(e^{tB}z)\,dz
\\
\\
& = & \ds \int_{\R^N} f(e^{-tB}x + z) g_t(z)dz, 
\end{array}$$
with
$$\begin{array}{lll}  
g_t(z)  & = & \ds e^{ t{\text Tr}B}  {\mathcal F}^{-1} \left(e^{- \frac{1}{2}\int_0^t |Q^{1/2}e^{\tau B^*}\cdot|^{2s} d\tau }\right)( -e^{tB}z) = {\mathcal F}^{-1}  \left(e^{- \frac{1}{2}\int_0^t |Q^{1/2}e^{(\tau -t) B^*}\cdot|^{2s} d\tau }\right)( - z)
\\
\\
& = & \ds  
  \frac{1}{(2\pi)^N} \int_{\R^N} e^{- \frac{1}{2}\int_0^t |Q^{1/2}e^{-\sigma B^*}\xi |^{2s} d \sigma } e^{ -i\langle \xi, z\rangle}d\xi, 
\end{array}$$
 so that  $P_t$ is represented in the form \eqref{P_t}, with $\mu_t (dx):=g_t(x)dx$.

Setting
$$\varphi_t(\xi) =  e^{- \frac{1}{2}\int_0^t |Q^{1/2}e^{-\sigma B^*}\xi |^{2s} d\sigma},\quad  \xi\in \R^N, $$
 we have
 $g_t =(2\pi)^{-N}{\mathcal F}  (\varphi_t)$.  
Since $\varphi_t(\xi) = \exp (- \int_0^t \lambda (e^{-\sigma B^*}\xi) d\sigma )$, where $\lambda (\xi) = |Q^{1/2}\xi|^{2s}/2$ is a continuous negative definite function such that  $\lambda (0)=0$, then $\mu_t $ is a probability measure, see e.g. \cite[sect. 2.1]{FR}. 
Moreover, since  the function $(t, \xi)\mapsto \varphi_t (\xi)$ is continuous in $[0, +\infty) \times \R^N$, with $ \varphi_0 (\xi)=1$ for every $\xi$, by the L\'evy  Theorem $t\mapsto {\mu}_t$ is weakly continuous, and it weakly converges to $\delta_0$ as $t\to 0$. 
Therefore,  $P_t$ is well defined in $C_b(\R^N)$ and satisfies our assumptions with $\theta = 1/(2s)$ provided
 there exist  $  \partial g_t/\partial x_k \in L^1(\R^N)$, for each $k=1, \ldots N$, and there are $C>0$, $\omega \in \R$ such that
$ \| \partial g_t/\partial x_k \|_{L^1(\R^N)}  \leq Ct^{-1/(2s)}e^{\omega t}$ for every $t>0$. This is shown in the next lemma.

\begin{Lemma} $g_t\in W^{1,1}(\R^N)$ for every $t>0$, and    we have 
\begin{equation}
\label{(0,1]} \sup_{0<t\leq 1} t^{1/(2s)} \left\| \frac{\partial g_t}{\partial x_k}\right\|_{L^1(\R^N)} <+\infty, \quad k=1, \ldots, N, 
\end{equation}
\begin{equation}
\label{>1} \sup_{t>1}  \left\| \frac{\partial g_t}{\partial x_k}\right\|_{L^1(\R^N)} <+\infty, \quad k=1, \ldots, N. 
\end{equation}
\end{Lemma}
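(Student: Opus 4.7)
My approach is to prove the small-time bound \eqref{(0,1]} by a parabolic scaling combined with uniform Fourier estimates, and then to derive \eqref{>1} from the semigroup identity \eqref{semigruppo}; the Sobolev membership $g_t \in W^{1,1}(\R^N)$ will follow from these derivative bounds together with $\|g_t\|_{L^1} = 1$.

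For $t \in (0, 1]$, I would set $\tilde g_t(z) := t^{N/(2s)} g_t(t^{1/(2s)} z)$. A change of variables in the Fourier representation shows that its Fourier transform is
$$\tilde\varphi_t(\eta) := \varphi_t(t^{-1/(2s)}\eta) = \exp\left(-\frac{1}{2t}\int_0^t |Q^{1/2} e^{-\sigma B^*}\eta|^{2s}\,d\sigma\right),$$
and the chain rule yields $\|\partial g_t/\partial x_k\|_{L^1(\R^N)} = t^{-1/(2s)}\|\partial \tilde g_t/\partial z_k\|_{L^1(\R^N)}$, reducing the task to a uniform-in-$t$ bound for the rescaled derivative. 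Since $\{e^{-\sigma B^*}:\sigma \in [0,1]\}$ is a compact set of invertible matrices and $Q > 0$, there is $c > 0$ with $|Q^{1/2}e^{-\sigma B^*}\eta|^{2s} \geq c|\eta|^{2s}$ for all such $\sigma$ and all $\eta$, hence $|\tilde\varphi_t(\eta)| \leq e^{-c|\eta|^{2s}/2}$ uniformly in $t$. Analogous compactness-based estimates applied via the chain rule give, for each multi-index $\alpha$, a polynomial $P_\alpha$ (independent of $t$) such that $|D^\alpha\tilde\varphi_t(\eta)| \leq P_\alpha(|\eta|)e^{-c|\eta|^{2s}/2}$ for $|\eta| \geq 1$, while on $|\eta| \leq 1$ the derivatives of the phase carry singularities of order $|\eta|^{2s-|\alpha|}$ that remain locally integrable at the relevant orders. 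Combining this with the elementary inequality
$$\|f\|_{L^1(\R^N)} \leq C_N\sum_{|\alpha|\leq N+1}\|D^\alpha \hat f\|_{L^1(\R^N)}$$
(which follows from $\|z^\alpha f\|_{L^\infty} \leq (2\pi)^{-N}\|D^\alpha \hat f\|_{L^1}$ together with the integrability of $(1+|z|)^{-N-1}$) applied to $f = \partial\tilde g_t/\partial z_k$, whose Fourier transform is $-i\eta_k\tilde\varphi_t(\eta)$, would yield the required uniform bound and hence \eqref{(0,1]}.

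For $t > 1$, I would write $t = (t-1) + 1$ and apply \eqref{semigruppo} to obtain
$$g_t(x) = e^{-\Tr B}\int_{\R^N} g_{t-1}(e^{-B}(x-y))\,g_1(y)\,dy.$$
Differentiating in $x_k$ (after the change of variables $y \mapsto x - y$ that places the derivative on the $g_1$ factor) and applying Young's inequality together with $\|e^{-\Tr B}g_{t-1}(e^{-B}\cdot)\|_{L^1(\R^N)} = \|g_{t-1}\|_{L^1(\R^N)} = 1$ would give $\|\partial g_t/\partial x_k\|_{L^1} \leq \|\partial g_1/\partial x_k\|_{L^1}$, which is finite by the first part, proving \eqref{>1}.

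The main technical difficulty is the uniform control of the derivatives $D^\alpha\tilde\varphi_t$ near $\eta = 0$: for $s \in (0,1)$ the phase $\eta \mapsto |Q^{1/2}e^{-\sigma B^*}\eta|^{2s}$ is only Hölder at the origin, so the higher-order derivatives of $\tilde\varphi_t$ have genuine singularities there of order $|\eta|^{2s-|\alpha|}$. Splitting the $\eta$-integrals into a small ball about the origin and its complement, and carefully verifying that the singular factors remain integrable up to the required orders, is the key technical step of the proof.
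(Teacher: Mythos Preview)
Your approach is essentially the same as the paper's: the rescaling $\tilde g_t(z)=t^{N/(2s)}g_t(t^{1/(2s)}z)$, the uniform pointwise bound $|\tilde\varphi_t(\eta)|\le e^{-c|\eta|^{2s}}$ coming from compactness of $\{e^{-\sigma B^*}:\sigma\in[0,1]\}$ and $Q>0$, the analysis of the singularities $|\eta|^{2s-|\alpha|}$ of the derivatives of the phase near the origin, and the convolution/semigroup argument for $t>1$ are all exactly what the paper does.

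The one technical difference is the Fourier-side embedding you use in the small-$t$ step. The paper controls $\|\partial\tilde g_t/\partial z_k\|_{L^1}$ by showing $\psi_t(\eta)=\eta_k\tilde\varphi_t(\eta)\in H^m(\R^N)$ for some integer $m\in(N/2,\,N/2+1+2s)$ and then applying the Cauchy--Schwarz estimate $\|\mathcal F\psi_t\|_{L^1}\le C_N\|\psi_t\|_{H^m}$; you instead propose the $L^1$-based estimate $\|f\|_{L^1}\le C_N\sum_{|\alpha|\le N+1}\|D^\alpha\hat f\|_{L^1}$. Your route requires more derivatives (order $N+1$ rather than roughly $N/2$), but the integrability check still goes through: the worst singularity of $D^\alpha(\eta_k\tilde\varphi_t)$ at the origin is $|\eta|^{2s+1-|\alpha|}$, which for $|\alpha|=N+1$ gives $|\eta|^{2s-N}\in L^1_{\mathrm{loc}}(\R^N)$ since $2s>0$. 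So both embeddings work; the paper's $H^m$ choice is slightly more economical but otherwise the arguments are parallel.
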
 
\begin{proof}
The main step is to prove that $g_t\in W^{1,1}(\R^N)$ for every $t\in (0, 1]$ and that \eqref{(0,1]} holds. 
The remaining part of the statement will be a consequence, thanks to the algebraic relations among the functions $g_t$. 

It is convenient to rewrite $g_t$ as
 \begin{equation}
\label{rescaling}
g_t(x)  =  \frac{1}{t^{N/(2s)} }\frac{1}{(2\pi)^N} \int_{\R^N} e^{- \frac{1}{2t}\int_0^t |Q^{1/2}e^{-\sigma B^*}\eta|^{2s} d \sigma } e^{-i\langle t^{-1/(2s)}\eta, x\rangle}d\eta 
= \frac{1}{t^{N/2s} }\,\widetilde{g}_t\left( \frac{x}{t^{1/(2s)}}\right), 
 \end{equation}

where 
$$\widetilde{g}_t :=  \frac{1}{(2\pi)^N}{\mathcal F}\left( e^{- \frac{1}{2t}\int_0^t |Q^{1/2}e^{-\sigma B^*}\cdot |^{2s} d\sigma } \right)=  \frac{1}{(2\pi)^N}{\mathcal F}( \widetilde{\varphi}_t ), \quad t>0, $$
with $ \widetilde{\varphi}_t  = (\varphi_t)^{1/t}$. Our aim is now to show that $\widetilde{g}_t $ is $C^1$, and 
that $ \sup_{t\in (0, 1]}\|\partial \widetilde{g}_t /\partial x_k\|_{L^1(\R^N)}  <+\infty$. In this case,  by \eqref{rescaling} $g_t$ is $C^1$ too, and 
$\partial g_t/\partial x_k (x)= t^{-(N+1)/(2s) } \partial\widetilde{g}_t /\partial x_k(t^{-1/(2s)}x)$, which yields \eqref{(0,1]}.

To prove that  $\widetilde{g}_t $ is continuously differentiable and it has  $L^1$ derivatives it is enough to show that   $\xi\mapsto \xi_k\widetilde{\varphi}_t(\xi)$
belongs to $L^1(\R^N) \cap H^m(\R^N)$ for every $k=1, \ldots, N$, with $m>N/2$. Indeed, in this case $\partial \widetilde{g}_t /\partial x_k  = i {\mathcal F}^{-1}\psi _t$ with 
$$\psi_t(\xi) := \xi_k\widetilde{\varphi}_t(\xi), $$  
and 
$$\begin{array}{lll}
\ds  \int_{\R^N}\bigg| \frac{\partial \widetilde{g}_t}{\partial x_k}\bigg| dx & = & \ds \int_{\R^N} |{\mathcal F}^{-1}\psi_t (x)|dx = \frac{1}{(2\pi)^N}  |{\mathcal F} \psi _t(-x)|dx
\\
\\
& = & \ds  \frac{1}{(2\pi)^N}  \int_{\R^N} |{\mathcal F} \psi _t(x)|(1+|x|^2)^{m/2} \frac{1}{(1+|x|^2)^{m/2} }dx
\\
\\
&
\leq & \ds \frac{1}{(2\pi)^N}\| {\mathcal F} \psi _t (1+|\cdot|^2)^{m/2} \|_{L^2(\R^N)} \bigg( \int_{\R^N}\frac{1}{(1+|x|^2)^{m} }dx\bigg)^{1/2}
\\
\\
& 
\leq & C_N\|\psi_t\|_{H^m(\R^N)}. 
\end{array}$$
So, the rest of the proof of the differentiability of $\widetilde{g}_t$ for $t\in (0, 1]$ and of
 \eqref{(0,1]} is devoted to show that  $\psi_t\in L^1(\R^N) \cap H^m(\R^N)$ with $m>N/2$, and with $H^m$ norm bounded by a constant independent of $t$. As a first step, we observe that there exists $c>0$ such that 
\begin{equation}
\label{stimavarphi}
\widetilde{\varphi}_t(\xi) \leq e^{-c|\xi|^{2s}}, \quad 0<t\leq 1, \;\xi\in \R^N. 
\end{equation}
Indeed, let $M_B>0$, $\omega_B\in \R$ be such that $\|e^{tB^*}\|\leq M_Be^{\omega_B t}$ for every $t>0$. For every $\xi\in \R^N$ and $\sigma \in [0,1]$ we have 
$\xi = e^{\sigma  B^*}Q^{-1/2} Q^{1/2}e^{-\sigma B^*}\xi$, so that $\|\xi\| \leq  M_Be^{\omega_B \sigma }\|Q^{-1/2}\| \, \| Q^{1/2}e^{-\sigma B^*}\xi\|$, and therefore
$ \| Q^{1/2}e^{-\sigma B^*}\xi\| \geq \|\xi\|/\kappa$, with $\kappa = \min_{\tau \in [0,1]}M_Be^{\omega_B \tau}\|Q^{-1/2}\|$. Estimate \eqref{stimavarphi} holds with $c= 1/(2\kappa^{2s})$; it implies that $\psi_t \in L^1(\R^N)\cap L^2(\R^N)$, with $L^1$ and $L^2$ norms bounded by constants independent of $t$. 

To estimate the derivatives of $\psi_t$ we write it as $\psi _t(\xi)  = \xi_k e^{f_t(\xi)}$, where
$$f_t(\xi):= -\frac{1}{2t} \int_0^t|Q^{1/2}e^{-\sigma B^*}\xi |^{2s}   d\sigma    , \quad \xi\in \R^N. $$
The function $\theta(\sigma, \xi) := |Q^{1/2}e^{-\sigma B^*}\xi |^{2s}$ belongs to $C^{\infty}( \R \times (\R^N\setminus\{0\}))$, and therefore $  f_t\in C^{\infty}(  \R^N\setminus\{0\})$ for every $t>0$, and for every multi-index $\alpha$ we have
$$D^{\alpha}f_t = - \frac{1}{2t} \int_0^t D^{\alpha} (|Q^{1/2}e^{-\sigma B^*}\cdot |^{2s} )   d\sigma    . $$
Since for every $\sigma\in \R$ the function $\theta(\sigma, \cdot) $ is homogeneous with degree $2s$, its $j$-th order derivatives are homogeneous with degree $2s-j$; therefore for every multi-index $\alpha$ and $\xi\neq 0$ we have
$$|D^{\alpha}_{\xi}\theta (\sigma, \xi)| = \bigg| D^{\alpha}_{\xi}\theta\left(\sigma,  \frac{\xi}{|\xi|}  \right)\bigg| \, |\xi|^{2s-|\alpha|} \leq  \max \{ |D^{\alpha}_\xi \theta (\sigma, y)|:\; |y|=1\} |\xi|^{2s-|\alpha|} , $$
and consequently, for every $t\in (0, 1]$, 
 $$|D^{\alpha}f_t (  \xi)|  \leq \frac{1}{2} \sup_{0\leq \sigma\leq 1} |D^{\alpha}_{\xi}\theta (\sigma, \xi)| \leq \frac{1}{2} \max \{ |D^{\alpha}_\xi \theta (\sigma,  y)|:\;\sigma\in [0, 1], \; |y|=1\}  |\xi|^{2s-|\alpha|} =: K_{\alpha} |\xi|^{2s-|\alpha|}. $$
For every multi-index $\alpha$, $D^{\alpha} \widetilde{\varphi}_t = D^{\alpha}e^{f_t}$ is a linear combination of functions such as $e^{f_t} D^{\alpha_1}{f _t}\cdot \ldots 	\cdot D^{ \alpha_j}f_t$, where $j\in \{1, \ldots, |\alpha|\}$,  $\alpha_1, \ldots ,\alpha_j \in \N$ and $\sum \alpha_j = |\alpha|$. By the above estimates,  
$$e^{f_t(\xi)} D^{\alpha_1}f (\xi)\cdot \ldots 	\cdot D^{\alpha_j }f_t(\xi)\leq K_{\alpha_1}|\xi|^{2s-\alpha_1}\cdot \ldots \cdot  K_{ \alpha_j}|\xi|^{2s- \alpha_j} \widetilde{\varphi}_t (\xi)$$
and therefore
$$|D^{\alpha}  \widetilde{\varphi}_t (\xi)| \leq \sum_{j=1}^{|\alpha|} c_j |\xi|^{2sj-|\alpha|} e^{-c|\xi|^{2s}}, \quad \xi\neq 0, $$
with suitable coefficients $c_j$. It follows that for every $\eps \in (0, c)$ there exists $c_{\eps, |\alpha|}>0$ such that 
\begin{equation}
\label{derivate_varphi}
|D^{\alpha}  \widetilde{\varphi}_t  (\xi)| \leq c_{\eps, |\alpha| } e^{-\eps |\xi|^{2s}}  |\xi|^{2s-|\alpha|}, \quad \xi\neq 0. 
\end{equation}
Now, $D^{\alpha} \psi_t (\xi)$ is equal to $\xi_k D^{\alpha}  \widetilde{\varphi}_t (\xi)$ plus a linear combination of derivatives of $ \widetilde{\varphi}_t $ of order $|\alpha|-1$. Therefore, 
$$|D^{\alpha} \psi_t (\xi)| \leq  c_{ \eps, |\alpha|} e^{-\eps |\xi|^{2s}}  |\xi|^{2s-|\alpha|+1} +  C e^{-\eps |\xi|^{2s}}  |\xi|^{2s-|\alpha|}|\xi| = \widetilde{C}_{ \eps, |\alpha|} e^{-\eps |\xi|^{2s}}  |\xi|^{2s-|\alpha|+1}. $$
Consequently, $D^{\alpha} \psi _t\in L^2(\R^N)$ provided $\xi\mapsto e^{-\eps |\xi|^{2s}}  |\xi|^{2s-|\alpha|+1}\in L^2(\R^N)$, which is satisfied if $2(2s-|\alpha|+1)>-N$. 
It follows that $\psi_t \in H^m(\R^n)$ if  $2(2s - m+1)>-N$, namely $m < 2s +1 +N/2$. 
We recall that we need $m>N/2$. Since $2s+1>1$, the interval $(N/2, 2s +1 +N/2)$ contains at least one  integer $m$. For such $m$, $\psi_t \in H^m(\R^n)$ 
and $\|\psi_t\|_{H^m(\R^N)} $ is bounded by a constant independent of $t\in (0, 1]$, so that  \eqref{(0,1]} follows.

\vspace{3mm}

To prove \eqref{>1} we argue as in the proof of estimates  \eqref{nth-derivatives1}, \eqref{alpha-alpha} for large $t$. We use the semigroup property $P_t \circ P_s = P_{t+s}$ for $t$, $s>0$, which may be rewritten as 
$$g_{t+s}(x) = \int_{\R^N} g_s(x-e^{-sB}y)g_t(y)dy, \quad t, \; s>0, \; x\in \R^N. $$
In particular, for $t>1$ we get 
$$g_{t}(x) = \int_{\R^N} g_1(x-e^{-B}y) g_{t-1} (y)dy,  \quad x\in \R^N. $$
From the first part of the proof we know that $g_1$ is continuously differentiable. So,  $g_t$ is continuously differentiable and for every $k=1, \ldots, N$ we have 
$$\frac{\partial g_t }{\partial x_k}  (x) = \int_{\R^N} \frac{\partial g_1}{\partial x_k}(x-e^{-B}y)g_{t-1}(y)dy,  \quad x\in \R^N, $$
which implies (recalling that $\|g_{t-1}\|_{L^1(\R^N)} = 1$)
$$\left\| \frac{\partial g_t }{\partial x_k}\right\|_{L^1(\R^N)} \leq \left\| \frac{\partial g_1}{\partial x_k}\right\|_{L^1(\R^N)} , \quad t>1, $$
and \eqref{>1} follows. 

\vspace{3mm}


 \end{proof}

Applying Theorems \ref{Th:Schauder_ell} and \ref{Zygmund_ell} we   extend the results  of Theorem \ref{Th:SchZygfraz}. 

\begin{Theorem}
\label{Thm:OUfraz_ell}
Let $f\in C_b(\R^N)$ and $\lambda >0$, $s\in (0,1 )\setminus \{1/2 \}$. Then the equation
\begin{equation}
\label{eq:OUfraz}
\lambda u - L u = f
\end{equation}
has a unique solution $u\in C^{2s}_b(\R^N)$, and there is $C>0$, independent of $f$, such that 
$$\|u\|_{C^{2s}_b(\R^N)} \leq C\|f\|_{\infty}. $$
If $s=1/2$, equation \eqref{eq:OUfraz} has a unique solution in $Z^1(\R^N)$,  and there is $C>0$, independent of $f$, such that 
$$\|u\|_{Z^1(\R^N)} \leq C\|f\|_{\infty}. $$
If in addition $f\in C^{\alpha}_b(\R^N)$ with $\alpha \in (0, 1)$ and $\alpha + 2s \notin \{1, 2\}$, then $u\in C^{\alpha + 2s}_b(\R^N)$  and there is $C>0$, independent of $f$, such that 
$$\|u\|_{C^{\alpha + 2s}_b(\R^N)} \leq C\|f\|_{ C^{\alpha}_b(\R^N)}. $$
If $\alpha + 2s = k\in \{1, 2\}$, then $u\in Z^{k}_b(\R^N)$  and there is $C>0$, independent of $f$, such that 
$$\|u\|_{Z^{k}_b(\R^N)} \leq C\|f\|_{ C^{\alpha}_b(\R^N)}. $$
\end{Theorem}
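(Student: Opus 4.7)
The plan is to reduce the statement to a direct application of the abstract Theorems \ref{Th:Schauder_ell} and \ref{Zygmund_ell}, once Hypotheses \ref{Hyp1} and \ref{Hyp2} are verified for the semigroup $P_t$ associated with $L$, with $H=X=\R^N$ and $\theta=1/(2s)$.

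First I would recall from the derivation above \eqref{rescaling} that $P_t$ admits the representation \eqref{P_t} with $T_t=e^{-tB}$ and $\mu_t(dx)=g_t(x)\,dx$. Since the preceding Lemma shows $g_t\in W^{1,1}(\R^N)$ for each $t>0$, $\mu_t$ is Fomin differentiable along every $v\in\R^N$ by integration by parts, with logarithmic derivative $\beta^{\mu_t}_v=\langle\nabla g_t,v\rangle/g_t$, so
$$\|\beta^{\mu_t}_v\|_{L^1(\R^N,\mu_t)}=\int_{\R^N}|\langle\nabla g_t,v\rangle|\,dx\leq\sum_{k=1}^N\Bigl\|\frac{\partial g_t}{\partial x_k}\Bigr\|_{L^1(\R^N)}|v|.$$
Hypothesis \ref{Hyp1} holds with $H_t=\R^N$. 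Applying this to $v=T_th=e^{-tB}h$ and combining the uniform bound $\|T_t\|\leq Me^{\omega_B t}$ with the bounds \eqref{(0,1]} on $(0,1]$ and \eqref{>1} on $(1,\infty)$, I get constants $C,M>0$ and $\omega\in\R$ with
$$\|T_th\|\leq Me^{\omega t}\|h\|,\qquad \|\beta^{\mu_t}_{T_th}\|_{L^1(\mu_t)}\leq\frac{Ce^{\omega t}}{t^{1/(2s)}}\|h\|,\qquad t>0,\ h\in\R^N,$$
which is exactly Hypothesis \ref{Hyp2} with $\theta=1/(2s)$.

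With these verifications in hand, the four assertions of the theorem correspond verbatim to the four cases of the abstract theorems, observing that $H=X=\R^N$ makes the spaces $C^{\sigma}_H(\R^N)$, $Z^k_H(\R^N)$ collapse to the standard $C^\sigma_b(\R^N)$, $Z^k_b(\R^N)$, and that $1/\theta=2s$. Precisely: when $2s\notin\N$ and $f\in C_b(\R^N)$, Theorem \ref{Th:Schauder_ell}(i) gives $u=R(\lambda,L)f\in C^{2s}_b(\R^N)$ with the required bound; the case $s=1/2$ falls under Theorem \ref{Zygmund_ell}(i) and yields $u\in Z^1(\R^N)$; for $f\in C^\alpha_b(\R^N)$ with $\alpha+2s\notin\{1,2\}$, Theorem \ref{Th:Schauder_ell}(ii) gives $u\in C^{\alpha+2s}_b(\R^N)$; and for $\alpha+2s=k\in\{1,2\}$, Theorem \ref{Zygmund_ell}(ii) gives $u\in Z^k_b(\R^N)$. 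Uniqueness in each case is automatic since $R(\lambda,L)$ is the resolvent of a closed operator.

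The only nontrivial step is the second paragraph above, but all the analytic work has already been done in the preceding Lemma; what remains is purely bookkeeping, namely checking that the dependence of $\partial_{T_th}g_t$ on $h$ is linear with the right operator norm and that the two separate bounds \eqref{(0,1]} and \eqref{>1} combine into a single exponential-in-$t$ bound of the form required by \eqref{fundamental}(ii). No additional estimate beyond those already proved for $g_t$ is needed.
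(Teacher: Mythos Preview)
Your proposal is correct and follows exactly the paper's approach: the paper simply states that Theorem~\ref{Thm:OUfraz_ell} is obtained by applying Theorems~\ref{Th:Schauder_ell} and~\ref{Zygmund_ell}, the required Hypotheses~\ref{Hyp1} and~\ref{Hyp2} with $H=X=\R^N$ and $\theta=1/(2s)$ having been verified in the discussion and Lemma preceding the theorem. Your write-up just makes the bookkeeping (Fomin differentiability of $\mu_t=g_t\,dx$, the combination of \eqref{(0,1]} and \eqref{>1} into a single bound of the form \eqref{fundamental}(ii), and the case-by-case matching with the abstract theorems) more explicit than the paper does.
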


Applying   Theorems \ref{Th:Schauder_par} and \ref{Zygmund_par} we extend the results of Theorem \ref{Th:SchZygfraz_par}. 
 
 \begin{Theorem}
\label{Thm:OUfraz_par}
Let $s\in (0, 1)$, $\alpha \in [0, 1)$ be such that $\alpha + 2s\notin \{1, 2\}$, and let 
$f\in C^{\alpha + 2s}_b(\R^N)$, $g\in C^{0, \alpha}_b([0,T]\times \R^N)$. The mild solution to
\begin{equation}
\label{eq:fraz_parOU}
\left\{\begin{array}{l}
v_t (t,x) = L v (t, \cdot)(x) +  g(t,x), \quad 0\leq t\leq T, \; x\in \R^N, 
 \\
 \\
v(0, x) = f(x), \quad x\in \R^N, 
 \end{array}\right. 
\end{equation}
belongs to $C^{0, \alpha+2s}_b([0,T]\times \R^N)$, and there is $C>0$, independent of $f$ and $g$, such that 
$$\|v\|_{C^{0, \alpha+2s}_b([0,T]\times \R^N)} \leq C(\|f\|_{C^{\alpha + 2s}_b(\R^N)} + \|g\|_{C^{0, \alpha}_b([0,T]\times \R^N)}). $$

Let  $s\in (0, 1)$, $\alpha \in [0, 1)$ be such that $\alpha + 2s :=k \in \{1, 2\}$. Then for every $f\in Z^k_b(\R^N)$, $g\in C^{0, \alpha}_b([0,T]\times \R^N)$
the mild solution to \eqref{eq:fraz_parOU} belongs to $Z^{0, k}_b([0,T]\times \R^N)$,  and there is $C>0$, independent of $f$, such that 
$$\|v\|_{Z^{0,k}_b(\R^N)} \leq C(\|f\|_{Z^{k}_b(\R^N)} + \|g\|_{C^{0, \alpha}_b([0,T]\times \R^N)}). $$
\end{Theorem}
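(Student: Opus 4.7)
The plan is to obtain this theorem as an immediate application of the general evolution-type maximal regularity results, Theorems \ref{Th:Schauder_par} and \ref{Zygmund_par}, specialized to the present finite-dimensional setting. The only thing that needs checking is that Hypotheses \ref{Hyp1} and \ref{Hyp2} are satisfied with $X = H = \R^N$ and the scaling exponent $\theta = 1/(2s)$, since then $1/\theta = 2s$ and $\alpha + 1/\theta = \alpha + 2s$, which matches the statement, and moreover $C^{k+\alpha}_H(\R^N) = C^{k+\alpha}_b(\R^N)$, $Z^k_H(\R^N) = Z^k_b(\R^N)$, and $C^{0,\alpha}_H([0,T]\times\R^N) = C^{0,\alpha}_b([0,T]\times\R^N)$.

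First I would assemble the pieces that have been established in the preceding discussion of $\mathcal L$. The semigroup $P_t$ has been put into the Mehler form \eqref{P_t} with $T_t = e^{-tB}$ and $\mu_t(dy) = g_t(y)\,dy$, where $g_t = (2\pi)^{-N}\mathcal F(\varphi_t)$ with $\varphi_t(\xi) = \exp(-\tfrac{1}{2}\int_0^t |Q^{1/2}e^{-\sigma B^*}\xi|^{2s}\,d\sigma)$. Since $\varphi_t$ is the characteristic function of the probability measure $\mu_t$, the L\'evy continuity theorem gives the weak continuity of $t \mapsto \mu_t$, with $\mu_0 = \delta_0$. Standard exponential-matrix estimates provide constants $M,\omega$ with $\|e^{-tB}h\| \leq M e^{\omega t}\|h\|$ for all $h \in \R^N$, which yields Hypothesis \ref{Hyp2}(i) with $H = \R^N$.

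The key point is Hypothesis \ref{Hyp2}(ii), for which one uses the lemma proved just before the theorem: $g_t \in W^{1,1}(\R^N)$ for every $t > 0$, with
\begin{equation*}
\Big\|\tfrac{\partial g_t}{\partial x_k}\Big\|_{L^1(\R^N)} \leq C\, e^{\omega t}\, t^{-1/(2s)}, \qquad t > 0, \quad k = 1,\dots,N,
\end{equation*}
after absorbing the split estimates \eqref{(0,1]} and \eqref{>1} into one exponential bound. Since $g_t$ is a positive $W^{1,1}$ density, $\mu_t$ is Fomin differentiable in every direction $v \in \R^N$, with logarithmic derivative $\beta^{\mu_t}_v = (\partial g_t/\partial v)/g_t$ defined $\mu_t$-a.e., and by Fubini $\|\beta^{\mu_t}_v\|_{L^1(\R^N,\mu_t)} = \|\partial g_t/\partial v\|_{L^1(\R^N)}$. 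Applied to $v = T_t h = e^{-tB}h$ this gives Hypothesis \ref{Hyp2}(ii) with $\theta = 1/(2s)$ (up to adjusting the constant $\omega$ to absorb the polynomial growth of $\|e^{-tB}\|$).

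With Hypotheses \ref{Hyp1} and \ref{Hyp2} in force, Theorem \ref{Th:Schauder_par}(ii) applied to the operator $L = \mathcal L$ with exponent $1/\theta = 2s$ yields the noncritical Schauder statement of the theorem when $\alpha + 2s \notin \{1,2\}$ (the case $\alpha = 0$ coming from part (i) of that theorem), and Theorem \ref{Zygmund_par} yields the Zygmund statement in the critical cases $\alpha + 2s = k \in \{1,2\}$. No real obstacle arises here: all the technical work has already been carried out in the preceding lemma on $\|\partial g_t/\partial x_k\|_{L^1}$, and the remaining step is purely a translation between the hypotheses of the abstract theorems and the concrete data of the fractional Ornstein--Uhlenbeck Cauchy problem.
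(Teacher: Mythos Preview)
Your proposal is correct and follows exactly the paper's approach: the theorem is stated as a direct application of Theorems \ref{Th:Schauder_par} and \ref{Zygmund_par}, once the preceding lemma on $\|\partial g_t/\partial x_k\|_{L^1(\R^N)}$ has verified Hypotheses \ref{Hyp1} and \ref{Hyp2} with $H=X=\R^N$ and $\theta = 1/(2s)$. The paper does not write out a separate proof beyond the sentence ``Applying Theorems \ref{Th:Schauder_par} and \ref{Zygmund_par} we extend the results of Theorem \ref{Th:SchZygfraz_par}''; your write-up simply makes that implicit verification explicit.
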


The results of Theorem \ref{Thm:OUfraz_ell} seem to be new. A part of them, in the case $\alpha\in (0,1)$, $s\geq 1/2$, $1<  \alpha + 2s <2$, was proved 
in \cite{P} for a similar operator $\mathcal L$, with $Bx$ replaced by $b(x)$ in the drift,  $b\in C^{\alpha}_b(\R^N; \R^N)$. 
Concerning Theorem \ref{Thm:OUfraz_par}, in the case that $\alpha\in (0, 1)$, $s<1/2$, $\alpha + 2s\in (1,2)$, a similar result has been recently obtained in \cite{CMP} for a more general class of operators with suitable nonlinear and time dependent drift coefficients. 

 \section{Examples in infinite dimension}
 \label{sect:infinite}
 
\subsection{Ornstein-Uhlenbeck operators}
\label{sect:OUinf_dim}

In this section we deal with the case that $X$ is an infinite dimensional separable Banach space and the measures $\mu_t$ are Gaussian and centered (i.e.   with zero mean). 

For the general theory of Gaussian measures in Banach spaces we refer to \cite{Boga}. In particular, we recall that  every centered Gaussian measure $\gamma$  is Fomin differentiable along every  $h$ in the Cameron-Martin space $H_{\gamma}$, and the Fomin derivative $\beta^{\gamma}_h$  belongs to $L^{p}(X, \gamma)$ for every $p\in [1, +\infty)$ and satisfies 
%
%
\begin{equation}
\label{FominH}
\|\beta^{\gamma}_h\|_{L^{p}(X, \gamma)} = \left(\frac{1}{\sqrt{2\pi}} \int_{\R} |\xi|^{p} e^{-\xi^2 \|h\|_{H_\gamma}^2 /2} d\xi \right)^{1/p}=:   c_p
\|h\|_{H_\gamma}, 
\end{equation}
 with $c_1= \sqrt{2/\pi}$.  

The first  Schauder type theorems in the literature are in \cite{CDP}, \cite[Ch. 6]{DPZ}, concerning smoothing Ornstein-Uhlenbeck operators in a Hilbert setting.  We recall that if $X$ is a Hilbert space, for every centered Gaussian measure $\gamma$ with covariance $Q$, the relevant Cameron-Martin space $H_\gamma $ is the range of $Q^{1/2}$, with norm $\|h\|_{H_\gamma } = \|Q^{-1/2}h\|$ where $Q^{-1/2} $  is the pseudo-inverse of $Q^{1/2} $.

The assumptions to obtain (in all directions)  smoothing  Ornstein-Uhlenbeck semigroups are the following. 

\begin{Hypothesis}
\label{Hyp_DPZ}
$X$ is a separable Hilbert space,  $A:D(A)\subset X\mapsto X$ is the infinitesimal generator of a strongly continuous  semigroup $e^{tA}$,    and $Q\in \mathcal L(X)$ is a self-adjoint nonnegative operator, such that the operators defined by 
\begin{equation}
\label{Q_t} Q_t:= \int_0^t e^{sA}Qe^{sA^*}ds, \quad t>0
\end{equation}
have finite trace for every $t>0$. Moreover, $e^{tA}$ maps $X$ into $Q_t^{1/2}(X)$ for every $t>0$. \end{Hypothesis}

 The relevant Ornstein-Uhlenbeck semigroup is given by
\begin{equation}
 P_tf(x) = \int_X f(e^{tA}x+y)\mu_t(dy), \quad f\in  \mathcal B_b(X),\;  x\in X,
  \label{sgrOU}
 \end{equation}
  where 
 $$\mu_t = \mathcal N(0, Q_t), \quad t>0, $$
is the Gaussian measure in $X$ with mean $0$ and covariance $Q_t$. In this case $P_t$ is strong Feller, namely it maps $ \mathcal B_b(X)$   into $C_b(X)$. In fact, it maps $ \mathcal B_b(X)$  into $C^k_b(X)$ for every $k\in \N$ (\cite[Thm. 6.2.2]{DPZ}). Our $L$ is a realization of the operator $\mathcal L$ defined by
\begin{equation}\label{HypRealiz}
\mathcal L u(x) = \frac{1}{2} \text{Tr} (QD^2u(x)) + \langle x, A^*\nabla u(x)\rangle ,
\end{equation}
see \cite[Sect. 6.1]{DPZ}. 

Under Hypothesis \ref{Hyp_DPZ}, Hypothesis \ref{Hyp1} is satisfied with $H=X$, $H_t =Q_t^{1/2}(X)$, and  Hypothesis \ref{Hyp2}(i) holds, since $e^{tA}$  is a strongly continuous semigroup  on $X$.  But also   Hypothesis \ref{Hyp2}(ii)   is satisfied provided there exist $\omega \in \R$, $C$, $M$, $\theta >0$ such that 
\begin{equation}
\label{Lambda_t} \|Q_t^{-1/2}e^{tA}\|_{\mathcal L(X)} \leq \frac{Ce^{\omega t}}{t^{\theta}}, \quad t>0. 
\end{equation}
Indeed, in this case for every $h\in X$ and $t>0$, $p\geq 1$ we have $\|\beta_{e^{tA}h}^{\mu_t } \|_{L^p(X, \mu_t)} \leq   
c_p \|e^{tA}h\|_{Q_t^{1/2}(X)} \leq   c_pC  e^{\omega t}t^{-\theta} \|h\|$, thanks to \eqref{FominH} and \eqref{Lambda_t}. Taking $p=1$ yields that  Hypothesis \ref{Hyp2}(ii)   is satisfied; taking $p>1$ by Remark \ref{Rem:Frechet} the space derivatives in the statements of next Theorems  \ref{Th:DPZgen} and \ref{Th:DPZgen_par} are Fr\'echet derivatives instead of mere Gateaux derivatives.

Examples where \eqref{Lambda_t} is satisfied  are in   \cite{DPZ} (see Appendix B and Example 6.2.11).    One of them is considered in the next subsection. 

The corresponding Schauder and Zygmund regularity results in the stationary case are the following.  

\begin{Theorem}
\label{Th:DPZgen}
Let Hypotheses  \ref{Hyp_DPZ} and \eqref{Lambda_t} hold, and assume that $1/\theta \notin \N$. For every 
$f\in C_b(X)$ and $\lambda >0$,  the equation
\begin{equation}
\label{eq:OUsmoothing}
\lambda u - L u = f
\end{equation}
has a unique solution $u\in C^{1/\theta}_b(X)$, and there is $C>0$, independent of $f$, such that 
$$\|u\|_{C^{1/\theta}_b(X)} \leq C\|f\|_{\infty}. $$
If Hypotheses  \ref{Hyp_DPZ} and \eqref{Lambda_t} hold and $1/\theta \in\N$, equation \eqref{eq:OUsmoothing} has a unique solution in $Z^{1/\theta}(X)$,  and there is $C>0$, independent of $f$, such that 
$$\|u\|_{Z^{1/\theta}(X)} \leq C\|f\|_{\infty}. $$
If in addition $f\in C^{\alpha}_b(X)$ with $\alpha \in (0, 1)$ and $\alpha +1/\theta  \notin \N$, then $u\in C^{\alpha + 1/\theta }_b(X)$  and there is $C>0$, independent of $f$, such that 
$$\|u\|_{C^{\alpha + 1/\theta }_b(X)} \leq C\|f\|_{ C^{\alpha}_b(X)}. $$
If $\alpha + 1/\theta = k\in \N$, then $u\in Z^{k}_b(X)$  and there is $C>0$, independent of $f$, such that 
$$\|u\|_{Z^{k}_b(X)} \leq C\|f\|_{ C^{\alpha}_b(X)}. $$
\end{Theorem}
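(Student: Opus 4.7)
The plan is to reduce the statement to a direct application of Theorem \ref{Th:Schauder_ell} and Theorem \ref{Zygmund_ell}, by verifying that Hypotheses \ref{Hyp1} and \ref{Hyp2} hold in the present setting with $H=X$, with the given $\theta$, and with some $\omega \in \R$ coming from the growth of $e^{tA}$.

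First I would check Hypothesis \ref{Hyp1}. Since $\mu_t = \mathcal N(0,Q_t)$ is a centered Gaussian measure on the Hilbert space $X$, the general theory of Gaussian measures ensures that $\mu_t$ is Fomin differentiable along every vector in its Cameron–Martin space $H_t := Q_t^{1/2}(X)$, which by Hypothesis \ref{Hyp_DPZ} is a nonzero subspace of $X$. This is exactly Hypothesis \ref{Hyp1}.

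Next I would verify Hypothesis \ref{Hyp2} with $H=X$. Part (i) is automatic: $T_t = e^{tA}$ is a $C_0$-semigroup on $X$, so $T_t(X)\subset X$ and there exist $M\geq 1$, $\omega\in\R$ with $\|e^{tA}\|_{\mathcal L(X)}\le Me^{\omega t}$ for every $t>0$. For part (ii), Hypothesis \ref{Hyp_DPZ} gives $e^{tA}(X)\subset Q_t^{1/2}(X) = H_t$ for each $t>0$, so $T_t h\in H_t$ for every $h\in X = H$, and the Fomin derivative $\beta^{\mu_t}_{T_th}$ is therefore well defined. Applying identity \eqref{FominH} with $p=1$ to the Gaussian $\mu_t$ whose Cameron–Martin norm of $T_th$ is $\|Q_t^{-1/2}e^{tA}h\|$, together with assumption \eqref{Lambda_t}, yields
\begin{equation*}
\|\beta^{\mu_t}_{T_th}\|_{L^1(X,\mu_t)} = c_1\|Q_t^{-1/2}e^{tA}h\| \le c_1\|Q_t^{-1/2}e^{tA}\|_{\mathcal L(X)}\|h\| \le \frac{c_1 C e^{\omega t}}{t^{\theta}}\|h\|,
\end{equation*}
which is precisely the estimate required by Hypothesis \ref{Hyp2}(ii) (with constant $c_1 C$ in place of $C$ and the same $\theta$ and $\omega$, up to relabelling).

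Finally, one has to identify the operator $L$ constructed through the resolvent formula \eqref{L} with a realization of the formal expression \eqref{HypRealiz}; this is standard and has been carried out in \cite[Sect.~6.1]{DPZ}, so I would simply quote it. Once Hypotheses \ref{Hyp1} and \ref{Hyp2} are in place with $H=X$ and the given $\theta$, the four statements of the theorem follow verbatim: the non-critical H\"older regularity assertions come from Theorem \ref{Th:Schauder_ell}(i)--(ii), and the two critical cases $1/\theta\in\N$ and $\alpha+1/\theta\in\N$ come from Theorem \ref{Zygmund_ell}(i)--(ii). There is no real obstacle: the only place where one must be a bit careful is reading off the correct dependence of $\omega$, $C$ and $\theta$ in the Fomin derivative estimate from \eqref{Lambda_t} via \eqref{FominH}, since $H$ here coincides with the whole space $X$ rather than with the (generally smaller) Cameron–Martin spaces $H_t$.
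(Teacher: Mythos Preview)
Your proposal is correct and follows essentially the same route as the paper: the paper verifies Hypotheses \ref{Hyp1} and \ref{Hyp2} with $H=X$, $H_t=Q_t^{1/2}(X)$, using \eqref{FominH} together with \eqref{Lambda_t} to obtain the $L^1$ estimate on $\beta^{\mu_t}_{e^{tA}h}$, and then invokes Theorems \ref{Th:Schauder_ell} and \ref{Zygmund_ell}. The only trivial point to note is that the $\omega$ from the semigroup bound $\|e^{tA}\|\le Me^{\omega t}$ and the $\omega$ in \eqref{Lambda_t} need not coincide a priori, but one simply takes the larger of the two.
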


The Schauder part of this result  was stated in \cite{CDP}, \cite[Sect. 6.4.1]{DPZ} in the case $Q=I$, $A$ of negative type, and $\theta= 1/2$; see also \cite{DP} for further estimates in such a case.
It was extended in \cite{C} to Ornstein-Uhlenbeck semigroups arising as transition semigroups of some stochastic PDEs, with $X=L^2(\Omega)$, $\Omega$ being a bounded open set in $\R^N$. In this case, $A$ is the realization of a second order elliptic differential operator in $X$ and  $\theta =1/2$.

In the evolution case  Theorems \ref{Th:Schauder_par} and \ref{Zygmund_par} yield

\begin{Theorem}
\label{Th:DPZgen_par}
Let Hypotheses  \ref{Hyp_DPZ} and \eqref{Lambda_t} hold, and let $T>0$. For every $f\in C_b(X)$, $g\in C_b([0,T]\times X)$ let $v$ be the mild solution to 
$$
\left\{ \begin{array}{l}
v_t(t,x)  = Lv(t,x)  + g(t,x), \quad t\in [0,T], \; x\in X, 
\\
\\
v(0, \cdot) = f.  \end{array}\right. 
$$
 \begin{itemize}
 \item[(i)] If $1/\theta \notin \N$ and $f\in C^{1/\theta}_b(X)$ then $v\in C^{0, 1/\theta} _b([0,T]\times X)$. There exists $C= C(T)>0$, independent of $f$ and $g$, such that 
$$ \|v\|_{ C^{0, 1/\theta}_b ([0,T]\times X)} \leq C ( \|f\|_{C^{1/\theta}_b(X)} + \|g\|_{\infty}). $$
\item[(ii)] If $\alpha\in (0,1)$ and  $\alpha + 1/\theta \notin \N$, $f\in C^{\alpha + 1/\theta}_b(X)$ and $g\in C^{0, \alpha}([0,T]\times X)$
   then $v\in C^{0,\alpha + 1/\theta} _b([0,T]\times X)$. There exists $C= C(T, \alpha)>0$, independent of $f$ and $g$, such that 
$$ \|v\|_{ C^{0,\alpha + 1/\theta}_b([0,T]\times X)} \leq C  ( \|f\|_{ C^{\alpha + 1/\theta }_b(X)} + \|g\|_{C^{0,\alpha  }_b ([0,T]\times X)} ). $$
\end{itemize}
\end{Theorem}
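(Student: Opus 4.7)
The plan is to deduce this theorem as a direct specialization of Theorem \ref{Th:Schauder_par} to the current setting, by checking that Hypotheses \ref{Hyp1} and \ref{Hyp2} hold with $H=X$ and with the exponent $\theta$ appearing in \eqref{Lambda_t}. Once this is established, both (i) and (ii) follow immediately, since the spaces $C^{\beta}_H(X)$ and $C^{0,\beta}_H([0,T]\times X)$ reduce to the usual $C^{\beta}_b(X)$ and $C^{0,\beta}_b([0,T]\times X)$ when $H=X$.

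First, I would verify Hypothesis \ref{Hyp1}: by the classical theory of Gaussian measures (recalled at the start of Section \ref{sect:OUinf_dim} from \cite{Boga}), the centered Gaussian measure $\mu_t=\mathcal N(0,Q_t)$ is Fomin differentiable along every vector in its Cameron-Martin space $H_t:=Q_t^{1/2}(X)$, which is nontrivial for every $t>0$ by Hypothesis \ref{Hyp_DPZ}. Next, I would verify Hypothesis \ref{Hyp2} with $H=X$. Part (i) reduces to the strong continuity of $T_t=e^{tA}$ on $X$, giving $\|T_t h\|_X\le Me^{\omega t}\|h\|_X$ for suitable $M\geq 1$ and $\omega\in \R$. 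Part (ii) has two components: the inclusion $T_t(X)=e^{tA}(X)\subset Q_t^{1/2}(X)=H_t$ is exactly the last assertion of Hypothesis \ref{Hyp_DPZ}, and the Fomin-derivative bound follows by combining \eqref{FominH} (with $p=1$) and \eqref{Lambda_t}: for every $h\in X$ and $t>0$,
\begin{equation*}
\|\beta^{\mu_t}_{T_t h}\|_{L^1(X,\mu_t)}=c_1\|T_t h\|_{H_{\mu_t}}=c_1\|Q_t^{-1/2}e^{tA}h\|_X\le\frac{c_1 Ce^{\omega t}}{t^\theta}\|h\|_X.
\end{equation*}

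With both hypotheses verified, statements (i) and (ii) will follow by a direct invocation of Theorem \ref{Th:Schauder_par}; the constants $C(T)$ and $C(T,\alpha)$ can be read off from the proof of that theorem in terms of $M$, $\omega$, $C$, $\theta$, and the universal constants $K_{n,\alpha}$. I do not anticipate any serious obstacle: the whole argument is a matter of matching the abstract framework to the Ornstein-Uhlenbeck setting, and the only mildly delicate point is the standard identification of $H_{\mu_t}$ with $Q_t^{1/2}(X)$ endowed with the norm $\|h\|_{H_{\mu_t}}=\|Q_t^{-1/2}h\|_X$.
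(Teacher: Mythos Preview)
Your proposal is correct and follows exactly the paper's approach: the paper verifies Hypotheses~\ref{Hyp1} and~\ref{Hyp2} with $H=X$ in the discussion preceding Theorem~\ref{Th:DPZgen} (using precisely \eqref{FominH} and \eqref{Lambda_t} as you do), and then obtains Theorem~\ref{Th:DPZgen_par} as an immediate application of Theorem~\ref{Th:Schauder_par}. There is nothing to add.
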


Let us go back to the case where $X$ is a Banach space. 
The classical Ornstein-Uhlenbeck semigroup, 
\begin{equation}\label{eq:ousemi}
P_tf(x) = \int_X f(e^{-t}x + \sqrt{1-e^{-2t}}y)\mu (dy), \quad t>0, \; f\in C_b(X),\; x\in X,
\end{equation}
where $\mu$ is any centered Gaussian measure in $X$, is not strong Feller. It is smoothing only along the directions of the Cameron-Martin space $H_{\mu}$. However, by the changement of variables $z =  \sqrt{1-e^{-2t}}y$ in the integral it may be rewritten in the form \eqref{P_t}, with $T_t= e^{-t}I$ and $\mu_t = \mu \circ ( \sqrt{1-e^{-2t}}I)^{-1}  $, which is the centered Gaussian measure in $X$ with covariance $Q_t = (1-e^{-2t})Q$, if $Q:X^*\mapsto X$ is the covariance of $\mu$.  For the case where $\mu$ is non-Gaussian see Subsection 5.4 below. 

The generator $L$ of $P_t$ is a realization of div$_{\mu}\nabla _{H_\mu}$, where div$_{\mu}$ is the Gaussian divergence and $\nabla _{H_\mu}$ is the gradient along $H_\mu$, see \cite[Sect. 5.8]{Boga}. 

As we mentioned at the beginning of the section, $\mu_t $ is Fomin differentiable along every $h\in H_{\mu_t}$, and Hypothesis  \ref{Hyp1}  is satisfied with $H_t = H_{\mu_t}$. Since $Q_t $ is a multiple of $Q$, the elements of $H_{\mu_t}$ coincide with those of $H_{\mu}$, but the norms of these spaces are different, and precisely we have
$$\|h\|_{H_{\mu_t}} = \frac{1}{  \sqrt{1-e^{-2t}} } \|h\|_{H_{\mu}}, \quad h\in  H_{\mu}, \; t>0. $$
The semigroup $T_t = e^{-t}I$ maps obviously $H_\mu$ into itself and into $H_{\mu_t}$ for every $t>0$; moreover by \eqref{FominH} we have 
$$\|\beta^{\gamma}_{T_th}\|_{L^1(X, \mu_t)} =   \sqrt{\frac{2}{\pi}}  \|e^{-t}h\|_{H_{\mu_t}} =  \sqrt{\frac{2}{\pi}} \frac{e^{-t}}{ \sqrt{1-e^{-2t}}} \|h\|_{H_{\mu}}
\leq   \frac{c\,e^{-t}}{t^{1/2}} \|h\|_{H_{\mu}}, \quad t>0, \; h\in H_{\mu}, $$
with $c= (2/\pi)^{1/2} \sup_{t>0} t^{1/2}/ \sqrt{1-e^{-2t}}$. Therefore, Hypothesis  \ref{Hyp2}  is satisfied with $H=H_{\mu}$, $\omega =-1$, $\theta =1/2$. 
Applying Theorems \ref{Th:Schauder_ell} and \ref{Zygmund_ell}  gives   the same results of \cite{CL}, namely 

\begin{Theorem}
\label{OUclassico} 
Let $\lambda>0$, $f\in C_b(X)$, and set $H=H_{\mu}$. Then the unique solution to
$$\lambda u - Lu =f$$
belongs to $Z^2_H(X)$, and    there is $C >0$ such that 
$$\|u\|_{Z^2_H(X)} \leq C \|f\|_{ \infty}. $$
If in addition $f\in C^{\alpha}_H(X)$ with $0<\alpha <1$, $u$ belongs to $C^{2+\alpha}_H(X)$, and    there is $C  >0$ such that 
$$\|u\|_{C^{2+\alpha}_H(X)} \leq C \|f\|_{   C^{\alpha}_H(X)}. $$
Let now $T>0$, $f\in Z^{2 }_H(X)$, $g\in C_b([0,T]\times X)$. Then the mild solution to 
$$\left\{ \begin{array}{l}
v_t(t,x)  = Lv(t,x)  + g(t,x), \quad t\in [0,T], \; x\in X, 
\\
\\
v(0, \cdot) = f, \end{array}\right. $$
belongs to $Z^{0,2}_H([0,T]\times X)$, and there exists $C=C(T)>0$ such that $\|v\|_{Z^{0,2}_H([0,T]\times X)} \leq C( \|f\|_{Z^{2 }_H(X)} + \|g\|_{\infty})$. 

If in addition 
 $f\in C^{2+\alpha}_H(X)$, $g\in C^{0,\alpha}_H([0,T]\times X)$ with $\alpha\in (0,1)$,  then $v\in C^{0,2+\alpha}_H([0,T]\times X)$ and there exists $C=C(T, \alpha )>0$ such that $\|v\|_{C^{0,2+\alpha }_H([0,T]\times X)} \leq C( \|f\|_{C^{2+\alpha }_H(X)} + \|g\|_{C^{0,\alpha}_H([0,T]\times X)})$. 
\end{Theorem}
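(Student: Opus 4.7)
The plan is to read off this result as a direct consequence of the abstract Theorems \ref{Th:Schauder_ell}, \ref{Zygmund_ell}, \ref{Th:Schauder_par}, \ref{Zygmund_par}, once one checks that Hypotheses \ref{Hyp1} and \ref{Hyp2} hold for the classical Ornstein--Uhlenbeck semigroup in \eqref{eq:ousemi}, with $H=H_\mu$, $\omega=-1$, $\theta=1/2$. Indeed the change of variable $z=\sqrt{1-e^{-2t}}\,y$ casts \eqref{eq:ousemi} in the generalized Mehler form \eqref{P_t} with $T_t=e^{-t}I$ and $\mu_t$ the centered Gaussian measure on $X$ with covariance $(1-e^{-2t})Q$, $Q$ being the covariance of $\mu$. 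Since $H_{\mu_t}=H_\mu$ as a set, with $\|h\|_{H_{\mu_t}}=(1-e^{-2t})^{-1/2}\|h\|_{H_\mu}$, and since $T_t(H_\mu)\subset H_\mu$ with $\|T_t\|_{\mathcal L(H_\mu)}=e^{-t}$, Hypothesis \ref{Hyp2}(i) is immediate; the identity \eqref{FominH} with $p=1$ yields
\[
\|\beta^{\mu_t}_{T_t h}\|_{L^1(X,\mu_t)}=\sqrt{2/\pi}\,\frac{e^{-t}}{\sqrt{1-e^{-2t}}}\|h\|_{H_\mu}\le \frac{c\,e^{-t}}{t^{1/2}}\|h\|_{H_\mu},
\]
so that Hypothesis \ref{Hyp2}(ii) holds with the stated constants.

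With this verified, the four statements of the theorem are obtained by specializing the abstract results to $1/\theta=2\in\mathbb N$. For the stationary problem with $f\in C_b(X)$ we are in the critical case $\alpha=0$, $\alpha+1/\theta=2\in\mathbb N$, so Theorem \ref{Zygmund_ell}(i) delivers $u\in Z^2_H(X)$ together with the claimed estimate. For $f\in C^\alpha_H(X)$ with $\alpha\in(0,1)$ we have $\alpha+1/\theta=\alpha+2\notin\mathbb N$, so Theorem \ref{Th:Schauder_ell}(ii) applies and gives $u\in C^{2+\alpha}_H(X)$ with the Schauder estimate. The two evolution statements follow analogously: Theorem \ref{Zygmund_par}(i) with $k=2$ handles $f\in Z^2_H(X)$ and $g\in C_b([0,T]\times X)$, while Theorem \ref{Th:Schauder_par}(ii), applied with exponent $\alpha$, gives the conclusion for $f\in C^{2+\alpha}_H(X)$ and $g\in C^{0,\alpha}_H([0,T]\times X)$.

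There is essentially no obstacle here; the entire content of the theorem has been reduced, by the constructions of Section \ref{sect:schauzyg}, to the two-line check of \eqref{fundamental}(i)--(ii). The only mild bookkeeping point is that the operator $L$ recovered from $P_t$ through \eqref{L} coincides with the classical Ornstein--Uhlenbeck generator $\mathrm{div}_\mu\nabla_{H_\mu}$ on sufficiently smooth test functions, as recalled from \cite[Sect.~5.8]{Boga}; this identification does not enter the proof of the regularity statements, which proceed entirely through the resolvent formula \eqref{L} and the Mehler representation \eqref{sgrOU}, in perfect analogy with the argument of \cite{CL} that our abstract theorems generalize.
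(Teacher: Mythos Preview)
Your proposal is correct and follows essentially the same approach as the paper: verify that the classical Ornstein--Uhlenbeck semigroup, written in Mehler form with $T_t=e^{-t}I$ and $\mu_t$ Gaussian with covariance $(1-e^{-2t})Q$, satisfies Hypotheses~\ref{Hyp1} and~\ref{Hyp2} with $H=H_\mu$, $\omega=-1$, $\theta=1/2$ via \eqref{FominH}, and then read off the four conclusions from Theorems~\ref{Th:Schauder_ell}, \ref{Zygmund_ell}, \ref{Th:Schauder_par}, \ref{Zygmund_par} with $1/\theta=2$. The paper presents exactly this verification in the paragraphs preceding the statement and then simply says ``Applying Theorems \ref{Th:Schauder_ell} and \ref{Zygmund_ell} gives\ldots''; your write-up is slightly more explicit about which case of which abstract theorem is being invoked, but there is no substantive difference.
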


Theorem \ref{OUclassico}   can be extended to the wider class of Ornstein-Uhlenbeck operators considered in \cite{VN,GVN}. 
Here,  $A:D(A)\subset X\mapsto X$ is the infinitesimal generator of a strongly continuous  semigroup $e^{tA}$,    and $Q\in \mathcal L (X^*, X)$ is a non-negative (namely, $x^*(Qx^*) \geq 0$ for every $x^* \in X^*$) and symmetric (namely, $y^*(Qx^* )= x^*(Qy^*)$ for every $x^*$, $y^* \in X^*$) operator.    Moreover,  the operators $Q_t$ defined through a Pettis integral, 
$$Q_t x^* := \int_0^{t} e^{sA}Qe^{sA^*}x^*ds, \quad x^*\in X^*, \; t>0, $$
are assumed to be  the covariances of centered Gaussian measures $\mu_t$ in $X$. 
We recall that if  $X$ is a Hilbert space, $Q_t$ is the covariance operator of a Gaussian measure if and only if its trace is finite. If $X$ is just a Banach space,  
 (necessary and) sufficient conditions for $Q_t$   to     be the covariance of a Gaussian measure are in  
\cite[Thm. 7.1]{VNW}. References for sufficient conditions are also in \cite[Remark 2]{VNW2}. 

   Here we choose as $H$ the reproducing kernel Hilbert space $H_Q$ associated to the operator $Q$, see \cite{GVN} and \cite[Chapter III]{VTC}.  
If $B\in \mathcal L(H, X)$ and $Q= BB^*$,      $P_t$ defined by \eqref{P_t} with $T_t= e^{tA}$ is the transition semigroup of a stochastic evolution equation, 
$$\left\{ \begin{array}{l}
dX(t) = AX(t) dt + BdW_H(t), \quad t>0, 
\\
\\
X(0) = x
\end{array}\right. $$
where $W_H(t)$ is a cylindrical     Wiener process with Cameron-Martin space $H $,  see \cite{BVN}    for precise definitions and more details. 
Moreover,  it was proved in  \cite[Thm. 6.2]{GVN} that the semigroup $P_t$ is strongly continuous in the mixed topology on $C_b(X)$, which is the finest locally convex topology on $C_b(X)$ which agrees on every bounded set with the topology of uniform convergence on compact sets.

Hypothesis  \ref{Hyp1} is satisfied with $T_t=e^{tA}$,   $H= H_Q$     (\cite[Thm. 3.4]{GVN}) if  there exists $\omega \in \R$ such that for every $x^*\in D(A^*)$ we have $(A^*-\omega I)x^*(Qx^*)\leq 0$, or equivalently if for every $x^*\in X^*$ the function $t\mapsto \|i^*e^{-\omega t}(e^{tA})^*x^*\|_H$ is nonincreasing in $[0, +\infty)$ (here $i$ is the embedding $i:H\mapsto X $). In this case, by \cite[Thm. 3.5]{GVN}, all the Cameron-Martin spaces $H_{\mu_t}$ coincide and have equivalent norms  for every $t>0$, and 
  $e^{tA}$ maps   $H$     into $H_{\mu_t}$, with 
$$\|e^{tA}h\|_{H_{\mu_t}}^2 \leq \frac{1}{t^2}\int_0^t \|e^{sA}h\|^2_{  H    }ds, \quad t>0. $$
Therefore, if $M$, $\omega$ are such that $\|e^{tA}\|_{\mathcal L (  H    )} \leq Me^{\omega t}$ for $t>0$, we get 
$$\|e^{tA}h\|_{H_{\mu_t}} \leq \frac{M}{t^{1/2}}e^{\max\{\omega , 0\}t}\|h\|_{  H   }, \quad t>0. $$
Recalling \eqref{FominH}, we obtain that Hypothesis  \ref{Hyp2} is satisfied with $\theta = 1/2$ and $\omega$ replaced by $\max\{\omega , 0\}$. 
The statements of Theorem \ref{OUclassico}   hold in this case too.  Notice that, still by \eqref{FominH} and Remark \ref{Rem:Frechet}, the space derivatives in the statements  are   in fact    Fr\'echet derivatives. 
 

\subsection{Nonlocal Ornstein-Uhlenbeck operators}
\label{sect:NLOUO}
As mentioned in the introduction, semigroups of type \eqref{P_t} arise as transition semigroups of Ornstein--Uhlenbeck processes with Levy noise (see \cite{FR}, \cite{LR1}, \cite{LR2}) in finite or infinite dimensional state spaces, i.e., a stochastic process $X(t)$, $t \geq 0$, solving a stochastic differential equation on $X$ of type
\begin{align*}
	d X(t) = A X(t) d t + d Y(t),
\end{align*}
where $Y(t)$, $t \geq 0$, is a Levy process.
We have seen examples of this type to which our results apply in finite dimensions in Subsection \ref{sect:OU}.
In this subsection we shall discuss such a ``nonlocal'' example in infinite dimensions.  More  precisely, in the situation of the previous subsection we take $X= L^2(0,1):=L^2((0,1), dt)$, where $dt$ denotes Lebesgue measure on $(0,1)$.
Let $A$ be the  Laplace operator $\Delta$ on $L^2(0,1)$ with Dirichlet boundary conditions.
Since we do not want to use too much theory of Levy processes  (see \cite{AR,LI,S}), we just mention here that  such a process is determined by a negative  definite   function $\lambda \colon L^2(0,1) \longrightarrow \mathbb{C}$, which in our case we take concretely to be
\begin{align}\label{eq:lambda-5.4.1}
	\lambda(x) := \| x\|^2_{L^2(0,1)} + c \| x \|^{2s}_{L^2(0,1)}, \quad x \in L^2(0,1),
\end{align}
where $c \geq 0$ and $s \in (0,1)$.
The first summand corresponds to the Wiener process part and the second to the pure jump part of $Y(t)$, $t \geq 0$, in its Levy-It\^{o}-decomposition  (see \cite{AR,LI,S}).  
 The corresponding transition semigroup  of $X(t)$, $t\geq0$, is then given by 
\begin{align*}
P_t f(x) = \int f(e^{t \Delta}x+y) \, \mu_t(dy), \quad t>0, \, f \in \mathcal B_b (L^2(0,1)), \;x \in L^2(0,1),
\end{align*}
where $\mu_t$, $t \geq 0$, are probability measures with $\mu_0 = \delta_0$ and with Fourier transforms given by
\begin{align}\label{eq:fourier-transform-5.4.2}
\begin{split}
\hat \mu_t(x) &:= \int_{L^2(0,1)} e^{i \langle x, y \rangle_{L^2(0,1)}} \mu_t(dy)
\\ &\phantom{:}= \operatorname{exp} \bigg\{ - \int_0^t \| e^{r \Delta} x\|^2_{L^2(0,1)} + c \| e^{r \Delta} x \|^{2s}_{L^2(0,1)} \, dr \bigg\},
\end{split}
\end{align}
for $t>0$ and $x \in L^2(0,1)$; see Section 8 in \cite{LR2}.

In fact,  it follows from the proof of Proposition 8.1 in \cite{LR2} that there exists probability measures  $\mu^c_t$ on $L^2(0,1)$ such that 
\begin{align*}
 	\hat{\mu}^c_t(x) = \operatorname{exp} \Big\{ - \int_0^t   c \| e^{r \Delta} x\|^{2s}_{L^2(0,1)} \, dr \Big\}, \quad t>0, 
\end{align*} 
while $ \operatorname{exp}  \{ - \int_0^t \| e^{r \Delta} x\|^2_{L^2(0,1)} \, dr  \}$ is the Fourier transform of the Gaussian measure $\mathcal N_{0,Q_t}$, 
 where 
\begin{align}\label{eq-Qt-5.4.4}
	Q_t = 2\int_0^t e^{2r \Delta} \, dr =  (- \Delta)^{-1} \big(I-e^{2t \Delta}), \quad t>0,
\end{align}
 has finite trace, because the eigenvalues $\lambda_k$, $k \in \mathbb N$, of $\Delta$ are proportional to $-k^2$. 
 Therefore, 
\begin{align}\label{eq:muhat}
	\mu_t =  \mathcal N (0,Q_t)\ast \mu^c_t , \quad t>0. 
\end{align} 
Furthermore, it  follows immediately from the proof of Proposition 
8.1 in \cite{LR2} 
that the functions in \eqref{eq:fourier-transform-5.4.2} are equicontinuous in $0$ with respect to the 
Sazonov
topology on $L^2(0,1)$ (namely,  the topology generated by the seminorms $x\mapsto Tx$, where $T$ is any Hilbert-Schmidt operator in $L^2(0,1)$). This implies that $t\mapsto \mu_t$ is weakly continuous (see 
e.g. Proposition 1.1  in \cite[Chap. IV.1.2]{VTC}). 

The generator $L$ of $P_t$  is  a realization in $C_b(L^2(0,1))$ of the operator $\mathcal{L}$ that reads as 
\begin{align}\label{eq:generator-5.4.3}
  \mathcal{L}  u(x) = \int_{L^2(0,1)} \Big( i \langle x, \Delta y \rangle^{}_{L^2(0,1)} - \lambda(y) \Big) \, e^{i \langle x, y \rangle_{L^2(0,1)}} \, \nu(dy),
\end{align} 
for all  smooth cylindrical functions $u$  such that $u = \hat \nu$ for some probability measure $\nu$ on $L^2(0,1)$.
We refer to \cite{LR1}   for details and a rigorous analysis.

Clearly, if $c=0$, $ \mu_t$ is the Gaussian measure $\mathcal N (0,Q_t)$ above, which is given by  \eqref{Q_t} with   $Q=2I$ and $A=\Delta$. 
In this case, 
$e^{t\Delta}$ maps $L^2(0,1)$ into $Q_t^{1/2}(L^2(0,1))$, and by elementary spectral theory we get 
\begin{align*}
	\|Q_t^{-1/2} e^{t \Delta}\|_{\mathcal L ( L^2(0,1) ) } \leq \frac{c}{t^{1/2}}, \quad t>0, 
\end{align*}
so \eqref{Lambda_t} holds. So, $P_t$ is just the semigroup \eqref{sgrOU}  with $Q=2I$ and $A=\Delta$, and  
$\mathcal{L}$ has the representation  \eqref{HypRealiz}.

For $c>0$ we can apply our approach to  our realization $L$ of   the operator $\mathcal{L}$  in \eqref{eq:generator-5.4.3}.
So, let us check our Hypotheses \ref{Hyp1} and \ref{Hyp2} with $H=X=L^2(0,1)$,  $H_t = Q_t^{1/2}(L^2(0,1))$, and $\theta= 1/2$. 
Obviously the only thing to check is Hypothesis \ref{Hyp2}(ii). 

Let us start with proving the Fomin differentiability of $\mu_t$ along $e^{t\Delta} h$, for every $h \in L^2(0,1)$ and $t >0$.
By the previous subsection we know that  $  \mathcal N(0, Q_t)$ is Fomin differentiable along $e^{t\Delta} h$ for every $t>0$ and $h\in L^2(0,1)$, with 
\begin{align*}
	\big\| \beta_{e^{t\Delta} h}^{ \mathcal N(0, Q_t)} \big\|_{L^1 (L^2(0,1), \, \mathcal N(0, Q_t))} \leq \frac{c}{t^{1/2}} \|h\|_{L^2(0,1)}, \quad t>0, \, h \in L^2(0,1).
\end{align*}
Now \eqref{eq:muhat}  and  the following lemma ensure that 
Hypothesis \ref{Hyp2}(ii) also holds for the measures $\mu_t$, still with $\theta= 1/2$.

\begin{Lemma}
Let $\mu$, $\nu$ be   probability measures on a separable Banach space $X$, such that $\mu$ is Fomin differentiable along $v\in X$. 
Then $\mu \ast  \nu$ is Fomin differentiable along $v$ and  
\begin{align*}
	\big\| \beta_{v}^{\mu \ast  \nu} \big\|_{L^1 (X, \, \mu \ast \nu)} \leq \big\| \beta_{v}^{\mu } \big\|_{L^1 (X, \, \mu) }
\end{align*}
\end{Lemma}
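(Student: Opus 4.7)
The plan is to work directly from the total variation characterization of Fomin differentiability in \eqref{misura_limite} and exploit the fact that convolution with a probability measure is a contraction in total variation norm.

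First I would observe the elementary identity $(\mu\ast\nu)_{w} = \mu_{w}\ast\nu$ for every $w\in X$, which follows straight from the definition of convolution and translation. Applied with $w=tv$, this yields
$$\frac{(\mu\ast\nu)_{tv}-\mu\ast\nu}{t}-(d_{v}\mu)\ast\nu = \left(\frac{\mu_{tv}-\mu}{t}-d_{v}\mu\right)\ast\nu.$$
Using the standard estimate $\|\rho\ast\nu\|\le\|\rho\|\,\|\nu\|$ for a finite signed measure $\rho$ and a probability measure $\nu$ (proved via Jordan decomposition of $\rho$ applied to the convolution), the total variation of the right-hand side is at most $\|(\mu_{tv}-\mu)/t-d_{v}\mu\|$, which tends to $0$ as $t\to 0$ by \eqref{misura_limite}. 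Hence $(\mu\ast\nu)_{tv}$ is differentiable at $t=0$ in total variation, and the signed measure derivative equals $(d_{v}\mu)\ast\nu$.

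Next I would show that $(d_{v}\mu)\ast\nu$ is absolutely continuous with respect to $\mu\ast\nu$. Writing the Jordan decomposition $d_{v}\mu = (d_{v}\mu)^{+}-(d_{v}\mu)^{-}$, each part is absolutely continuous with respect to $\mu$ (since $d_v\mu = \beta_v^{\mu}\cdot\mu$), and a standard Fubini-type argument shows that if $\sigma\ll\mu$ then $\sigma\ast\nu\ll\mu\ast\nu$: indeed, if $(\mu\ast\nu)(A)=0$ then $\mu(A-y)=0$ for $\nu$-a.e.\ $y$, whence $\sigma(A-y)=0$ for $\nu$-a.e.\ $y$, so $(\sigma\ast\nu)(A)=0$. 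Combined with the characterization \cite[Thm. 3.6.8]{BogaDiff} recalled just after \eqref{Fomin}, this identifies $(d_{v}\mu)\ast\nu$ as $\beta_{v}^{\mu\ast\nu}\cdot(\mu\ast\nu)$ and hence shows $\mu\ast\nu$ is Fomin differentiable along $v$.

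Finally, the claimed $L^{1}$ estimate follows from the identity between total variation norm and $L^{1}$ norm of the density:
$$\|\beta_{v}^{\mu\ast\nu}\|_{L^{1}(X,\mu\ast\nu)} = \|d_{v}(\mu\ast\nu)\| = \|(d_{v}\mu)\ast\nu\| \le \|d_{v}\mu\|\,\|\nu\| = \|\beta_{v}^{\mu}\|_{L^{1}(X,\mu)},$$
where the middle inequality again uses the contractivity of convolution against a probability measure in the total variation norm. No real obstacle is expected; the only subtle point is verifying that the contractivity estimate indeed holds for signed measures, but that is immediate from the Jordan decomposition and the positive case of Fubini.
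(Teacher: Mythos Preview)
Your argument is correct and takes a genuinely different route from the paper. The paper works through the integration-by-parts characterization \eqref{Fomin}: it writes $\int_X \partial f/\partial v\, d(\mu\ast\nu)$ as a double integral, integrates by parts in the $\mu$-variable, and then uses conditional expectation with respect to the addition map $\operatorname{Ad}(x,y)=x+y$ to identify $\beta_v^{\mu\ast\nu}(z)=\mathbb{E}_{\mu\otimes\nu}[\beta_v^{\mu}\circ\pi_1\mid\operatorname{Ad}=z]$; the $L^1$ bound then follows from Jensen's inequality for conditional expectations. Your approach instead stays at the level of measures, exploiting the translation identity $(\mu\ast\nu)_{tv}=\mu_{tv}\ast\nu$ together with the total-variation characterization \eqref{misura_limite} and the contractivity of convolution against a probability measure. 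Your route is more elementary---it avoids conditional expectations entirely---and makes the inequality transparent as a single instance of $\|\rho\ast\nu\|\le\|\rho\|$. The paper's route, on the other hand, yields an explicit pointwise formula for $\beta_v^{\mu\ast\nu}$ as a conditional expectation, which is extra information your argument does not produce (though it is not needed for the lemma as stated). One minor remark: your reference to \cite[Thm.~3.6.8]{BogaDiff} is not really needed---once the total-variation limit in your first step exists, Fomin differentiability is already established by \eqref{misura_limite}, and the absolute continuity of $d_v(\mu\ast\nu)$ with respect to $\mu\ast\nu$ follows from the general theory recalled in Section~\ref{sect:notation} (your direct Fubini argument for it is correct but redundant).
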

\begin{proof}
Let $f \in C^1_b(X)$. 
Then, defining $\operatorname{Ad} \colon X \times X \longrightarrow X$ by $\operatorname{Ad}(x,y) := x+y$  and $\pi_1: X\times X\rightarrow X$ by $\pi_1(x,y)=x$,   we have
$$\begin{array}{lll}
\ds \int_{X}  \frac{\partial f}{\partial v} \, d(\mu \ast \nu)
 &=& \ds  \int_{X} \int_{X} \frac{\partial f}{\partial v} (x+y) \, \mu (dx) \nu(dy)
\\
\\ 
& = & \ds \int_{X} \int_{X} f(x+y) \, \beta_{v}^{\mu }(x) \, \mu (dx) \nu(dy)
\\
\\
&= & \ds \int_{X} \int_{X} f(\operatorname{Ad}(x,y))  \, \mathbb E_{\mu  \otimes \nu} \Big[ \beta_{v}^{\mu }  \circ\pi_1  \, \Big| \, \sigma(\operatorname{Ad}) \Big] \, \mu (dx) \nu(dy)
\\ 
\\
&= & \ds \int_{X} f(z) \, \mathbb E_{\mu  \otimes \nu} \Big[ \beta_{v}^{\mu }  \circ\pi_1  \, \Big| \, \operatorname{Ad}=z \Big] \, (\mu \ast  \nu)(dz), 
\end{array}$$
  where $\mathbb E_{\mu  \otimes \nu} [  \varphi \,  | \, \sigma(g) ]$ denotes the conditional expectation of $\varphi \in L^1 (X\times X, \mu  \otimes \nu)$ with respect to the sigma-algebra generated by $g:X\times X\mapsto X$.    
Furthermore,
$$\begin{array}{lll}
\ds \int_{X} \Big| \mathbb E_{\mu  \otimes \nu} \Big[ \beta_{v}^{\mu } \circ\pi_1 \, \Big| \, \operatorname{Ad}=z \Big] \Big| \, (\mu \ast \nu)(dz)
& \leq & \ds  \int_{X} \mathbb E_{\mu  \otimes \nu} \Big[ \big| \beta_{v}^{\mu } \circ\pi_1  \big| \, \Big| \, \operatorname{Ad}=z \Big] \, (\mu \ast  \nu)(dz)
\\ 
\\
&= & \ds \int_{X} \int_{X} \mathbb E_{\mu  \otimes \nu} \Big[ \big| \beta_{v}^{\mu }\circ\pi_1 \big| \, \Big| \, \sigma(\operatorname{Ad}) \Big] \, d \mu  \otimes \nu
\\
\\ 
&= & \ds \int_{X} \big| \beta_{v}^{\mu }  \big| \, d \mu .
\end{array}$$
The statement follows, with $ \beta_{v}^{\mu \ast  \nu}(z)  = \mathbb E_{\mu  \otimes \nu} \Big[ \beta_{v}^{\mu } \circ\pi_1 \, \Big| \, \operatorname{Ad}=z \Big] $.
\end{proof}

Applying Theorems  \ref{Th:Schauder_ell} and \ref{Zygmund_ell} yields

\begin{Theorem}
\label{Th:LRell}
For every $f\in C_b(L^2(0,1))$ and $\lambda >0$,  the equation
\begin{equation}
\label{eq:LRell}
\lambda u - L u = f
\end{equation}
has a unique solution $u\in Z^2_b(L^2(0,1))$, and there is $C>0$, independent of $f$, such that 
$$\|u\|_{Z^2_b(L^2(0,1))} \leq C\|f\|_{\infty}. $$
If in addition $f\in C^{\alpha}_b(L^2(0,1))$ with $\alpha \in (0, 1)$, then $u\in C^{2+\alpha}_b(X)$  and there is $C>0$, independent of $f$, such that 
$$\|u\|_{C^{2+\alpha  }_b(L^2(0,1))} \leq C\|f\|_{ C^{\alpha}_b(L^2(0,1))}. $$
\end{Theorem}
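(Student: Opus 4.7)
The plan is to deduce Theorem \ref{Th:LRell} as a direct application of the general results Theorems \ref{Th:Schauder_ell} and \ref{Zygmund_ell}, by verifying Hypotheses \ref{Hyp1} and \ref{Hyp2} for the semigroup $P_t$ in this setting with $H=X=L^2(0,1)$ and $\theta=1/2$. Since the weak continuity $t\mapsto \mu_t$ has already been established via the Sazonov equicontinuity argument, and since $T_t=e^{t\Delta}$ is a contraction semigroup on $L^2(0,1)$, Hypothesis \ref{Hyp2}(i) holds trivially with $M=1$, $\omega=0$. The substantive verification is that of Hypothesis \ref{Hyp1} and Hypothesis \ref{Hyp2}(ii).

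First I would recall the structural decomposition $\mu_t=\mathcal N(0,Q_t)\ast \mu_t^c$ from \eqref{eq:muhat}, where $Q_t=(-\Delta)^{-1}(I-e^{2t\Delta})$. By the analysis of Subsection \ref{sect:OUinf_dim} (Hypothesis \ref{Hyp_DPZ} is satisfied for the Gaussian part with $Q=2I$ and $A=\Delta$), the Gaussian factor $\mathcal N(0,Q_t)$ is Fomin differentiable along every $v\in Q_t^{1/2}(L^2(0,1))$, and elementary spectral calculus applied to $Q_t$ gives the key pointwise bound
\begin{equation*}
\|Q_t^{-1/2}e^{t\Delta}\|_{\mathcal L(L^2(0,1))}\leq \frac{c}{t^{1/2}},\quad t>0,
\end{equation*}
so that, combined with \eqref{FominH} for $p=1$, one obtains
\begin{equation*}
\bigl\|\beta_{e^{t\Delta}h}^{\mathcal N(0,Q_t)}\bigr\|_{L^1(\mathcal N(0,Q_t))}\leq \frac{c'}{t^{1/2}}\|h\|_{L^2(0,1)},\quad t>0,\;h\in L^2(0,1).
\end{equation*}
This yields the required Fomin differentiability along $e^{t\Delta}h$ for the Gaussian factor; Hypothesis \ref{Hyp1} with $H_t=Q_t^{1/2}(L^2(0,1))$ then follows for the full measure $\mu_t$ by applying the convolution Lemma proved just above the theorem to $\mu=\mathcal N(0,Q_t)$ and $\nu=\mu_t^c$. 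The same lemma also yields, with no loss in the constant,
\begin{equation*}
\bigl\|\beta_{e^{t\Delta}h}^{\mu_t}\bigr\|_{L^1(\mu_t)}\leq \bigl\|\beta_{e^{t\Delta}h}^{\mathcal N(0,Q_t)}\bigr\|_{L^1(\mathcal N(0,Q_t))}\leq \frac{c'}{t^{1/2}}\|h\|_{L^2(0,1)},
\end{equation*}
which is precisely Hypothesis \ref{Hyp2}(ii) with $H=L^2(0,1)$, $\theta=1/2$ and $\omega=0$.

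Having verified both hypotheses with $\theta=1/2$, the conclusion is a direct reading of the general theorems. Since $1/\theta=2\in\mathbb N$, Theorem \ref{Zygmund_ell}(i) applied with $\lambda>0$ and $f\in C_b(L^2(0,1))$ yields the Zygmund estimate $\|u\|_{Z^2_b(L^2(0,1))}\leq C\|f\|_{\infty}$, while for $f\in C^\alpha_b(L^2(0,1))$ with $\alpha\in(0,1)$ we have $\alpha+1/\theta=\alpha+2\notin\mathbb N$, so Theorem \ref{Th:Schauder_ell}(ii) gives $u\in C^{2+\alpha}_b(L^2(0,1))$ with the quantitative bound. Note that because $H$ coincides with $X$, the derivatives involved are the usual ones, and $C^k_H=C^k_b$, $Z^k_H=Z^k_b$.

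The only step requiring genuine care is the verification of Hypothesis \ref{Hyp2}(ii) for the convolved measure $\mu_t$: the Gaussian estimate is classical, but transferring the logarithmic derivative bound through the convolution with $\mu_t^c$ (which is not assumed to be Fomin differentiable) is the crucial point, and it is handled by the conditional-expectation argument in the preceding lemma, which controls $\|\beta^{\mu\ast\nu}_v\|_{L^1(\mu\ast\nu)}$ by $\|\beta^\mu_v\|_{L^1(\mu)}$ without any hypothesis on $\nu$ beyond its being a probability measure. Once this is in place, everything else is a purely bookkeeping matter of matching constants to the general statements.
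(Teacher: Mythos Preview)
Your proposal is correct and follows essentially the same route as the paper: verify Hypotheses \ref{Hyp1} and \ref{Hyp2} with $H=X=L^2(0,1)$ and $\theta=1/2$ by combining the Gaussian estimate $\|Q_t^{-1/2}e^{t\Delta}\|\leq c\,t^{-1/2}$ with the convolution lemma applied to $\mu_t=\mathcal N(0,Q_t)\ast\mu_t^c$, then read off the conclusions from Theorems \ref{Zygmund_ell}(i) and \ref{Th:Schauder_ell}(ii). The paper likewise presents Theorem \ref{Th:LRell} as a direct consequence of those general theorems once the hypotheses have been checked in the preceding discussion.
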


Applying  Theorems \ref{Th:Schauder_par} and \ref{Zygmund_par} yields

\begin{Theorem}
\label{Th:LRpar}
Let $T>0$. For every $f\in C_b(L^2(0,1))$, $g\in C_b([0,T]\times L^2(0,1))$,  let $v$ be the mild solution to 
$$
\left\{ \begin{array}{l}
v_t(t,x)  = Lv(t,x)  + g(t,x), \quad t\in [0,T], \; x\in L^2(0,1), 
\\
\\
v(0, \cdot) = f.  \end{array}\right. 
$$
 \begin{itemize}
\item[(i)] If $f\in Z^2_b(L^2(0,1))$ and $g\in C_b([0,T]\times L^2(0,1))$, 
then $v\in Z^{0, 2} _b([0,T]\times L^2(0,1))$, and there exists $C= C(T)>0$, independent of $f$ and $g$, such that 
$$ \|v\|_{Z^{0, 2}_b ([0,T]\times L^2(0,1))} \leq C ( \|f\|_{Z^2_b(L^2(0,1))} + \|g\|_{\infty}). $$
\item[(ii)] If $\alpha\in (0,1)$ and    $f\in C^{2+\alpha  }_b(L^2(0,1))$,  $g\in C^{0, \alpha}_b([0,T]\times L^2(0,1))$, 
   then $v\in C^{0,2 +\alpha } _b([0,T]\times L^2(0,1))$. There exists $C= C(T, \alpha)>0$, independent of $f$ and $g$, such that 
$$ \|v\|_{ C^{0,2+\alpha }_b([0,T]\times L^2(0,1))} \leq C  ( \|f\|_{ C^{ 2+2\alpha }_b(L^2(0,1))} + \|g\|_{C^{0,\alpha  }_b ([0,T]\times L^2(0,1))} ). $$
\end{itemize}
\end{Theorem}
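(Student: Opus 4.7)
The plan is to apply directly Theorems \ref{Th:Schauder_par} and \ref{Zygmund_par} to the generator $L$ of the semigroup $P_t$ associated to the nonlocal Ornstein--Uhlenbeck operator of this subsection, once Hypotheses \ref{Hyp1} and \ref{Hyp2} have been verified with $H = X = L^2(0,1)$, $T_t = e^{t\Delta}$, $\omega = 0$, and $\theta = 1/2$. The preparatory material in the excerpt already supplies everything needed: the weak continuity of $t \mapsto \mu_t$, the factorisation $\mu_t = \mathcal N(0,Q_t) \ast \mu_t^c$ coming from \eqref{eq:muhat}, the Fomin differentiability of $\mathcal N(0,Q_t)$ along directions of the form $e^{t\Delta} h$ with the correct $t^{-1/2}$ estimate, and the stability lemma for Fomin derivatives under convolution with an arbitrary probability measure.

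Concretely, first I would verify Hypothesis \ref{Hyp1}: for each $t > 0$ the subspace $H_t := Q_t^{1/2}(L^2(0,1))$ is non-trivial (since $Q_t$ in \eqref{eq-Qt-5.4.4} is strictly positive), and by the convolution lemma the measure $\mu_t$ inherits Fomin differentiability of $\mathcal N(0,Q_t)$ along every $h \in H_t$. Next I would verify Hypothesis \ref{Hyp2}: part (i) is immediate because $T_t = e^{t\Delta}$ is a contraction semigroup on $L^2(0,1)$, so one may take $M = 1$ and $\omega = 0$. For part (ii), the smoothing of the heat semigroup gives $T_t(L^2(0,1)) \subset Q_t^{1/2}(L^2(0,1)) = H_t$, and from the Gaussian computation recalled in Subsection \ref{sect:OUinf_dim} together with \eqref{eq-Qt-5.4.4} one has
$$\bigl\| \beta^{\mathcal N(0,Q_t)}_{e^{t\Delta} h} \bigr\|_{L^1(L^2(0,1),\,\mathcal N(0,Q_t))} \leq \frac{C}{t^{1/2}}\, \|h\|_{L^2(0,1)}, \quad t>0,\ h\in L^2(0,1).$$
The convolution lemma proved just before the theorem then transfers this estimate verbatim to $\mu_t$, yielding Hypothesis \ref{Hyp2}(ii) with $\theta = 1/2$ and $\omega = 0$.

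With the hypotheses in force, the statements of the theorem are read off from the abstract results. Since $1/\theta = 2 \in \mathbb N$, statement (i) is exactly the critical case $k = 2$ of Theorem \ref{Zygmund_par}(i) applied with $H = L^2(0,1)$. For statement (ii), for any $\alpha \in (0,1)$ one has $\alpha + 1/\theta = 2 + \alpha \notin \mathbb N$, placing us in the generic (non-critical) H\"older regime of Theorem \ref{Th:Schauder_par}(ii), which furnishes exactly the estimate in Zygmund/H\"older norms claimed for $v$.

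The main conceptual difficulty — handling the purely non-Gaussian factor $\mu_t^c$, for which no direct Fomin derivative formula is available — is sidestepped by the factorisation \eqref{eq:muhat} combined with the elementary convolution lemma: Fomin differentiability and its $L^1$-bound propagate from $\mathcal N(0,Q_t)$ to $\mu_t$ irrespective of $\mu_t^c$. Beyond this, the argument is purely bookkeeping, choosing the correct branch (Schauder vs.\ Zygmund) of the abstract theorems.
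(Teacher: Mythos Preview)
Your proposal is correct and follows exactly the paper's approach: the hypotheses are verified in the discussion preceding Theorem~\ref{Th:LRell} via the factorisation $\mu_t=\mathcal N(0,Q_t)\ast\mu_t^c$ and the convolution lemma, and the paper then simply says ``Applying Theorems~\ref{Th:Schauder_par} and~\ref{Zygmund_par} yields'' Theorem~\ref{Th:LRpar}. Your identification of which abstract branch to invoke (Theorem~\ref{Zygmund_par}(i) for $1/\theta=2\in\mathbb N$, Theorem~\ref{Th:Schauder_par}(ii) for $\alpha+1/\theta=2+\alpha\notin\mathbb N$) is exactly right.
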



\subsection{The Gross Laplacian and its powers} 
\label{sect:Gross}

Here $X$ is a separable Hilbert space and $Q\in \mathcal L(X)$ is a self-adjoint positive operator with finite trace. The semigroup $P_t$ is defined by 
\eqref{P_t} with $T_t=I$ for every $t>0$, and $\mu_t = \mathcal N(0, tQ)$ is the centered Gaussian measure in $X$ with covariance $tQ$. Therefore we have
\begin{equation}
\label{Gross_sgr}
P_tf(x) = \int_X f(x+y)\mu_t(dy) = \int_X f(x+\sqrt{t}z)\mu (dz), \quad f\in C_b(X), \; t>0.
\end{equation}
with $\mu =  \mu_1 = \mathcal N (0, Q)$. That $P_t$ is a semigroup (namely, $\mu_t \star \mu_s = \mu_{t+s}$ for every $s$, $t>0$) is a consequence of standard properties of Gaussian measures, e.g. \cite[Prop. 2.2.10]{Boga}. The operator $L$ defined in \eqref{L} is a realization of the differential operator 
$$\mathcal{L} u(x) = \frac{1}{2}\text{Tr}\,(QD^2u (x)). $$
See  \cite{G}, \cite[Ch. 3]{DPZ} and the references therein.  
We choose as $H_t $ the Cameron-Martin space of $\mu_t$. So, Hypothesis  \ref{Hyp1} is satisfied.
Moreover we take $H = H_1=$ the  Cameron-Martin space of $\mu$. We have $H_t = Q^{1/2}(X) =H$ for every $t>0$, with norm depending on $t$, 
$$\|h\|_{H_t} = \frac{1}{t^{1/2}}\|h\|_H, \quad h\in H, \; t>0.$$
 Consequently, by \eqref{FominH}, 
\begin{equation}
\label{beta_Gross}
\|\beta^{\mu_t}_{T_th}\|_{L^p(X, \mu_t)} \leq  \frac{c_p}{t^{1/2}}\|h\|_H, \quad h\in H, \; t>0,
\end{equation}
and taking $p=1$, Hypothesis  \ref{Hyp2}  is satisfied with $\theta =1/2$, $\omega =0$. Therefore Theorems  \ref{Th:Schauder_ell} and \ref{Zygmund_ell} yield that the statement of Theorem \ref{OUclassico} holds in this case too, and in this case too  the space derivatives in the statement are Fr\'echet derivatives,  by \eqref{beta_Gross} and Remark \ref{Rem:Frechet}. 

The Schauder part of Theorem \ref{OUclassico} in the stationary case was already stated in \cite{CDP1,DPZ}; see also \cite{ABP} for a related result.

Now let us consider the powers $(-L)^{s}$ with $s\in (0,1)$. As in the finite dimensional case  (see \eqref{eq:Laplace}) we define it as minus the generator of the subordinated semigroup $S_t$  of $P_t$  on $C_b(X)$ with subordinator $\{\eta^{(s)}_t(r)dr,\ t>0\}$, where as in Subsection \ref{subs:Laplacian}, $\eta^{(s)}_t(r)$, $r>0$, is given as the inverse Laplace transform of $[0,\infty) \ni \lambda \mapsto e^{-t\lambda^s}$, i.e.  
$$\ S_tf(x) = \int_0^{\infty}( P_{\sigma}f)(x) \eta^{(s)}_t (\sigma)d\sigma 
= \int_0^{\infty} \int_X f(x+y)  \mathcal N (0, \sigma Q)(dy) \,\eta^{(s)}_t(\sigma)d\sigma
= \int_X f(x+y) \nu_t(dy), \; t>0, $$
where $P_t$ is the semigroup in \eqref{Gross_sgr}, and  the measures $\nu_t$ are defined by 
\begin{equation}
\label{nu_t}
\nu_0(  B )= \delta_0( B ); \;\; \nu_t( B )  = \int_0^{\infty} \mathcal N (0, \sigma Q)(  B )\eta^{(s)}_t(\sigma)d\sigma =  \int_0^{\infty} \mathcal N (0, Q)(  B /\sigma ^{1/2})\eta^{(s)}_t(\sigma)d\sigma , \;\; t>0 ,\ B\in\mathcal{B}(X), 
 \end{equation}
 where $\mathcal{B}(X)$ denotes the Borel $\sigma$-algebra of $X$.  
According to the terminology of \cite[Ch. 4]{BogaDiff}, $ \nu_t$ is called ``mixture of measures".

\begin{Lemma}
\label{Le:Gross_fraz}
$t\mapsto \nu_t$ is weakly continuous in $[0, +\infty)$. The generator of $S_t$ is the operator whose resolvent is given by 
\begin{equation}
\label{Kato}
\frac{\sin(s\pi) }{\pi} 
\int_0^{\infty} R(\xi, L)\frac{\xi^{s}}{ \lambda^2 - 2\xi^{s}\cos(s\pi) + \xi^{2s}}\,d\xi, \quad \lambda >0.
\end{equation}
\end{Lemma}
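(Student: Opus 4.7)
The statement naturally splits into two independent claims, which I would address in turn.

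For the weak continuity of $t \mapsto \nu_t$, I would take $f \in C_b(X)$ and set $F(\sigma) := \int_X f(y)\,\mathcal N(0, \sigma Q)(dy)$ for $\sigma > 0$. Since $\mathcal N(0, \sigma Q) \to \delta_0$ weakly as $\sigma \to 0^+$ (and depends continuously on $\sigma > 0$), $F$ extends continuously to $[0, +\infty)$ with $F(0) = f(0)$ and $\|F\|_\infty \leq \|f\|_\infty$. By Fubini,
\begin{equation*}
\int_X f\, d\nu_t = \int_0^\infty F(\sigma)\, \eta^{(s)}_t(\sigma)\, d\sigma, \quad t > 0.
\end{equation*}
Using the scaling identity $\eta^{(s)}_t(\sigma) = t^{-1/s}\eta^{(s)}(t^{-1/s}\sigma)$ recalled in Section \ref{subs:Laplacian} and changing variables via $r = t^{-1/s}\sigma$, this rewrites as $\int_X f\, d\nu_t = \int_0^\infty F(t^{1/s}r)\,\eta^{(s)}(r)\, dr$. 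Since $\eta^{(s)}$ is a probability density on $(0, +\infty)$, dominated convergence yields the continuity of $t \mapsto \int_X f\, d\nu_t$ on $[0, +\infty)$, with the correct value $f(0)$ at $t = 0$.

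For the resolvent identity, let $G$ denote the operator on $C_b(X)$ whose resolvent is defined, exactly as in \eqref{L}, by $R(\lambda, G)f := \int_0^\infty e^{-\lambda t} S_t f\, dt$, $\lambda > 0$. Substituting the subordination formula for $S_t$ and applying Fubini (justified by $\|P_\sigma f\|_\infty \leq \|f\|_\infty$ and the fact that $\eta^{(s)}_t$ is a probability density) yields
\begin{equation*}
R(\lambda, G)f = \int_0^\infty P_\sigma f\, \phi_\lambda(\sigma)\, d\sigma, \qquad \phi_\lambda(\sigma) := \int_0^\infty e^{-\lambda t}\,\eta^{(s)}_t(\sigma)\, dt.
\end{equation*}
A further Fubini together with the defining property $\int_0^\infty e^{-\sigma\xi}\eta^{(s)}_t(\sigma)\, d\sigma = e^{-t\xi^s}$ of the subordinator identifies the Laplace transform in $\sigma$ of $\phi_\lambda$ as $(\lambda + \xi^s)^{-1}$.

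To recognize the kernel in \eqref{Kato}, I would invoke the classical scalar identity
\begin{equation*}
\frac{1}{\lambda + \xi^s} = \frac{\sin(s\pi)}{\pi}\int_0^\infty \frac{1}{\xi + r}\cdot \frac{r^s}{\lambda^2 - 2r^s\cos(s\pi) + r^{2s}}\, dr,
\end{equation*}
derivable by contour integration of $z^{s-1}/(\lambda + z^s)$ along a keyhole contour around the positive real axis (the case $s = 1/2$, where the denominator reduces to $\lambda^2 + r$ and the identity can be checked by elementary partial fractions, serves as a useful sanity check). Inverting the Laplace transform in $\xi$ gives
\begin{equation*}
\phi_\lambda(\sigma) = \frac{\sin(s\pi)}{\pi}\int_0^\infty e^{-r\sigma}\,\frac{r^s}{\lambda^2 - 2r^s\cos(s\pi) + r^{2s}}\, dr,
\end{equation*}
and a final Fubini, combined with $R(r, L)f = \int_0^\infty e^{-r\sigma}P_\sigma f\, d\sigma$, converts $R(\lambda, G)f$ into the right-hand side of \eqref{Kato}. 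The main obstacle is the justification of this last Fubini step: one must check that the double integral of $\|f\|_\infty\, e^{-r\sigma}\,r^s/(\lambda^2 - 2r^s\cos(s\pi) + r^{2s})$ is finite, which reduces after integrating in $\sigma$ to the convergence of $\int_0^\infty r^{s-1}/(\lambda^2 - 2r^s\cos(s\pi) + r^{2s})\, dr$, guaranteed by the asymptotics $r^{s-1}$ at the origin and $r^{-s-1}$ at infinity. Uniqueness of the closed operator determined by its resolvent then identifies $G$ with $-(-L)^s$.
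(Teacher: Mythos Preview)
Your weak continuity argument is essentially the paper's: both rewrite $\int_X f\,d\nu_t$ via the scaling $\eta^{(s)}_t(\sigma)=t^{-1/s}\eta^{(s)}(t^{-1/s}\sigma)$ and apply dominated convergence; the paper carries the inner Gaussian integral along rather than packaging it as your $F(\sigma)$, but the argument is the same.

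For the resolvent formula the routes genuinely diverge. The paper inserts the explicit representation \eqref{Y} for $\eta^{(s)}$ into the triple integral $\int_0^\infty e^{-\lambda t}\int_0^\infty P_\sigma f\,\eta^{(s)}_t(\sigma)\,d\sigma\,dt$, changes variables $r\mapsto t^{1/s}\xi$, applies Fubini, and evaluates the resulting $t$-integral via $\int_0^\infty e^{-at}\sin(bt)\,dt=b/(a^2+b^2)$; this is a bare-hands computation using nothing beyond \eqref{Y}. Your argument is more structural: you use only the defining Laplace identity $\int_0^\infty e^{-\sigma\xi}\eta^{(s)}_t(\sigma)\,d\sigma=e^{-t\xi^s}$ to get $\int_0^\infty e^{-\xi\sigma}\phi_\lambda(\sigma)\,d\sigma=(\lambda+\xi^s)^{-1}$, then invoke the Stieltjes representation of $(\lambda+\xi^s)^{-1}$ and invert. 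This buys generality (it would work for any subordinator with known Laplace exponent) at the price of importing a complex-analytic identity the paper avoids. One small correction to your sketch: integrating $z^{s-1}/(\lambda+z^s)$ around a keyhole on the \emph{positive} axis does not produce the kernel $1/(\xi+r)$; the clean derivation is to apply Cauchy's formula to $z\mapsto (\lambda+z^s)^{-1}$ at the point $\xi>0$ with the branch cut of $z^s$ along the \emph{negative} real axis, then deform the contour to hug that cut and read off the jump $\tfrac{1}{\pi}\operatorname{Im}(\lambda+r^se^{is\pi})^{-1}$, which gives exactly the density in \eqref{Kato}. Your Fubini justifications for the final exchange are correct.
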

\begin{proof}
Let us check  that $t\mapsto \nu_t$ is weakly continuous. For every $f\in C_b(X)$ and $t>0$ we have
$$\int_X f(x)\nu_t(dx) = \int_X \int_0^{+\infty} f(x) \mathcal N (0, \sigma Q)(dx)\eta^{(s)}_t(\sigma)d\sigma =  \int_X \int_0^{+\infty} f(t^{1/2s}\tau^{1/2} z) N_{0,  Q}(dz)\eta^{(s)}(\tau) d\tau  $$
For $t_0>0$ the right-hand side goes to  $\int_X f(x)\nu_{t_0}(dx)$ as $t\to t_0$, by the Dominated Convergence Theorem. The same holds for $t_0=0$, recalling that $\int_0^{+\infty}  \eta^{(s)}(\tau) d\tau  = 1$. 
 
Concerning the second assertion, using \eqref{Y} for every $\lambda >0$ and $f\in C_b(X)$ we get
$$\begin{array}{l}
\ds \int_0^{\infty} e^{-\lambda t} S_tf(x)dt = \int_0^{\infty} e^{-\lambda t} \int_0^{\infty} P_\sigma f(x) \frac{t^{-1/s}}{\pi} 
 \int_0^{\infty} e^{-\sigma t^{-1/s} r- r^s\cos(s\pi )}\sin(r^s\sin(s\pi))dr\, d\sigma \,dt 
\\
\\
= \ds  \frac{1}{\pi}  \int_0^{\infty} d\xi \left(\int_0^{\infty}  
e^{-\lambda t- t\xi ^s\cos(s\pi )}\sin(t\xi ^s\sin(s\pi)) dt \int_0^{\infty} 
P_\sigma f(x)  e^{-\sigma \xi }d\sigma  \right)
 \\
 \\
 =  \ds   \frac{\sin(s\pi)}{\pi} \int_0^{\infty} R(\xi, L)f(x) \frac{\xi^{s}}{ \lambda^2 - 2\xi^{s}\cos(s\pi) + \xi^{2s}}\,d\xi  
\end{array} $$
(the last equality follows from $\int_0^{\infty} e^{-at}\sin(bt)dt = b/(b^2+a^2)$). 
\end{proof}

We recall that if $L$ is the infinitesimal generator of a bounded strongly continuous semigroup in a Banach space, 
formula \eqref{Kato} coincides with the Kato representation formula for the resolvent of $-(-L)^{s}$ for $s\in (0, 1)$, which may be taken as a definition of $-(-L)^{s}$ (\cite{Kato}). In our case $P_t$ is a contraction semigroup in $C_b(X)$ but it  is not strongly continuous, whereas it is strongly continuous in $BUC(X)$. 
Therefore, the operator whose resolvent is given by  \eqref{Kato} is an extension to $C_b(X)$ of $-(-L_0)^{s}$, where $L_0$ is the part of $L$ in $BUC(X)$, and it may be called $-(-L)^s$, although our case is not covered by the standard theory of powers of (noninvertible) operators.

The following easy lemma will be used here and in the following. 

\begin{Lemma}
\label{Le:rescaled}
Let $\nu$ be a probability measure in a Banach space $X$ that is Fomin differentiable along some $h$, and let $c>0$. Then the measure $\nu_c := \nu \circ(cI)^{-1}$ (namely, $\nu_c (A) = \nu(A/c)$)
is Fomin differentiable along $h$, and 
\begin{equation}
\label{beta_h_rescaled}
\left\{ \begin{array}{ll}(i) & \beta_h^{\nu_c}(y) = \ds  \frac{1}{c}\beta_h^{\nu}\left( \frac{y}{c} \right), \quad \nu_c-a.e. \;y\in X; 
\\
\\
(ii) & \|\beta_h^{\nu_c}\|_{L^1(X, \nu_c)} = \ds \frac{1}{c}\|\beta_h^{\nu}\|_{L^1(X, \nu)} . 
\end{array}\right.
\end{equation}
\end{Lemma}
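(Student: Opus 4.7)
The plan is to verify the integration-by-parts characterization of Fomin differentiability (equation \eqref{Fomin}, which by \cite[Thm. 3.6.8]{BogaDiff} is in fact equivalent to Fomin differentiability) directly, with the candidate logarithmic derivative $y\mapsto (1/c)\beta_h^{\nu}(y/c)$, and then read off (ii) as an immediate consequence of the change-of-variables formula.

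First I would fix an arbitrary $f\in C^1_b(X)$ and compute $\int_X (\partial f/\partial h)(y)\,\nu_c(dy)$ using the push-forward definition $\nu_c=\nu\circ(cI)^{-1}$, which gives
\[
\int_X \frac{\partial f}{\partial h}(y)\,\nu_c(dy)=\int_X \frac{\partial f}{\partial h}(cx)\,\nu(dx).
\]
Next I would introduce the auxiliary function $g(x):=f(cx)$, which is again in $C^1_b(X)$ and satisfies $\partial g/\partial h(x)=c\,(\partial f/\partial h)(cx)$; hence the previous integral equals $c^{-1}\int_X (\partial g/\partial h)(x)\,\nu(dx)$. Applying \eqref{Fomin} to $g$ with respect to $\nu$ (which is Fomin differentiable along $h$ by assumption) gives
\[
\int_X \frac{\partial g}{\partial h}(x)\,\nu(dx)=-\int_X f(cx)\,\beta_h^{\nu}(x)\,\nu(dx),
\]
and a second application of the change-of-variables formula reverts the integral to $\nu_c$, yielding
\[
\int_X \frac{\partial f}{\partial h}(y)\,\nu_c(dy)=-\int_X f(y)\,\frac{1}{c}\beta_h^{\nu}\!\left(\frac{y}{c}\right)\nu_c(dy).
\]

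Finally, the function $y\mapsto (1/c)\beta_h^{\nu}(y/c)$ belongs to $L^1(X,\nu_c)$: indeed, by the same change of variables,
\[
\int_X \left|\frac{1}{c}\beta_h^{\nu}\!\left(\frac{y}{c}\right)\right|\nu_c(dy)=\frac{1}{c}\int_X |\beta_h^{\nu}(x)|\,\nu(dx)=\frac{1}{c}\|\beta_h^{\nu}\|_{L^1(X,\nu)}<+\infty.
\]
Invoking \cite[Thm. 3.6.8]{BogaDiff}, the integration-by-parts identity just derived combined with this $L^1$ bound shows that $\nu_c$ is Fomin differentiable along $h$ with logarithmic derivative $(1/c)\beta_h^{\nu}(\cdot/c)$, proving (i); the displayed equality is precisely (ii). There is no real obstacle here: the entire argument is a bookkeeping exercise with the push-forward, and the only thing one must not forget is the chain-rule factor $c$ coming from differentiating $g=f\circ (cI)$ along $h$.
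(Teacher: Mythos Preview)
Your proof is correct and follows essentially the same route as the paper: push forward to $\nu$, use the chain rule $\partial_h(f\circ cI)=c\,(\partial_h f)\circ cI$, apply \eqref{Fomin} for $\nu$, and push back to $\nu_c$. If anything, you are slightly more explicit than the paper in checking the $L^1$ membership of the candidate density before invoking the characterization from \cite[Thm.~3.6.8]{BogaDiff}.
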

\begin{proof}
For every $f\in C^1_b(X)$ and $t>0$ we have
$$\begin{array}{l}
\ds \int_X \frac{\partial f}{\partial h}(y)\nu_c(dy) = \int_X \frac{\partial f}{\partial h}(cz) \nu(dz) = \int_X \frac{1}{c}  \frac{\partial  }{\partial h}f(c\;\cdot)(z)\, \nu(dz) 
\\
\\
\ds =  \frac{1}{c}  \int_X f(cz)\beta^{\nu}_h(z)  \nu(dz) =   \frac{1}{c}  \int_X f(y)\beta^{\nu}_h\left( \frac{y}{c} \right) \nu_c(dy), 
\end{array}$$
and \eqref{beta_h_rescaled}(i)  follows. Moreover, 
$$ \|\beta_h^{\nu_c}\|_{L^1(X, \nu_c)}  =  \frac{1}{c} \int_X \left|\beta^{\nu}_h\left( \frac{y}{c} \right)\right|\nu_c(dy) = \frac{1}{c} \int_X |\beta^{\nu}_h ( y )|\nu (dy)$$
which is \eqref{beta_h_rescaled}(ii). 
\end{proof}

\begin{Proposition}
\label{Pr:Gross}
Let $H =Q^{1/2}(X)$, $T_t=I$ for every $t>0$. The   measures $\nu_t$ defined in \eqref{nu_t} satisfy Hypothesis  \ref{Hyp2}, with $\omega =0$ and $\theta = 1/(2s)$. 
\end{Proposition}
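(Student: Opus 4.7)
Since $T_t=I$, Hypothesis \ref{Hyp2}(i) is trivial with $M=1$, $\omega=0$, so everything reduces to verifying the bound in Hypothesis \ref{Hyp2}(ii) for $\nu_t$ along arbitrary $h\in H$. The natural strategy is to transfer the Fomin differentiability of the single Gaussian factors $\mathcal N(0,\sigma Q)$ through the mixture. For each $\sigma>0$, the measure $\mathcal N(0,\sigma Q)$ has Cameron--Martin space $H$ with norm $\|\cdot\|_H/\sqrt\sigma$, so by \eqref{FominH} (or equivalently Lemma \ref{Le:rescaled}(ii) applied to the dilation $\sqrt{\sigma}I$),
\[
\|\beta^{\mathcal N(0,\sigma Q)}_h\|_{L^1(X,\,\mathcal N(0,\sigma Q))}\;\le\;\sqrt{\tfrac{2}{\pi\sigma}}\,\|h\|_H.
\]

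Next I would produce a candidate Fomin derivative $\beta^{\nu_t}_h$ by a ``conditional expectation'' construction. Consider on $(0,\infty)\times X$ the probability measure
\[
m_t(d\sigma,dx):=\eta^{(s)}_t(\sigma)\,d\sigma\otimes \mathcal N(0,\sigma Q)(dx),
\]
whose $X$-marginal is precisely $\nu_t$, and set $G(\sigma,x):=\beta^{\mathcal N(0,\sigma Q)}_h(x)$. Define $\beta^{\nu_t}_h(x):=\mathbb E_{m_t}\!\left[G\,\big|\,x\right]$. For any $f\in C^1_b(X)$, Fubini plus Fomin's formula \eqref{Fomin} applied to each $\mathcal N(0,\sigma Q)$ give
\[
\int_X \frac{\partial f}{\partial h}\,d\nu_t=-\int_0^\infty\!\!\int_X f(x)\,G(\sigma,x)\,\mathcal N(0,\sigma Q)(dx)\,\eta^{(s)}_t(\sigma)\,d\sigma=-\int_X f\,\beta^{\nu_t}_h\,d\nu_t,
\]
so once we establish $\beta^{\nu_t}_h\in L^1(X,\nu_t)$, Fomin differentiability of $\nu_t$ along $h$ follows from \cite[Thm.\ 3.6.8]{BogaDiff}.

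Applying Jensen's inequality to the conditional expectation and disintegrating back gives
\[
\|\beta^{\nu_t}_h\|_{L^1(X,\nu_t)}\;\le\;\int_0^\infty\|\beta^{\mathcal N(0,\sigma Q)}_h\|_{L^1(X,\mathcal N(0,\sigma Q))}\,\eta^{(s)}_t(\sigma)\,d\sigma\;\le\;\sqrt{\tfrac{2}{\pi}}\,\|h\|_H\int_0^\infty \sigma^{-1/2}\eta^{(s)}_t(\sigma)\,d\sigma.
\]
Using the self-similarity $\eta^{(s)}_t(\sigma)=t^{-1/s}\eta^{(s)}(t^{-1/s}\sigma)$ and substituting $\tau=t^{-1/s}\sigma$, the right-hand side equals $\sqrt{2/\pi}\,\|h\|_H\,t^{-1/(2s)}\int_0^\infty \tau^{-1/2}\eta^{(s)}(\tau)\,d\tau$. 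This yields the desired bound with $\theta=1/(2s)$ and $\omega=0$, provided the integral $I_s:=\int_0^\infty\tau^{-1/2}\eta^{(s)}(\tau)\,d\tau$ is finite.

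The only non-routine step is establishing $I_s<\infty$. The cleanest way is to use $\tau^{-1/2}=\Gamma(1/2)^{-1}\int_0^\infty \lambda^{-1/2}e^{-\lambda\tau}\,d\lambda$, interchange the order of integration, and exploit the defining Laplace transform $\int_0^\infty e^{-\lambda\tau}\eta^{(s)}(\tau)\,d\tau=e^{-\lambda^s}$. A substitution $\mu=\lambda^s$ then yields the explicit value $I_s=\Gamma(1/(2s))/(s\sqrt\pi)$, which is finite for every $s\in(0,1)$. The main potential pitfall is only ensuring the Fubini steps are justified, which follows from positivity of $\eta^{(s)}$ and the nonnegativity of the integrands. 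This completes the verification of Hypothesis \ref{Hyp2} with the stated constants.
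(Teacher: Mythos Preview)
Your argument is correct, and it takes a genuinely different route from the paper's proof. The paper first identifies the signed measure $\gamma_t$ representing the functional $f\mapsto\int_X\partial f/\partial h\,d\nu_t$, and then shows $\gamma_t\ll\nu_t$ via an explicit Hahn decomposition $X=X^+\cup X^-$ built from the linearity (on a full-measure subspace) of a good version of $\beta^\mu_h$; this is a Gaussian-specific argument. You instead construct the density directly as a conditional expectation with respect to the $X$-marginal of the joint measure $m_t$ on $(0,\infty)\times X$, and then invoke the integration-by-parts characterization of Fomin differentiability. This is essentially the same device the paper uses in Section~\ref{sect:NLOUO} for convolutions, transplanted here to mixtures; it has the advantage of not relying on any special structure of the Gaussian Fomin derivative and would work for mixtures of more general Fomin-differentiable measures. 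One minor presentational point: the conditional expectation $\mathbb E_{m_t}[G\,|\,x]$ is only well defined once you know $G\in L^1(m_t)$, i.e.\ once $I_s<\infty$ is established, so logically that computation should precede the definition of $\beta^{\nu_t}_h$. Finally, the paper obtains $I_s<\infty$ more cheaply, simply from $\eta^{(s)}\in L^\infty(0,\infty)\cap L^1(0,\infty)$ (boundedness handles the singularity of $\tau^{-1/2}$ at $0$, integrability handles the tail); your Laplace-transform computation is not needed for the proof but does give the explicit constant $I_s=\Gamma(1/(2s))/(s\sqrt\pi)$.
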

\begin{proof}
We have to check that $\nu_t$ is Fomin differentiable along every $h\in H$, and that there exists $C>0$ such that 
\begin{equation}
\label{Hyp2nu_t}
\|\beta^{\nu_t}_h\|_{L^1(X, \nu_t)} \leq \frac{C}{t^{1/(2s)}}, \quad t>0, \; h\in H. 
\end{equation}
Setting as before  $\mu_t :=  \mathcal N (0, t Q) = \mu \circ (t^{1/2}I)^{-1}$, $\mu:= \mathcal N (0, Q)$ we get from Lemma \ref{Le:rescaled}
$$\beta^{\mu_t}_h(y) = \frac{1}{t^{1/2}} \beta^{\mu}_h\left( \frac{y}{t^{1/2}} \right), \quad t>0, \; h\in H, \;y\in X. $$
Consequently, we get
$$\begin{array}{l}
\ds \int_X \frac{\partial f}{\partial h}(y)\nu_t(dy) = \ds \int_X  \frac{\partial f}{\partial h}(y)\int_0^{\infty} \mathcal N (0, \sigma Q)(dy)\eta^{(s)}_t(\sigma)d\sigma
=  \int_0^{\infty} \int_X  \frac{\partial f}{\partial h}(y) \mathcal N (0, \sigma Q)(dy)\eta^{(s)}_t(\sigma)d\sigma
\\
\\
= \ds \int_0^{\infty} \int_X f(y)  \frac{1}{\sigma^{1/2}}  \beta^{\mu}_h\left( \frac{y}{\sigma^{1/2}} \right) \mu_{\sigma}(dy)\eta^{(s)}_t(\sigma) d\sigma 
  = \int_X f(y) \gamma_t(dy)
\end{array}$$
%
%
where the measures $\gamma_t$ are defined by 
$$\begin{array}{lll}
 \gamma_t(A) &  :=  & \ds \int_0^{\infty}  \frac{1}{\sigma^{1/2}}  \left( \int_A  \beta^{\mu}_h\left( \frac{y}{\sigma^{1/2}} \right)  \mu_{\sigma}(dy)\right) \eta^{(s)}_t(\sigma) d\sigma, 
 \\
 \\
 &=& \ds  \int_0^{\infty}  \frac{1}{\sigma^{1/2}}  \left( \int_{A/\sigma^{1/2}}  \beta^{\mu}_h(z) \mu (dz)\right) \eta^{(s)}_t(\sigma) d\sigma ,
 \quad A\in \mathcal B(X). 
 \end{array}$$

Now we prove that each $ \gamma_t$ is absolutely continuous with respect to $\nu_t$. This will be done showing that the positive and negative parts of $\gamma_t$ are respectively given by 
\begin{equation}
\label{gamma_t^+}
 \gamma_t^+(A) =  \int_0^{\infty}  \frac{1}{\sigma^{1/2}}  \left( \int_{A/\sigma^{1/2}} ( \beta^{\mu}_h)^+(z)  \mu (dz)\right) \eta^{(s)}_t(\sigma) d\sigma, \quad A\in \mathcal B(X),
 \end{equation}
\begin{equation}
\label{gamma_t^-}\gamma_t^-(A) =  \int_0^{\infty}  \frac{1}{\sigma^{1/2}}  \left( \int_{A/\sigma^{1/2}} ( \beta^{\mu}_h)^-(z)  \mu (dz)\right) \eta^{(s)}_t(\sigma) d\sigma, \quad A\in \mathcal B(X). 
 \end{equation}
Such representations yield that both $ \gamma_t^+$ and $ \gamma_t^-$ are absolutely continuous with respect to $\nu_t$, because for every  $\nu_t$-negligible $A$ we have by definition $ \int_0^{\infty}  \mu (A/\sigma^{1/2})   \eta^{(s)}_t(\sigma) d\sigma =0$, and since $  \eta^{(s)}_t(\sigma) >0$ for every $\sigma >0$
we get $\mu (A/\sigma^{1/2})  =0$ for a.e. $\sigma >0$ and therefore $ \gamma_t^+(A)=  \gamma_t^-(A) =0$. 

By \cite[Sect. 2.10]{Boga} there exists a $\mu$-version $f_0$ of $\beta_h^{\mu}$ which is linear on a full measure subspace of $X$. We set
$$X^+:= \{x\in X:\; f_0(x)\geq 0\}, \quad X^-:= \{x\in X:\; f_0(x)< 0\}, $$
and we check that $X = X^+ \cup X^-$ is a Hahn decomposition of $(X, \gamma_t)$, namely $  X^+ \cap X^- =\emptyset$ (which is obvious) and 
$$\gamma_t(A\cap X^+ ) \geq 0,\; \gamma_t(A\cap X^-) \leq 0, \quad A\in \mathcal B(X). $$
Indeed, for every $ A\in \mathcal B(X)$ we have
$$\begin{array}{lll}
\gamma_t(A\cap X^+ ) & = & \ds  \int_0^{\infty}  \frac{1}{\sigma^{1/2}}  \left( \int_{(A\cap X^+)/\sigma^{1/2}} (\beta^{\mu}_h)^+(z)  \mu (dz)\right) \eta^{(s)}_t(\sigma) d\sigma
\\
\\
&=& \ds  \int_0^{\infty}  \frac{1}{\sigma^{1/2}}  \left( \int_{(A\cap X^+)/\sigma^{1/2}} f_0(z)  \mu (dz)\right) \eta^{(s)}_t(\sigma) d\sigma .
\end{array}$$
Since $f_0$ is linear on a $\mu$-full measure subspace, then for every $\sigma >0$ the sets $X^+/\sigma^{1/2}$ and $X^+$ may differ only by a $\mu$-negligible set. Therefore, $ f_0(z)\geq 0$ for $\mu$-a.e. $z\in X^+/\sigma^{1/2}$, so that $\int_{(A\cap X^+)/\sigma^{1/2} } f_0(z)  \mu (dz)\geq 0$ and therefore $\gamma_t(A\cap X^+)\geq 0$. The same argument yields $\gamma_t(A\cap X^-)\leq 0$, and \eqref{gamma_t^+}, \eqref{gamma_t^-} follow. 

Therefore, $\gamma_t$ is absolutely continuous with respect to $\nu_t$ and its density is the Fomin derivative $\beta^{\nu_t}_h$ of $\nu_t$ along $h$. Let us estimate its $L^1(X, \nu_t)$ norm. We have 
$$\|\beta^{\nu_t}_h\|_{L^1(X, \nu_t)} = \sup \left\{ \frac{1}{ \|f\|_{\infty}} \int_X f(y) \beta^{\nu_t}_h(y) \nu_t(dy), \; f\in L^{\infty}(X, \nu_t) \setminus\{0\} \right\}, $$
and for every $f\in L^{\infty}(X, \nu_t)$ we have
$$\begin{array}{l}
\ds \int_X f(y)  \beta^{\nu_t}_h(y) \nu_t(dy) = \int_Xf(y)\gamma_t(dy) =  \int_Xf(y) \int_0^{\infty}  \frac{1}{\sigma^{1/2}}   \beta^{\mu}_h\left( \frac{y}{\sigma^{1/2}}   \right)\mu_{\sigma}(dy) \eta^{(s)}_t(\sigma) d\sigma
\\
\\
= \ds \int_0^{\infty} \left(  \frac{1}{\sigma^{1/2}}   \int_Xf(\sigma^{1/2}x) \beta^{\mu}_h(x) \mu(dx)\right) \eta^{(s)}_t(\sigma)d\sigma
\\
\\
 \leq   \ds \int_0^{\infty} \frac{1}{\sigma^{1/2}} \|f\|_{\infty} \|\beta^{\mu}_h\|_{L^1(X, \mu)}  \eta^{(s)}_t(\sigma)d\sigma
 \leq   \|f\|_{\infty}\|h\|_H \int_0^{\infty}  \frac{1}{\sigma^{1/2}}   \eta^{(s)}_t(\sigma)d\sigma, 
\end{array}$$
where
$$\int_0^{\infty}  \frac{1}{\sigma^{1/2}}   \eta^{(s)}_t(\sigma)d\sigma
 =   \int_0^{\infty}  \frac{1}{\sigma^{1/2}}  \eta^{(s)} \left( \frac{\sigma}{t^{1/s}} \right)d\sigma =    \frac{1}{t^{1/2s}} \int_0^{\infty}  \frac{\eta^{(s)}(\tau)}{\tau^{1/2}} d\tau. $$
 Therefore, \eqref{Hyp2nu_t} follows with $C= \int_0^{\infty}  \eta(\tau) \tau^{-1/2} d\tau$. 
  \end{proof}

Thanks to  Lemma \ref{Le:Gross_fraz} and Proposition \ref{Pr:Gross} we can apply
Theorems \ref{Th:Schauder_ell} and \ref{Zygmund_ell}, that give

\begin{Theorem}
\label{Thm:Grossfraz_ell}
Let $f\in C_b(X)$ and $\lambda >0$, $s\in (0,1 )\setminus \{1/2 \}$. Then the equation
\begin{equation}
\label{eq:Grossfraz}
\lambda u +(-L)^s u = f
\end{equation}
has a unique solution $u\in C^{2s}_H(X)$, and there is $C>0$, independent of $f$, such that 
$$\|u\|_{C^{2s}_H(X)} \leq C\|f\|_{\infty}. $$
If $s=1/2$, equation \eqref{eq:Grossfraz} has a unique solution in $Z^1_H(X)$,  and there is $C>0$, independent of $f$, such that 
$$\|u\|_{Z^1_H(X)} \leq C\|f\|_{\infty}. $$
If in addition $f\in C^{\alpha}_H(X)$ with $\alpha \in (0, 1)$ and $\alpha + 2s \notin \{1, 2\}$, then $u\in C^{\alpha + 2s}_H(X)$  and there is $C>0$, independent of $f$, such that 
$$\|u\|_{C^{\alpha + 2s}_H(X)} \leq C\|f\|_{ C^{\alpha}_H(X)}. $$
If $\alpha + 2s = k\in \{1, 2\}$, then $u\in Z^{k}_H(X)$  and there is $C>0$, independent of $f$, such that 
$$\|u\|_{Z^{k}_H(X)} \leq C\|f\|_{ C^{\alpha}_H(X)}. $$
\end{Theorem}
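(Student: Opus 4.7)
The plan is to identify Theorem \ref{Thm:Grossfraz_ell} as a direct specialization of Theorems \ref{Th:Schauder_ell} and \ref{Zygmund_ell} applied to the subordinated semigroup $S_t$ constructed just before the statement. Almost all of the verification work has already been done in Lemma \ref{Le:Gross_fraz} and Proposition \ref{Pr:Gross}, so the proof amounts to matching parameters and reading off the four cases.

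First I would observe that $S_t$ has the generalized Mehler form \eqref{P_t} with $T_t = I$ and the probability measures $\nu_t$ from \eqref{nu_t}. By Lemma \ref{Le:Gross_fraz}, $t \mapsto \nu_t$ is weakly continuous on $[0,+\infty)$, so the general framework of Section \ref{sect:schauzyg} applies to $S_t$; its generator in the resolvent sense of \eqref{L} is the operator $-(-L)^s$ determined by the Kato formula \eqref{Kato}. Hence for every $\lambda > 0$ and $f \in C_b(X)$ the function
\begin{equation*}
u := \int_0^\infty e^{-\lambda t} S_t f \, dt
\end{equation*}
is the unique element of $D((-L)^s)$ solving \eqref{eq:Grossfraz}, and $\|u\|_\infty \leq \|f\|_\infty/\lambda$.

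Next I would invoke Proposition \ref{Pr:Gross}, which shows that Hypotheses \ref{Hyp1} and \ref{Hyp2} are satisfied by $\nu_t$ and $T_t = I$ with $H = Q^{1/2}(X)$, $\omega = 0$, and $\theta = 1/(2s)$. The identity $1/\theta = 2s$ then translates the four cases of the theorem into the four cases of the abstract regularity results: when $s \neq 1/2$ and $f \in C_b(X)$, statement (i) of Theorem \ref{Th:Schauder_ell} gives $u \in C^{2s}_H(X)$; when $s = 1/2$ and $f \in C_b(X)$, statement (i) of Theorem \ref{Zygmund_ell} with $k = 1$ gives $u \in Z^1_H(X)$; when $f \in C^\alpha_H(X)$ with $\alpha + 2s \notin \{1,2\}$, statement (ii) of Theorem \ref{Th:Schauder_ell} gives $u \in C^{\alpha + 2s}_H(X)$; and when $\alpha + 2s = k \in \{1,2\}$, statement (ii) of Theorem \ref{Zygmund_ell} gives $u \in Z^k_H(X)$. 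In each case the accompanying norm estimate is exactly the one asserted in the theorem.

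There is no genuinely hard step left in this argument; the technical content has been absorbed into Proposition \ref{Pr:Gross} (Fomin differentiability of the mixture $\nu_t$ along directions in $H = Q^{1/2}(X)$ together with the scaling bound $\|\beta^{\nu_t}_h\|_{L^1(X,\nu_t)} \leq C t^{-1/(2s)} \|h\|_H$) and into Lemma \ref{Le:Gross_fraz} (weak continuity of $\nu_t$ and identification of the generator via the Kato formula). The only point meriting a line of commentary is the last one: since $P_t$, and hence $S_t$, is not strongly continuous on $C_b(X)$, the operator $-(-L)^s$ is defined through its resolvent as in \eqref{L}, which agrees with the standard Kato power on the part of $L$ in $BUC(X)$, as noted immediately after Lemma \ref{Le:Gross_fraz}. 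With this convention, the theorem follows at once from the abstract results.
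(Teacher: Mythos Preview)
Your proposal is correct and matches the paper's own argument exactly: the paper simply states that, thanks to Lemma~\ref{Le:Gross_fraz} and Proposition~\ref{Pr:Gross}, Theorems~\ref{Th:Schauder_ell} and~\ref{Zygmund_ell} apply, and Theorem~\ref{Thm:Grossfraz_ell} is the resulting statement. Your only addition is to spell out explicitly how the four cases correspond under $1/\theta = 2s$, which is a welcome clarification.
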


Applying   Theorems \ref{Th:Schauder_par} and \ref{Zygmund_par} we obtain 
 
 \begin{Theorem}
\label{Thm:Grossfraz_par}
Let $s\in (0, 1)$, $\alpha \in [0, 1)$ be such that $\alpha + 2s\notin \{1, 2\}$, and let 
$f\in C^{\alpha + 2s}_H(X)$, $g\in C^{0, \alpha}_H([0,T]\times X)$ \footnote{For $\alpha =0$ we mean $C^{0,0}_H([0,T]\times X) = C_b([0,T]\times X) $.}. The mild solution to
\begin{equation}
\label{eq:fraz_parGross}
\left\{\begin{array}{l}
v_t (t,x) + (-L)^s v (t, \cdot)(x) +  g(t,x), \quad 0\leq t\leq T, \; x\in X, 
 \\
 \\
v(0, x) = f(x), \quad x\in X, 
 \end{array}\right. 
\end{equation}
belongs to $C^{0, \alpha+2s}_H([0,T]\times X)$, and there is $C>0$, independent of $f$ and $g$, such that 
$$\|v\|_{C^{0, \alpha+2s}_H([0,T]\times X)} \leq C(\|f\|_{C^{\alpha + 2s}_H(X)} + \|g\|_{C^{0, \alpha}_H([0,T]\times X)}). $$
Let  $s\in (0, 1)$, $\alpha \in [0, 1)$ be such that $\alpha + 2s =: k \in \{1, 2\}$. Then for every $f\in Z^k_H(X)$, $g\in C^{0, \alpha}_H([0,T]\times X)$
the mild solution to \eqref{eq:fraz_parGross} belongs to $Z^{0, k}_H([0,T]\times X)$,  and there is $C>0$, independent of $f$, such that 
$$\|v\|_{Z^{0,k}_H(X)} \leq C(\|f\|_{Z^{k}_H(X)} + \|g\|_{C^{0, \alpha}_H([0,T]\times X)}). $$
\end{Theorem}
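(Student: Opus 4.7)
The plan is to reduce everything to the abstract evolution results already proved, namely Theorems \ref{Th:Schauder_par} and \ref{Zygmund_par}, applied to the generator $-(-L)^s$ of the subordinated semigroup $S_t$. The preparatory work has essentially been done: Lemma \ref{Le:Gross_fraz} identifies $S_t$ as a generalized Mehler semigroup of the form \eqref{P_t} with $T_t = I$ and transition measures $\nu_t$ given by \eqref{nu_t}, and establishes the weak continuity of $t \mapsto \nu_t$; Proposition \ref{Pr:Gross} verifies Hypothesis \ref{Hyp2} with $H = Q^{1/2}(X)$, $\omega = 0$, and $\theta = 1/(2s)$. Hypothesis \ref{Hyp1} is immediate from the Fomin differentiability statements proven in Proposition \ref{Pr:Gross}.

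With these inputs in hand, I would translate the indices. Since $\theta = 1/(2s)$, the critical threshold $\alpha + 1/\theta$ becomes $\alpha + 2s$. Given $\alpha \in [0,1)$ and $s \in (0,1)$, the sum $\alpha + 2s$ lies in $(0,3)$, so the condition $\alpha + 1/\theta \notin \N$ translates exactly into $\alpha + 2s \notin \{1,2\}$, while $\alpha + 1/\theta = k \in \N$ becomes $\alpha + 2s = k \in \{1,2\}$. For the first assertion, when $\alpha + 2s \notin \{1,2\}$, I would invoke statement (i) of Theorem \ref{Th:Schauder_par} in the case $\alpha = 0$ (with $f \in C^{2s}_H(X)$ and $g \in C_b([0,T] \times X)$), and statement (ii) in the case $\alpha \in (0,1)$ (with $f \in C^{\alpha + 2s}_H(X)$ and $g \in C^{0,\alpha}_H([0,T] \times X)$). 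Both conclusions give the desired membership of the mild solution $v$ in $C^{0,\alpha+2s}_H([0,T] \times X)$ together with the norm bound.

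For the second assertion, when $\alpha + 2s = k \in \{1,2\}$, I would similarly apply statement (i) of Theorem \ref{Zygmund_par} when $\alpha = 0$ (so $2s = k \in \N$, $f \in Z^k_H(X)$, $g \in C_b$), and statement (ii) when $\alpha \in (0,1)$ ($f \in Z^k_H(X)$, $g \in C^{0,\alpha}_H$). Each produces $v \in Z^{0,k}_H([0,T] \times X)$ with the claimed estimate.

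Since the technical bulk (smoothing estimates for $S_t$, interpolation-type arguments, splitting into integrals near and away from $t=0$) is entirely absorbed into the abstract Theorems \ref{Th:Schauder_par} and \ref{Zygmund_par}, the only step that might be viewed as nontrivial is the bookkeeping required to verify that the hypotheses of these abstract theorems are indeed satisfied for every admissible pair $(\alpha, s)$, in particular ensuring that the semigroup $S_t$ is strongly continuous in $x$-regularity classes, a point which follows transparently from the representation of $S_t$ as a Bochner integral of $P_\sigma$ against $\eta^{(s)}_t(\sigma)\,d\sigma$ combined with the known properties of $P_t$. No new estimates are required.
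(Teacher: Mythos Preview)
Your proposal is correct and matches the paper's approach exactly: the paper simply states that Theorem \ref{Thm:Grossfraz_par} follows by applying Theorems \ref{Th:Schauder_par} and \ref{Zygmund_par}, relying on Lemma \ref{Le:Gross_fraz} and Proposition \ref{Pr:Gross} to verify the abstract hypotheses with $\theta = 1/(2s)$. Your index translation and case analysis are precisely what is needed, and no additional argument is required.
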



\subsection{ Non-Gaussian classical Ornstein-Uhlenbeck semigroups }
\label{sub:nonGauss}

In this section, as announced earlier, we come back to \eqref{eq:ousemi}, more precisely to its non-Gaussian analogue considered in \cite[Sect. 7]{FR}, for which the semigroup $P_t$ is given by 
\begin{equation}\label{eq:nGsemi}
P_tf(x) = \int_X f(e^{-t}x + (1-e^{-pt})^{1/p}y)\mu (dy), \quad t>0, \; f\in C_b(X),\; x\in X,
\end{equation}
where $\mu$ is a suitable Borel probability measure in a Hilbert space $X$. $P_t$ may be written in the form \eqref{P_t}, with $T_t = e^{-t}I$ and 
\begin{equation}
\label{rescaled}
\mu_t = \mu\circ [(1-e^{-pt})^{1/p}I]^{-1}, \quad t>0. 
\end{equation}
If $\mu$ is a centered Gaussian measure and $p=2$, $P_t$ is the classical Ornstein-Uhlenbeck semigroup considered before in \eqref{eq:ousemi}. 
For $P_t$  to  be a semigroup, $\mu$ cannot be any Borel measure: indeed,  we need that condition \eqref{semigruppo} is satisfied. It is satisfied provided 
$$\hat{\mu}(a) = e^{-\lambda(a)/p}, \quad a \in X^*, $$
and $\lambda :X^*\mapsto \C$ is a negative definite function, which is Sazonov continuous, and such that 
$$ \lambda( ta) = t^p \lambda(a), \quad a\in X^*, \; t>0. $$
 The weak continuity of $t\mapsto \mu_t$ follows immediately from the equality $\int_X f(y)\mu_t(dy) = \int_X  f( (1-e^{-pt})^{1/p}y)\mu (dy)$, for   every $f\in C_b(X)$ and $t\geq 0$. 

We fix now a Banach space $H\subset X$ such that $\mu$ is Fomin differentiable along every $h\in H$. ($H$ may be the whole space $D(\mu)$ of all $h\in X$ such that $\mu$ is Fomin differentiable along  $h$, or a smaller space continuously embedded in $D(\mu)$). 
  In the case where $X$ is e.g. a separable real Hilbert space, an easy example for such a  probability measure $\mu$ with $D(\mu) \supset Q^\frac{1}{2}(X)$ is the measure $\nu_t$ defined in \eqref{nu_t} with $t=\frac{1}{p}$ and $s=\frac{p}{2}$ (recall that $\nu_t$ in \eqref{nu_t} also depends on $s$); in this case it is easy to check that  $\lambda (a) = \langle Qa, a\rangle^{p/2}/2$   and it is convenient to take $H= Q^\frac{1}{2}(X)$  .

   Going back to the general case  , $T_t= e^{-t}I$ maps obviously $H$ into itself. 
Moreover, by Lemma \ref{Le:rescaled}   $\mu_t$ is Fomin differentiable along every $ h\in H$ and we have
$\|\beta^{\mu_t}_h\|_{L^1(X, \mu_t)}  = (1-e^{-pt})^{-1/p}\|\beta^{\mu}_h\|_{L^1(X, \mu)}$.  Therefore, for every $t>0$ and $ h\in D(\mu)$ we have
$$\|\beta^{\mu}_{T_th}\|_{L^1(X, \mu_t)} =  \frac{e^{-t}}{(1-e^{-pt})^{1/p}} \|\beta^{\mu}_h\|_{L^1(X, \mu)} =  \frac{e^{-t}}{(1-e^{-pt})^{1/p}}\|h\|_{D(\mu)} \leq C\frac{e^{-t}}{t^{1/p}} \|h\|_{D(\mu)}, $$
with $C= \sup_{t>0} t^{1/p}(1-e^{-pt})^{-1/p}$. Since $H$   is continuously embedded in    $D(\mu)$, Hypothesis   \ref{Hyp2} is satisfied with $\omega = -1$, $\theta = 1/p$ and our approach applies. Hence   Theorems \ref{Th:Schauder_ell} and \ref{Zygmund_ell} hold for the generator $L$ of the semigroup in $\eqref{eq:nGsemi}$, with $\theta= 1/p$, as well as Theorems \ref{Th:Schauder_par} and \ref{Zygmund_par}.\bigskip
 

 \textbf{Acknowledgements} This work was supported by  DFG through CRC 1283, and by MIUR through the research project PRIN 2015233N54.  The authors are grateful to Jan Van Neerven for discussions about nonsymmetric Ornstein-Uhlenbeck semigroups.


\end{document}